\newtheorem{thm}{Theorem}
\newtheorem{cor}[thm]{Corollary}
\newtheorem{lem}[thm]{Lemma}
\newtheorem{prop}[thm]{Proposition}
\newtheorem*{Q}{Question}
\newtheorem*{claim*}{Claim}
\theoremstyle{definition}
\newtheorem{definition}[thm]{Definition}
\newtheorem*{definition*}{Definition}
\newtheorem*{note*}{Note}
\newtheorem*{example*}{Example}
\theoremstyle{remark}
\newtheorem{rem}[thm]{Remark}
\numberwithin{equation}{section}
\renewcommand{\epsilon}{\varepsilon}
\renewcommand{\phi}{\varphi}
\newcommand{\hpol}{h_{\textnormal{pol}}}
\newcommand{\htop}{{h_{\textnormal{top}}}}
\DeclareMathOperator{\sep}{sep}
\newcommand{\spanning}{\mathop{\rm span}}
\DeclareMathOperator{\Fix}{Fix}
\DeclareMathOperator{\Per}{Per}	
\DeclareMathOperator{\Type}{Type}
\DeclareMathOperator{\Int}{Int}
\renewcommand{\a}{\underline{a}}
\newcommand{\G}{\mathcal{G}}
\newcommand{\Nzero}{{\mathbb{N}_0}}
\newcommand{\N}{\mathbb{N}}
\newcommand{\Lg}{\mathscr{L}}
\newcommand{\conv}{\mathop{\rm conv}}
\newcommand{\NDW}{\mathop{\rm NDW}}
\newcommand{\I}{\mathcal{I}}
\newcommand{\U}{\mathcal{U}}
\DeclareMathOperator{\Orb}{Orb}
\DeclareMathOperator{\source}{source}
\def\l@section{\@tocline{1}{0pt}{1pc}{}{}}
\def\l@subsection{\@tocline{2}{0pt}{1pc}{4.6em}{}}
\def\l@subsubsection{\@tocline{3}{0pt}{1pc}{7.6em}{}}
\renewcommand{\tocsection}[3]{%
	\indentlabel{\@ifnotempty{#2}{\makebox[2.3em][l]{%
				\ignorespaces#1 #2.\hfill}}}#3}
\renewcommand{\tocsubsection}[3]{%
	\indentlabel{\@ifnotempty{#2}{\hspace*{2.3em}\makebox[2.3em][l]{%
				\ignorespaces#1 #2.\hfill}}}#3}
\renewcommand{\tocsubsubsection}[3]{%
	\indentlabel{\@ifnotempty{#2}{\hspace*{4.6em}\makebox[3em][l]{%
				\ignorespaces#1 #2.\hfill}}}#3}
\begin{document}

\title{Rigidity and Flexibility of Polynomial Entropy}


\author{Samuel Roth}

\address{Mathematical Institute, Silesian University in Opava, Na Rybn\'\i\v{c}ku 1\\
74601 Opava, Czech Republic\\
samuel.roth@math.slu.cz}

\author{Zuzana Roth}

\address{Mathematical Institute, Silesian University in Opava, Na Rybn\'\i\v{c}ku 1\\
74601 Opava, Czech Republic\\
zuzana.roth@math.slu.cz}

\author{\v{L}ubom\'\i r Snoha}

\address{Faculty of Natural Sciences, Matej Bel University, Tajovsk\'eho 40\\
97401 Bansk\'a Bystrica, Slovakia\\
lubomir.snoha@umb.sk}



\subjclass[2020]{Primary 37B40, 37E05; Secondary 37B45, 37E99}

\keywords{Polynomial entropy, one-way horseshoe, interval map, Sharkovskii type, logistic map, continuum, dendrite, zero topological entropy}

\maketitle

{\centering\footnotesize To Emily. All good things start small.\par}

\begin{abstract}
We introduce the notion of a one-way horseshoe and show that the polynomial entropy of an interval map is given by one-way horseshoes of iterates of the map, obtaining in such a way an analogue of Misiurewicz's theorem on topological entropy and standard `two-way' horseshoes. Moreover, if the map is of Sharkovskii type 1 then its polynomial entropy can also be computed by what we call chains of essential intervals. As a consequence we get a rigidity result that if the polynomial entropy of an interval map is finite, then it is an integer. We also describe the possible values of polynomial entropy of maps of all Sharkovskii types. As an application we compute the polynomial entropy of all maps in the logistic family. On the other hand, we show that in the class of all continua the polynomial entropy of continuous maps is very flexible. For every value $\alpha \in [0,\infty]$ there is a homeomorphism on a continuum with polynomial entropy $\alpha$. We discuss also possible values of the polynomial entropy of continuous maps on dendrites.
\end{abstract}

\tableofcontents


\section{Introduction}\label{S:intro}

In the present paper we are interested in flexibility and rigidity aspects of polynomial entropy. \emph{Flexibility} in general means that for a given class of dynamical systems a considered dynamical invariant (for instance some kind of entropy) can take arbitrary values, subject only to natural restrictions. Flexibility as a program in dynamics was recently formulated by A. Katok~\cite{EK19}. For some recent flexibility results see also for instance~\cite{BKRH} and \cite{AMP}; some older results in this direction, however not using the terminology of flexibility, will be mentioned below. \emph{Rigidity} in the present paper means that a considered dynamical invariant can take only very restricted values for a given class of systems.\footnote{In~\cite[Subsection 1.7.6]{BKRH}, the phenomenon of rigidity is understood in a slightly different meaning.} One may compare this with the notion of a \emph{rigid space} -- in topology a space is called rigid if it admits only trivial continuous selfmaps, i.e.\ the identity and the constant maps.  

By a \emph{(topological) dynamical system} we mean a pair $(X,f)$, where $X$ is a nonempty compact metric space and
$f\colon X\to X$ is a continuous map. A \emph{(metric) continuum} is a nonempty compact connected metric space.

As an analogue of measure theoretical entropy, Adler, Konheim and McAndrew \cite{AKM65} introduced in any topological dynamical system $(X,f)$ the concept of \emph{topological entropy} $\htop (f)$. Recall that, due to Bowen~\cite{Bow71} and Dinaburg~\cite{Din70, Din71},
\[
\htop (f) = \lim_{\varepsilon \to 0} \limsup_{n\to \infty} \frac{\log \sep (n, \varepsilon, f)}{n}
\] 
where $\sep (n, \varepsilon, f)$ is  the  maximal cardinality of subsets of $X$ which are $(n,\varepsilon)$-separated for $f$. It is well known that topological entropy is flexible even on the interval, since a continuous interval map may have any topological entropy in $[0,\infty]$, see~\cite{ALM00} or~\cite{BC99}. Other flexibility results are that a subshift over $k$ symbols may have any topological entropy in $[0, \log k]$, see~\cite{G73}, and that for every $\alpha \in [0, \infty)$ there exists a Toeplitz subshift over finitely many symbols with topological entropy $\alpha$, see~\cite{Wil84} or \cite[Theorem 4.77]{Kur03}. The general question whether for every set $\{0\} \subseteq A \subseteq [0,\infty]$ which is closed with respect to multiples (we need this condition because $\htop(f^n) = n\htop (f)$) there exists a compact metric space $X$ such that the set $\{\htop(f) \colon f \text{ is a continuous map } X\to X\}$ coincides with the set $A$, has not been answered yet, see~\cite[Question 9.5]{SYZ20}.  Let us also mention that the obvious fact that all continuous selfmaps of rigid spaces have zero topological entropy can be viewed as a rigidity result.

Systems with zero topological entropy may still exhibit complicated behaviors. Therefore there has been a search for topological invariants that measure the complexity of dynamical systems in the zero entropy regime. Here we mention two ways how to distinguish between zero entropy systems --- topological sequence entropy and polynomial entropy (there are other ways, say one can consider various other kinds of \emph{`slow entropies'} in a broad sense, see \cite{KatTho}, \cite{HK02}, \cite{GJ16}, \cite{FGJ16} and references therein, or one can consider the \emph{entropy dimension}~\cite{deC97} or \emph{topological complexity} of the system~\cite{BHM00}; for completeness let us also mention that to distinguish between systems with infinite topological entropy one can use the notion of \emph{mean topological dimension}~\cite{LW00}). 

Thus, to distinguish between systems with zero topological entropy one can for instance use the concept of \emph{topological sequence entropy}, i.e.\ topological entropy with respect to a strictly increasing subsequence of the sequence $0,1,2, \dots$, see \cite{Good74}; there is also a survey~\cite{Can08}. Rather than looking at the topological sequence entropy with respect to a given sequence, people are more interested in the supremum of the topological sequence entropies of $(X,f)$ taken over all strictly increasing subsequences of $0,1,2,\dots$. This quantity is sometimes called the \emph{supremum topological sequence entropy}; denote it $h^*_{\rm top}(f)$. If $\htop (f)>0$ then $h^*_{\rm top}(f)=\infty$, and hence supremum topological sequence entropy is useful only for systems with zero topological entropy. It is interesting that $h^*_{\rm top}(f)$ only takes values of the form $\log n$, possibly $\log 1=0$ or $\log \infty = \infty$ \cite{HY09}. In some spaces more can be said. For instance on the interval just three values $0, \log 2$ and $\infty$ can be obtained in this way \cite{Can04}; see~\cite{SYZ20} for what is known in some other spaces. These rigidity results are accompanied by a flexibility result from~\cite{SYZ20} saying that for every set $\{0\} \subseteq A \subseteq \{0, \log 2, \log 3, \ldots\}\cup \{\infty\}$ there exists a one-dimensional continuum $X$ such that the set $\{h^*_{\rm top}(f) \colon f \text{ is a continuous map } X\to X\}$ coincides with the set $A$. 

Another way to distinguish between systems with zero topological entropy is to use the concept of \emph{polynomial (topological) entropy}. Marco introduced this invariant in~\cite{Mar09} and coined the term polynomial entropy in \cite{Mar13}\footnote{A measure-theoretic version appeared even earlier as one of the invariants in~\cite{KatTho} under the name slow entropy. Topological versions for subshifts have also been considered, eg.~\cite{Cas97}, \cite{Kur03}.}. By definition, the polynomial entropy of $(X,f)$ is
\[
\hpol (f) = \lim_{\varepsilon \to 0} \limsup_{n\to \infty} \frac{\log \sep (n, \varepsilon, f)}{\log n}~.
\] 
Obviously, $\hpol (f) \geq \htop (f)$. Similarly as with $h^*_{\rm top}(f)$, the invariant $\hpol(f)$ is also infinite whenever $\htop(f)$ is positive. Hence also polynomial entropy is useful only for systems with zero topological entropy.

In the present paper we do not add more dynamical assumptions on maps, such as expansivity, equicontinuity, distality, or any kind of smoothness and the like (for some results on polynomial entropy under additional assumptions on the map see \cite{L13}, \cite{ACM17}, \cite{Mar13}, and a recent paper \cite{CPR21} where a question from~\cite{ACM17} is answered negatively). 

The following facts are known on the possible values of polynomial entropy, in compact metric spaces or in some special kinds of compact metric spaces, for general continuous maps/homeomorphisms.

\begin{itemize}
	\item If topological entropy is zero, polynomial entropy still may be positive, even infinite~\cite[Example 3.25]{Kur03}. 
	\item Polynomial entropy on the Cantor set is very flexible; for instance for every $\alpha\in [1,\infty]$ there is a Toeplitz subshift over finitely many symbols whose polynomial entropy is $\alpha$~\cite[Proposition 4.79]{Kur03}.
	\item There is a homeomorphism on an appropriate countable union of pairwise disjoint circles with non-integer polynomial entropy~\cite[Proposition 3.5]{ACM17}.
	\item The set of values of the polynomial entropies of homeomorphisms of compact metric spaces is dense in  $(0,\infty)$~\cite[Remark 3.6]{ACM17}.
\end{itemize}

Moreover, in \cite{ACM17} the authors asked the following questions.

\begin{itemize}
	\item (Problem 3 in \cite{ACM17}) Is every positive real number the polynomial entropy of a
	homeomorphism of a compact metric space?
	\item (Problem 4 in \cite{ACM17}) If $f$ is a homeomorphism of a connected compact metric
	space with $\hpol(f)$ finite, is it true that $\hpol(f)$ is an integer number?
\end{itemize} 

In\cite{HL19} the authors, apparently not aware of~\cite{ACM17}, proved the following.

\begin{itemize}
	\item For Brouwer homeomorphisms (regarded as orientation-preserving homeomorphisms on $\mathbb{S}^2$ with a unique fixed point at $\infty$) the polynomial entropy takes arbitrary values in $\{1\}\cup[2,\infty]$,~\cite[Th\'eor\`eme 1.1]{HL19}.
\end{itemize} 

\noindent This already gives a negative answer to the above-mentioned Problem 4 and a \emph{partial} positive answer to Problem 3.  

\medskip

In spite of all the above results, some challenging questions related to flexibility and rigidity aspects of polynomial entropy still remained open. Does Problem~3 have a \emph{complete} positive answer, i.e.\ do there exist also homeomorphisms on compact metric spaces with any polynomial entropy in $[0,1) \cup (1,2)$? Furthermore, once we know that the sphere $\mathbb S^2$ is very flexible with respect to polynomial entropy of homeomorphisms (in particular, it may take non-integer values), one can ask whether there is another continuum which is important in dynamics and is, contrary to the sphere, rigid with respect to polynomial entropy. If we have only homeomorphisms in mind, then it is not difficult to show that the interval admits only homeomorphisms with polynomial entropy 0 or 1, see Corollary~\ref{C:monot}. However, on the interval only noninvertible dynamics is really interesting. Therefore the question is whether the interval exhibits some kind of rigidity with respect to polynomial entropy of \emph{continuous maps}. Is it true that the polynomial entropy of a continuous interval map is always an integer, provided it is finite? This would mean that Problem 4 has a positive answer in the case of the interval. However, even if we know the answer, a still more interesting question is how one can compute the polynomial entropy of interval maps in a more effective way than to use the definition. In the present paper we answer these questions. 

\medskip

{\bf On the rigidity side}, we show that for continuous interval maps the polynomial entropy is integer or infinite. What is more important, this result is a consequence of the theory we develop. 

First, for continuous selfmaps of a compact metric space we introduce the notion of a one-way horseshoe, see Definition~\ref{D:one-way}, and we show that a one-way $\ell$-horseshoe for $f$ (or a positive iterate of $f$) implies that $\hpol(f)\geq \ell$, see Theorem~\ref{T:hor-ent}. 

Using this result, we show in Theorem~\ref{T:type1} that the polynomial entropy of interval maps $f$ of Sharkovskii type 1 is given by one-way horseshoes for iterates of $f$ or, alternatively, by so called chains of essential intervals for $f$, introduced in Definition~\ref{D:chains}. 

Finally, in our main rigidity theorem, see Theorem~\ref{T:rigid}, we prove that even for general continuous interval maps the \emph{polynomial} entropy is given by \emph{one-way horseshoes}. This is an analogue of Misiurewicz's theorem saying that for continuous maps of an interval the \emph{topological} entropy is the result of the existence of \emph{horseshoes}, see \cite{Mis79} or \cite[Theorem 4.3.5 and historical remarks on pp. 218 -- 219]{ALM00}.

As a consequence we get that the polynomial entropy of an interval map is zero if and only if its set of periodic points is connected, see Theorem~\ref{T:hpol0}. Finally, our rigidity theorem enables us to describe possible values
of interval maps depending on the Sharkovskii type of the map, see Theorem~\ref{T:values}. In particular, the polynomial entropy of an interval map is integer or infinite (and in the case of homeomorphisms it is 0 or 1, see Corollary~\ref{C:monot}).

As an application of our theory, we compute the polynomial entropy of all maps in the logistic family, see Theorem~\ref{T:logistic}.

\medskip

{\bf On the flexibility side}, in Theorem~\ref{T:flex-C} we show that for homeomorphisms on continua, polynomial entropy can take arbitrary values in $[0,\infty]$. This already gives a complete positive answer to the above-mentioned Problem 3 from \cite{ACM17} and, similarly as in~\cite{HL19}, a strong negative answer to Problem 4. 

Then we turn our attention to a simpler class of continua, dendrites, and show that for continuous dendrite maps the polynomial entropy can take many non-integer values, see Theorem~\ref{T:flex-D} and Remark~\ref{R:dend}.



\section{Terminology and Background Results}\label{S:term}
  \addtocontents{toc}{\protect\setcounter{tocdepth}{1}}%
  \setcounter{tocdepth}{1}%


Let $(X,f)$ be a dynamical system, meaning that $X$ is a nonempty compact metric space with metric $d$ and $f\colon X\to X$ is a continuous map. Sometimes we write $fx$ rather than $f(x)$. By $\mathbb N_0$ we denote the set of nonnegative integers.

\subsection{Polynomial entropy and its basic properties}\label{SS:prop}

Recall that a set $S\subseteq X$ is $(n,\varepsilon,f)$-separated if for any two distinct points $x, y \in S$, $\max_{i=0,\dots,n-1} d(f^i(x), f^i(y)) > \varepsilon$. If $A\subseteq X$ then a set $S\subseteq X$ (not necessarily $S\subseteq A$) is $(n,\varepsilon,f)$-spanning for the set $A$, or $(n,\varepsilon,f)$-spans the set $A$, if for every point $y\in A$ there is a point $x\in S$ such that $\max_{i=0,\dots,n-1} d(f^i(x), f^i(y))\leq \varepsilon$. Let $\sep (n, \varepsilon, f, A)$ be the maximal cardinality of subsets of $A$ which are $(n,\varepsilon,f)$-separated and let $\spanning (n, \varepsilon, f, A)$ be the minimal cardinality of subsets of $X$ which $(n,\varepsilon,f)$-span the set $A$. These quantities are finite because $X$ is compact. Then 
\[
\hpol(f,A):= \lim_{\varepsilon \to 0} \hpol^{\varepsilon}(f,A) 
\] 
where 
\[
\hpol^{\varepsilon}(f,A):= \limsup_{n\to \infty} \frac{\log \sep (n, \varepsilon, f, A)}{\log n} = \limsup_{n\to \infty} \frac{\log \spanning (n, \varepsilon, f, A)}{\log n}~.
\] 
Note that if $0< \varepsilon_1 < \varepsilon_2$ then $\hpol^{\varepsilon_1}(f,A) \geq  \hpol^{\varepsilon_2}(f,A)$, so the limit in the definition of $\hpol(f,A)$ does exist. If $A_1 \subseteq A_2$, then $\hpol(f,A_1) \leq \hpol(f,A_2)$. If $A=X$, we usually write just $\sep (n, \varepsilon, f)$, $\spanning (n, \varepsilon, f)$,  $\hpol^{\varepsilon}(f)$ and finally $\hpol(f)$, the \emph{polynomial entropy} of~$f$. If no misunderstanding can arise, we sometimes write $(n,\varepsilon)$ instead of $(n, \varepsilon, f)$.

We list some properties of the polynomial entropy, see~\cite{Mar13}: 
\begin{enumerate}
	\item [(1)] {\bf Invariance.} $\hpol(f)$ is an invariant of topological conjugacy and does not depend on the choice of topologically equivalent metrics on $X$.
	\item [(2)] {\bf Factors.} If $(Y,g)$ is a factor of $(X,f)$, i.e.\ there is a continuous surjection $\pi\colon X\to Y$ with $\pi \circ f = g \circ \pi$, then $\hpol (g) \leq \hpol (f)$.
	\item [(3)] {\bf Product rule.} If $(X,f)$ and $(Y,g)$ are dynamical systems, then $\hpol (f \times g) = \hpol (f) + \hpol (g)$.
	\item [(4)] {\bf Power rule.} For $n\geq 1$, $\hpol (f^n) = \hpol (f)$ (different from topological entropy!). If $f$ is a homeomorphism, then also $\hpol (f^{-1})=\hpol (f)$. 
	\item [(5)] {\bf Union property.} If $A = \bigcup _{i=1}^m A_i$ then $\hpol (f, A) = \max_{1\leq i \leq m} \hpol (f, A_i)$.
\end{enumerate}

\smallskip

The following lemma is a slight generalization of \cite[Proposition 2.1]{ACM17}.

\begin{lem}\label{L:Artigue}
Let $(X,d)$ be a compact metric space and $f\colon X\to X$ a continuous map. If $A\subseteq X$ contains a non-recurrent point and $f(A)\supseteq A$, then $\hpol(f,A)\geq 1$.	
\end{lem}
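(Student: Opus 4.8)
The plan is to exploit the hypothesis that $f(A)\supseteq A$ together with a non-recurrent point $z\in A$ to construct, along an infinite backward orbit of $z$ inside $A$, a large family of points whose finite orbit segments are pairwise separated at a fixed scale $\varepsilon$. First I would fix a non-recurrent point $z\in A$; by definition there is $\varepsilon_0>0$ such that $d(f^i(z),z)>2\varepsilon_0$ for all $i\geq 1$. Using $f(A)\supseteq A$ repeatedly, I would choose points $z=z_0, z_1, z_2,\dots$ in $A$ with $f(z_k)=z_{k-1}$, so that $f^k(z_k)=z_0=z$ for every $k$. The key separation observation is: for $j>k\geq 0$, the orbit segments of $z_j$ and $z_k$ differ at time $k$, since $f^k(z_j)=z_{j-k}$ while $f^k(z_k)=z$, and $d(z_{j-k},z)=d(f^{\,j-k}(z),z)>2\varepsilon_0>\varepsilon_0$ because $j-k\geq 1$ and $z$ is non-recurrent. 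Hence $\{z_0,z_1,\dots,z_{n-1}\}$ is an $(n,\varepsilon_0,f)$-separated subset of $A$ of cardinality $n$, giving $\sep(n,\varepsilon_0,f,A)\geq n$, so $\hpol^{\varepsilon_0}(f,A)\geq \limsup_n \frac{\log n}{\log n}=1$, and therefore $\hpol(f,A)\geq 1$.

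The one genuinely delicate point is the selection of the backward orbit $(z_k)$ staying inside $A$. The hypothesis $f(A)\supseteq A$ says precisely that every point of $A$ has an $f$-preimage in $A$, so the backward choices can be made one step at a time by a routine recursion (no compactness or closedness of $A$ is needed, just the inclusion); I would state this explicitly but not belabor it. A second, even more minor, point is that the argument as written only needs the separation to occur at \emph{some} coordinate $i\in\{0,\dots,n-1\}$, and coordinate $i=k<n$ works for the pair $(z_j,z_k)$ with $j>k$; one should just make sure all indices $0,1,\dots,n-1$ are legitimate times in an $(n,\varepsilon)$-separated set, which they are.

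I expect the main obstacle to be purely expository rather than mathematical: making clear why the single constant $\varepsilon_0$ (depending only on $z$, not on $n$) suffices, so that passing to the limit $\varepsilon\to 0$ in the definition of $\hpol(f,A)$ is legitimate — namely $\hpol(f,A)=\lim_{\varepsilon\to 0}\hpol^\varepsilon(f,A)\geq \hpol^{\varepsilon_0}(f,A)\geq 1$ since $\hpol^\varepsilon(f,A)$ is nonincreasing in $\varepsilon$. Compared with \cite[Proposition 2.1]{ACM17}, the only generalization is that we work with the restricted quantity $\hpol(f,A)$ and assume merely $f(A)\supseteq A$ in place of invariance, which is exactly what makes the backward-orbit construction go through; the rest is identical.
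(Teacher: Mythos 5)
Your overall strategy is the same as the paper's: build a backward orbit of the non-recurrent point inside $A$ using $f(A)\supseteq A$, and use non-recurrence to separate the resulting $n$ points at a single scale $\varepsilon_0$. However, the key separation step as you wrote it is wrong. You test the pair $(z_j,z_k)$ with $j>k$ at time $k$, where $f^k(z_j)=z_{j-k}$ and $f^k(z_k)=z$, and then assert $d(z_{j-k},z)=d(f^{j-k}(z),z)>2\varepsilon_0$. But $z_{j-k}$ is a \emph{backward} iterate of $z$ (it satisfies $f^{j-k}(z_{j-k})=z$), not the forward iterate $f^{j-k}(z)$, so that identity is false and non-recurrence gives you no control over $d(z_{j-k},z)$. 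Concretely, for the map $g(x)=2x$ on $[0,\tfrac12]$, $g\equiv 1$ on $[\tfrac12,1]$, with $z=\tfrac12$ and the backward orbit $z_k=2^{-k-1}$, you may take $2\varepsilon_0=0.4$ (since $d(g^i(z),z)=\tfrac12$ for all $i\geq1$), yet $d(z_1,z)=\tfrac14<2\varepsilon_0$; your claimed inequality fails even though the set is in fact separated.

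The repair is to test the pair at time $j$ (the \emph{larger} index) instead: $f^j(z_j)=z$ while $f^j(z_k)=f^{j-k}(f^k(z_k))=f^{j-k}(z)$, and now non-recurrence gives $d(f^j(z_j),f^j(z_k))=d(z,f^{j-k}(z))>\varepsilon_0$ since $j-k\geq1$. As $j\leq n-1$ this is a legitimate coordinate for $(n,\varepsilon_0)$-separation, and the rest of your argument (the recursion producing the backward orbit in $A$, the count $\sep(n,\varepsilon_0,f,A)\geq n$, and the monotonicity of $\hpol^{\varepsilon}$ in $\varepsilon$) goes through unchanged. This corrected version is precisely the paper's proof.
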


\begin{proof}
Choose a non-recurrent point $x_0\in A$ and $\varepsilon >0$ such that $d(f^n(x_0), x_0) >\varepsilon$ for all $n\geq 1$. Since $A$ $f$-covers itself, for every $n$ we can choose points
$x_{-1}, \dots, x_{-n+1} \in A$ such that $f(x_{-i})=x_{-i+1}$, $i=1, \dots, n-1$. We claim that $\{x_0,\dots, x_{-n+1}\}$ is an $(n,\varepsilon)$-separated set. Indeed, if $0\leq i < j \leq n-1$, then $d(f^j(x_{-i}), f^j(x_{-j})) = d(f^{j-i}(x_0), x_0)>\varepsilon$. This shows that 
$\sep (n,\varepsilon, f, A) \geq n$. Then
\[
\hpol(f,A) \geq \hpol^{\varepsilon}(f,A) = \limsup_{n\to \infty} \frac{\log \sep (n, \varepsilon, f, A)}{\log n} \geq 1.\qedhere
\]
\end{proof}

\subsection{Polynomial vs.\ topological entropy}

The name ``polynomial entropy'' was chosen by Marco in~\cite{Mar13} because $\hpol$ measures the growth of distinguishable orbit segments at a ``polynomial level.'' This can be illustrated by the following comparison in Big-$\mathcal{O}$ notation. Topological entropy $\htop(f)$ may be characterized as the infimum of those $a>0$ such that for all $\epsilon>0$,
\begin{equation*}
\sep(n,\epsilon,f)=\mathcal{O}(e^{an}) \text{ as }n\to\infty
\end{equation*}
(so it is an exponential growth rate of complexity). In the same way, $\hpol(f)$ may be characterized as the infimum of those $a>0$ such that for all $\epsilon>0$,
\begin{equation*}
\sep(n,\epsilon,f)=\mathcal{O}(n^a) \text{ as }n\to\infty
\end{equation*}
(so it is a polynomial growth rate of complexity, except that the ``degree'' $a$ need not be an integer). Several times throughout the paper when we need to calculate polynomial entropy, we will use the fact that if $p(n)$ is a polynomial in $n$ of degree $d$, then $\frac{\log p(n)}{\log n} \to d$ as $n\to\infty$.

We have seen that polynomial entropy shares some properties with topological entropy, but they differ in the power formula. There are also other differences and we mention the following one, since we will use the described construction.

Bowen's formula~\cite[Theorem 17]{Bow71} which estimates from above the topological entropy of a skew product does not have analogue for polynomial entropy. For instance, in \cite[Proposition 3.5]{ACM17} there is a homeomorphism $f$ in the space $M= \mathbb S^1 \times (\{0\} \cup \{a_1, a_2, \dots\})$ where $(a_n)_{n=1}^\infty$ is a decreasing sequence of positive real numbers such that $a_n\to 0$. The homeomorphism $f$ acts as a rotation by angle $a_n$ on $\mathbb S^1 \times \{a_n\}$ and is the identity on $\mathbb S^1 \times \{0\}$. It is a skew product over the identity and each fibre map, being a circle rotation, also has zero polynomial entropy. However, it is shown in \cite[Proposition 3.5]{ACM17} that the numbers $a_n$ can be chosen in such a way that $\hpol(f)>0$. In particular, we see that Bowen's formula fails in this setting, and that the union property mentioned above does not extend to countable unions.


\subsection{Polynomial entropy of subshifts}


It is possible to calculate the polynomial entropy of a subshift $X \subseteq \{ 0,1\}^{\Nzero}$ by counting words. Let $ \omega(n)$ denote the number of distinct words of length $n$ in $X$. The function $\omega$  is called the {\it complexity function} of the subshift $X$.
Just as in the case of topological entropy (where, however, the limit exists since the denominator is $n$ rather than $\log n$), we have the following.

\begin{lem}\label{LemWords}
The polynomial entropy of a subshift $X$ is given by
$${\hpol}\left( \sigma |_X \right) = \limsup_{n \to \infty} \dfrac{\log \omega(n)}{\log n}.$$
\end{lem}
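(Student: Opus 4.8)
The plan is to compare separated sets in the subshift directly with words, as one does for topological entropy, the only difference being the slowly varying denominator $\log n$. Equip $\{0,1\}^{\Nzero}$ with the standard metric $d(x,y)=2^{-\min\{i\,:\,x_i\neq y_i\}}$ (and $d(x,x)=0$); by the invariance property (1) of $\hpol$ the choice of metric is irrelevant. The key feature of this metric is that for $x,y\in X$ and $m\geq 1$ one has $d(x,y)\leq 2^{-m}$ exactly when $x$ and $y$ agree on their first $m$ coordinates, and, since $\sigma$ merely shifts, $d(\sigma^i x,\sigma^i y)$ is governed by the agreement of $x$ and $y$ on the coordinates beyond position $i$.

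First I would settle the value $\varepsilon=1/2$. Since $d$ takes values in $\{0\}\cup\{2^{-m}:m\geq 0\}$, the inequality $\max_{0\leq i\leq n-1}d(\sigma^i x,\sigma^i y)>1/2$ holds iff $x_i\neq y_i$ for some $i\in\{0,\dots,n-1\}$, i.e.\ iff the length-$n$ prefixes of $x$ and $y$ differ. Hence a set $S\subseteq X$ is $(n,1/2,\sigma)$-separated iff its points have pairwise distinct length-$n$ prefixes, and therefore $\sep(n,1/2,\sigma|_X)=\omega(n)$ exactly (choose one point of $X$ for each admissible word of length $n$, and no more can be separated).

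Next, for an arbitrary $\varepsilon\in(0,1/2)$ I would sandwich $\sep(n,\varepsilon,\sigma|_X)$. Pick $k\in\N$ with $2^{-k}\leq\varepsilon$. On the one hand, every $(n,1/2)$-separated set is $(n,\varepsilon)$-separated, so $\sep(n,\varepsilon,\sigma|_X)\geq\omega(n)$. On the other hand, if $x,y\in X$ agree on their first $n+k$ coordinates, then for each $j\in\{0,\dots,n-1\}$ the points $\sigma^j x$ and $\sigma^j y$ agree on their first $k$ coordinates, so $d(\sigma^j x,\sigma^j y)\leq 2^{-k}\leq\varepsilon$; consequently any two points of an $(n,\varepsilon)$-separated set have distinct length-$(n+k)$ prefixes, giving $\sep(n,\varepsilon,\sigma|_X)\leq\omega(n+k)$. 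Thus
\[
\omega(n)\ \leq\ \sep(n,\varepsilon,\sigma|_X)\ \leq\ \omega(n+k)\qquad(0<\varepsilon<1/2,\ n\in\N).
\]

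Finally I would take logarithms, divide by $\log n$, and pass to the $\limsup$. Because $\tfrac{\log(n+k)}{\log n}\to 1$, the displayed inequalities force
\[
\hpol^{\varepsilon}(\sigma|_X)=\limsup_{n\to\infty}\frac{\log\sep(n,\varepsilon,\sigma|_X)}{\log n}=\limsup_{n\to\infty}\frac{\log\omega(n)}{\log n}
\]
for every $\varepsilon\in(0,1/2)$; letting $\varepsilon\to 0$ then gives $\hpol(\sigma|_X)=\limsup_{n\to\infty}\frac{\log\omega(n)}{\log n}$, as claimed. There is essentially no obstacle here; the only mild subtlety is the off-by-$k$ shift between $\sep(n,\varepsilon,\cdot)$ and $\omega$, which is precisely what disappears under $\log$ because $\log n$ is slowly varying — this is also exactly why, unlike for topological entropy where the denominator is $n$, one still obtains a well-defined, metric-independent quantity.
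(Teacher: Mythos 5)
Your proof is correct and complete; the paper itself omits the proof of Lemma~\ref{LemWords}, asserting it "just as in the case of topological entropy," and your argument is precisely the standard word-counting comparison that is being invoked: $\sep(n,1/2,\sigma|_X)=\omega(n)$, the sandwich $\omega(n)\leq\sep(n,\varepsilon,\sigma|_X)\leq\omega(n+k)$ for $2^{-k}\leq\varepsilon$, and the observation that the shift by $k$ is washed out by the $\log n$ denominator. All steps check out, including the correct handling of the strict inequality in the definition of separated sets and the use of shift-invariance to identify length-$n$ words of $X$ with length-$n$ prefixes of its points.
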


In \cite[Th\'eor\`{e}me 6.1]{Cas97} Cassaigne proved that for every $\alpha\in(1,2)$ there is a 1-sided infinite sequence $\underline{u}\in\{0,1\}^{\mathbb{N}_0}$ such that the complexity function $p_{\underline{u}}(n)$ which counts the number of distinct length $n$ words occuring in $\underline{u}$ is asymptotic with $n^\alpha$, meaning $\frac{p_{\underline{u}}(n)}{n^\alpha} \to 1$ as $n\to\infty$. Notice that in this case
\begin{equation*}
\limsup_{n\to\infty} \frac{\log p_{\underline{u}}(n)}{\log n} = \limsup_{n\to\infty} \frac{\log(n^\alpha)}{\log n} = \alpha.
\end{equation*}
Let $X$ denote the subshift obtained by taking the orbit closure of $\underline{u}$ under the shift map. On one hand, one can see from the construction in \cite{Cas97} that $\underline{u}$ is recurrent (though not syndetically recurrent) and so the orbit closure $X$ is surjective (though not minimal). On the other hand, it is well known that the number $\omega(n)$ of words of length $n$ in the orbit closure $X$ is the same as $p_{\underline{u}}(n)$. Therefore by Lemma~\ref{LemWords} we have the following.

\begin{prop}[Cassaigne, \cite{Cas97}]\label{P:Cass}
For every $\alpha\in(1,2)$ there is a surjective subshift $X$ in 2 symbols whose polynomial entropy is $\alpha$.
\end{prop}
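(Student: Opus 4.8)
The plan is to invoke Cassaigne's construction and feed it through Lemma~\ref{LemWords}; essentially all the content is already assembled in the paragraph preceding the statement, so the proof is short. First I would fix $\alpha\in(1,2)$ and let $\underline{u}\in\{0,1\}^{\Nzero}$ be the one-sided sequence furnished by \cite[Th\'eor\`{e}me 6.1]{Cas97}, so that $p_{\underline{u}}(n)/n^\alpha\to 1$; as already observed above, this yields $\limsup_{n\to\infty}\frac{\log p_{\underline{u}}(n)}{\log n}=\alpha$. Then I would set $X=\overline{\{\sigma^k\underline{u}:k\geq 0\}}$, the orbit closure of $\underline{u}$ under the shift $\sigma$, which is a subshift over two symbols.

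Two routine points then need to be checked. First, that $\sigma|_X$ is surjective: inspection of the construction in \cite{Cas97} shows that $\underline{u}$ is recurrent, i.e.\ $\underline{u}\in\overline{\{\sigma^k\underline{u}:k\geq 1\}}$, and hence $X=\overline{\{\sigma^k\underline{u}:k\geq 1\}}$; since $\sigma(X)$ is compact, hence closed, and contains every $\sigma^k\underline{u}$ with $k\geq 1$ (being the image of $\sigma^{k-1}\underline{u}\in X$), we get $\sigma(X)\supseteq X$, and with the obvious reverse inclusion $\sigma(X)=X$. (In other words, the orbit closure of a forward-recurrent point is always a surjective subshift.) Second, that the complexity function $\omega$ of $X$ coincides with $p_{\underline{u}}$, which is the standard fact that the language of an orbit closure is exactly the set of factors of the generating sequence --- already recalled in the text.

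Finally I would combine these with Lemma~\ref{LemWords} to conclude
\[
\hpol(\sigma|_X)=\limsup_{n\to\infty}\frac{\log\omega(n)}{\log n}=\limsup_{n\to\infty}\frac{\log p_{\underline{u}}(n)}{\log n}=\alpha .
\]
The main --- indeed the only --- substantive ingredient is Cassaigne's construction itself, together with the remark that the sequence it produces is recurrent so that the orbit closure is surjective. Everything else is bookkeeping about orbit closures and a direct application of the already-established word-counting lemma, so there is no genuine obstacle internal to this argument.
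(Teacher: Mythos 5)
Your proposal is correct and follows essentially the same route as the paper: invoke Cassaigne's sequence, take its orbit closure, use recurrence of $\underline{u}$ to get surjectivity, identify the complexity function of the orbit closure with $p_{\underline{u}}$, and apply Lemma~\ref{LemWords}. Your explicit verification that the orbit closure of a forward-recurrent point is a surjective subshift is a welcome bit of extra detail that the paper only asserts.
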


There is a stronger flexibility result.

\begin{prop}[{K\r{u}rka, \cite[Proposition 4.79]{Kur03}}]\label{P:Kurka}
For every $\alpha\in[1,\infty]$ there is a Toeplitz subshift $X$ in finitely many symbols whose polynomial entropy is $\alpha$.
\end{prop}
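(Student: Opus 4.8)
The plan is to reduce to counting words: by Lemma~\ref{LemWords} the polynomial entropy of a subshift equals $\limsup_{n}\log\omega(n)/\log n$, so it suffices to produce, for each $\alpha\in[1,\infty]$, a Toeplitz subshift over $\{0,1\}$ whose complexity function satisfies $\limsup_n\log\omega(n)/\log n=\alpha$. I would build the Toeplitz point $\underline u\in\{0,1\}^{\Nzero}$ by the usual ``hole-filling'' scheme: fix a divisibility chain of periods $1=p_0\mid p_1\mid p_2\mid\cdots$ and take $\underline u$ to be the coordinatewise limit of partially defined, $p_k$-periodic patterns $u^{(k)}$, where $u^{(k)}$ is undefined exactly on a $p_k$-periodic ``hole set'' $F_k$, the sets decrease ($F_0\supseteq F_1\supseteq\cdots$) with $\bigcap_kF_k=\varnothing$, and $u^{(k+1)}$ agrees with $u^{(k)}$ outside $F_k$ while filling in some (not all) of its holes periodically with period $p_{k+1}$. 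If a hole survives at every level and $\bigcap_kF_k=\varnothing$, then $\underline u$ is aperiodic and Toeplitz and $X:=\overline{\{\sigma^m\underline u:m\ge0\}}$ is a minimal --- hence surjective --- Toeplitz subshift.

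The point of this description is that the complexity of $X$ is controlled by the hole densities. Heuristically, a length-$n$ word of $X$ is specified by a ``phase'' modulo a suitable period $p_{k(n)}$ together with the $0/1$ values that $\underline u$ assigns to the positions in a window of length $n$ that are still undetermined at level $k(n)$; so $\omega(n)$ is, up to subpolynomial factors, of order $(\text{number of phases})\cdot 2^{(\text{number of visible undetermined positions})}$. The easy inequality gives an upper bound of this shape; for the matching lower bound one must arrange the construction so that within one block the undetermined positions that get filled at the next level are filled \emph{freely}, i.e.\ the level-$(k+1)$ pattern may be chosen among a prescribed family realizing every such filling, which forces the corresponding combinations to occur in $X$ as genuinely distinct words.

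It then remains to choose the period chain $(p_k)$ and the hole sets $F_k$ so that this count grows at the desired polynomial rate. For $\alpha=1$ one leaves essentially no freedom beyond what aperiodicity already forces, so $\omega(n)=\mathcal O(n)$, while $\omega(n)\ge n+1$ by the Morse--Hedlund theorem; hence $\hpol(\sigma|_X)=1$. For finite $\alpha>1$ one makes the holes sparse --- only about $(\alpha-1)\log_2 p_k$ of them per period $p_k$, suitably spread --- so that $\omega(p_k)$ is of order $p_k^{\alpha}$ along this cofinal sequence, while the period chain is chosen to grow slowly enough (and the holes to be distributed evenly enough within each period) that $\log\omega(n)/\log n$ never exceeds $\alpha+o(1)$; then $\limsup_n\log\omega(n)/\log n=\alpha$. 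For $\alpha=\infty$ one instead lets the number of holes per period grow faster than $c\log p_k$ for every $c$ (e.g.\ like $(\log p_k)^k$), so that $\log\omega(n)/\log n\to\infty$ along a subsequence.

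The step I expect to be the main obstacle is making the count rigorous --- in particular the lower bound: one has to specify exactly which holes are left free at each level, and how the later levels fill them, so that the degrees of freedom designed into the construction really survive into the orbit closure $X$ and are not collapsed by coincidences among the periodic patterns, and one has to check the upper estimate holds uniformly in $n$, not just along $n=p_k$. The reduction to complexity via Lemma~\ref{LemWords} and the verification that the resulting point is aperiodic and Toeplitz are then routine.
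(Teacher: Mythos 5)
First, a point of orientation: the paper does not prove this proposition at all --- it is quoted verbatim from K\r{u}rka \cite[Proposition 4.79]{Kur03} and used as a black box (see Remark~\ref{R:dend}, which also indicates that K\r{u}rka's subshifts live over alphabets of more than two symbols in general, whereas you insist on $\{0,1\}$). So there is no in-paper argument to compare against; what can be assessed is whether your sketch amounts to a proof. Your overall strategy --- build a Toeplitz point by the hole-filling scheme, reduce to the complexity function via Lemma~\ref{LemWords}, and tune the hole densities --- is the standard and correct one. But the proposal defers precisely the step where all of the content lies, and that step is not routine. Concretely, there is a genuine tension between your lower and upper bounds. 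To make ``the level-$(k{+}1)$ pattern fills the level-$k$ holes freely'' yield $\omega(p_k)\gtrsim p_k\cdot 2^{h_k}$ (with $h_k$ holes per period $p_k$), you need at least $2^{h_k}$ distinct fillings to actually occur among the $p_{k+1}/p_k$ translates of a $p_k$-block inside one $p_{k+1}$-period, which forces $p_{k+1}\geq p_k\,2^{h_k}\approx p_k^{\alpha}$, i.e.\ doubly exponential period growth. But then for intermediate lengths $p_k\ll n\ll p_{k+1}$ the only available upper estimates are $\omega(n)\leq p_k\cdot 2^{\#(F_k\cap[0,n))}$ (whose exponent is of order $(n/p_k)h_k\gg\log n$ if the level-$k$ holes are spread ``evenly'', as you propose) or $\omega(n)\leq p_{k+1}\cdot 2^{\#(F_{k+1}\cap[0,n))}$ (whose phase factor $p_{k+1}\approx p_k^{\alpha}$ can itself exceed $n^{\alpha}$). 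Either way the naive parameter choice you describe does not obviously keep $\log\omega(n)/\log n\leq\alpha+o(1)$ \emph{uniformly} in $n$, and since Lemma~\ref{LemWords} computes a $\limsup$, an overshoot at intermediate scales would change the answer. Resolving this requires a genuinely careful (non-uniform) placement of the holes and a scale-by-scale count, which is exactly what is missing.

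A secondary gap: you assert that clustering/spreading can be arranged so that ``the degrees of freedom designed into the construction really survive into the orbit closure,'' but you never verify minimality-independence of the word count from coincidences among the periodic skeletons, nor do you address whether every $\alpha\in[1,\infty]$ is attainable over a \emph{binary} alphabet (the cited statement only claims finitely many symbols, and the paper's later use of higher-order Gehman-like dendrites suggests the known construction does enlarge the alphabet). As written, the proposal is a reasonable plan of attack rather than a proof; the correct course here is either to carry out the counting in full or to do what the paper does and cite \cite[Proposition 4.79]{Kur03}.
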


K\r{u}rka's result is especially nice since it provides examples of minimal dynamical systems with flexible polynomial entropy, much in the spirit of Grillenberger's construction of uniquely ergodic shifts with flexible topological entropy~\cite{G73}. However, we will give preference to Cassaigne's subshifts in our work, since they are constructed in a binary alphabet.


\section{One-way Horseshoes}\label{S:one-way}
  \addtocontents{toc}{\protect\setcounter{tocdepth}{2}}%
  \setcounter{tocdepth}{2}%


We introduce the notion of a one-way horseshoe which will play the same role in the theory of \emph{polynomial} entropy of interval maps as the notion of the horseshoe plays in the theory of \emph{topological} entropy of interval maps. By Misiurewicz's theorem \cite[Theorem 4.3.5]{ALM00}, the topological entropy of an interval map $f$ is given by
horseshoes of iterates of $f$, and similarly it will turn out that the polynomial entropy of an interval map is given by one-way horseshoes of iterates of $f$, see Theorem~\ref{T:rigid}. Moreover, in both cases it is sufficient to look for horseshoes formed by closed intervals.

In this section, we use one-way horseshoes to give a lower bound for polynomial entropy that applies to any topological dynamical system, see Theorem~\ref{T:hor-ent}.

\begin{definition}\label{D:one-way}
Let $f:X\to X$ be a continuous map on a compact metric space. A \emph{one-way horseshoe of order $\ell$} (we will sometimes call it a \emph{one-way $\ell$-horseshoe}) is an indexed family $A_1, \ldots, A_{\ell}$ of pairwise disjoint compact sets such that $A_\ell$ (the last set in the family) contains a non-recurrent point of $f$ and $f(A_i)\supseteq A_j$ for all $1\leq i \leq j \leq \ell$.
\end{definition}

\begin{figure}[h!]
	\begin{tikzpicture}[scale=0.7]
	\begin{scope}
	\clip (0,0) rectangle (7,7);
	\draw [very thin] (0,0) rectangle (7,7);
	\draw [very thin] (0,0) -- (7,7);
	\draw [very thin] (0,0) grid (7,7);
	\draw [thick] (0,0) -- (1/2,7) -- (1,1) --(2,2) -- (5/2,7) -- (3,3) -- (4,4);
	\draw [thick] plot [smooth] coordinates {(4,4) (4.3, 4.6) (5,5)}; 
	\draw [thick] (5,5) -- (6,6) -- (6.5,4) -- (7,7);
	\end{scope}
	\node [above] at (3.5,7) {$f$};
	\node [below] at (0.5,0) {$A_1$};
	\node [below] at (2.5,0) {$A_2$};
	\node [below] at (6.5,0) {$A_3$};
	\node [below] at (4.5,0) {$A_4$};
	\end{tikzpicture}
	\caption{The compact sets (in fact intervals) $A_1, A_2, A_3, A_4$ form a one-way $4$-horseshoe for $f$. The set $A_4$ clearly contains a non-recurrent point.}\label{F:horse}
\end{figure}
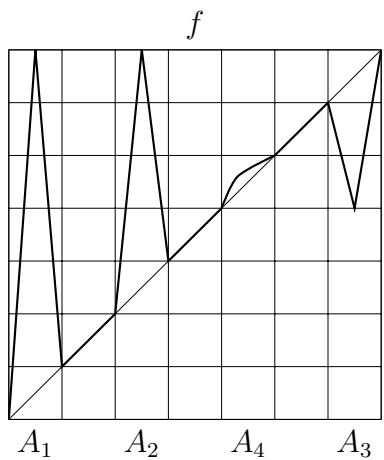

\begin{rem}\label{R:nonrec}
In the definition we require the existence of a non-recurrent point only in the last set $A_{\ell}$, see also Fig.~\ref{F:horse}. The covering properties of the sets in the one-way horseshoe imply that, if $\ell \geq 2$, each of the previous sets $A_i$ contains a non-recurrent point automatically, even without assuming that $A_{\ell}$ contains such a point (if $i< \ell$, choose a point $x\in A_i$  which travels into $A_{\ell}$ and then remains there for all time; this is possible because, due to compactness of $A_{\ell}$, there is a point in $A_{\ell}$ whose forward orbit is a subset of $A_{\ell}$). This in particular implies that if $A_1, \ldots, A_{\ell}$ is a one-way $\ell$-horseshoe, then any nonempty subfamily $A_{i_1}, \ldots, A_{i_k}$ with $1\leq i_1 < i_2 < \ldots < i_k \leq \ell$ is a one-way $k$-horseshoe. 

Observe also that in the definition we really need to assume that the last set $A_{\ell}$ contains a non-recurrent point.  Otherwise the existence of a one-way $\ell$-horseshoe would not imply that $\hpol(f)\geq \ell$ (for $\ell =1$ consider the identity), though by Theorem~\ref{T:hor-ent} we could claim that $\hpol(f)\geq \ell -1$ because $A_1, \ldots, A_{\ell-1}$ would be a one-way $\ell -1$ horseshoe (recall that $A_{\ell -1}$ contains a non-recurrent point automatically).

Moreover, it is worth recalling that each iterate $f^n$ has the same set of recurrent points as $f$ \cite[Lemma 25, p.82]{BC92}, so the presence of a non-recurrent point in $A_{\ell}$ does not depend on which iterate we use when looking for the one-way horseshoe.
\end{rem}

\begin{lem}\label{L:horse-iter}
If $A_1, \dots, A_{\ell}$ is a one-way $\ell$-horseshoe for $f$, then it is a one-way $\ell$-horseshoe for every iterate $f^n$, $n=1,2,\dots$.  	
\end{lem}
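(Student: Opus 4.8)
The plan is to prove that a one-way $\ell$-horseshoe $A_1,\dots,A_\ell$ for $f$ is again a one-way $\ell$-horseshoe for each iterate $f^n$. There are two things to check: first, that the covering relations $f^n(A_i)\supseteq A_j$ hold for all $1\le i\le j\le\ell$, and second, that $A_\ell$ still contains a point that is non-recurrent for $f^n$. The second point is immediate from Remark~\ref{R:nonrec}, where it is recalled (citing \cite[Lemma 25, p.82]{BC92}) that $f$ and $f^n$ have the same set of recurrent points; hence a point of $A_\ell$ that is non-recurrent for $f$ is also non-recurrent for $f^n$.

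So the real content is the covering property. First I would isolate the key elementary fact: if $B,C$ are compact sets with $f(B)\supseteq B$ and $f(B)\supseteq C$, then $f^2(B)\supseteq C$. Indeed, given $y\in C$, pick $x_1\in B$ with $f(x_1)=y$ (possible since $C\subseteq f(B)$); since $x_1\in B\subseteq f(B)$, pick $x_0\in B$ with $f(x_0)=x_1$; then $f^2(x_0)=y$, and $x_0\in B$, so $y\in f^2(B)$. More generally, iterating this, $f(B)\supseteq B$ together with $f(B)\supseteq C$ gives $f^k(B)\supseteq C$ for every $k\ge 1$.

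Now fix $n\ge 1$ and indices $1\le i\le j\le\ell$. The idea is to route a preimage chain through the set $A_j$. Since $i\le j$ we have $f(A_i)\supseteq A_j$, and since $j\le j$ we have $f(A_j)\supseteq A_j$, i.e.\ $A_j$ $f$-covers itself. By the fact just established (applied with $B=A_j$, $C=A_j$, using only the self-covering), $f^{n-1}(A_j)\supseteq A_j$. Composing, $f^n(A_i)=f^{n-1}(f(A_i))\supseteq f^{n-1}(A_j)\supseteq A_j$. This establishes $f^n(A_i)\supseteq A_j$ for all $1\le i\le j\le\ell$, which together with the non-recurrence observation completes the proof that $A_1,\dots,A_\ell$ is a one-way $\ell$-horseshoe for $f^n$.

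I do not expect a genuine obstacle here; the lemma is essentially bookkeeping. The only point requiring minimal care is making the preimage-chasing argument precise (using compactness is not even needed for the covering part, only surjectivity of $f$ restricted onto the relevant images), and correctly invoking the invariance of the recurrent set under taking iterates for the non-recurrent point in $A_\ell$. One could also phrase the whole argument in one line: the relation "$A$ $f$-covers $B$" is such that if $A$ covers $B$ and $B$ covers $B$ then $A$ $f^n$-covers $B$ for all $n$, and then apply it with the chain $A_i \to A_j \to A_j \to \cdots \to A_j$.
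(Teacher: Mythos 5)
Your proof is correct and follows essentially the same bookkeeping as the paper's one-line argument: the paper absorbs the extra $n-1$ iterates using the self-covering of $A_i$ (writing $f^n(A_i)=f(f^{n-1}(A_i))\supseteq f(A_i)\supseteq A_j$), while you absorb them using the self-covering of $A_j$; these are mirror images of the same idea, and your treatment of the non-recurrent point via the invariance of the recurrent set under iterates matches the paper's Remark~\ref{R:nonrec}.
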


\begin{proof}
$A_{\ell}$ contains a non-recurrent point by the assumption.  If $1\leq i \leq j \leq \ell$, then $f^n(A_i)=f(f^{n-1}A_i) \supseteq f(A_i) \supseteq A_j$.
\end{proof}

Easy examples show that the converse to this lemma is not true.

\medskip

By Lemma~\ref{L:Artigue}, if $f$ has a one-way horseshoe $A$ of order $1$, then $\hpol (f)\geq 1$. We prove more.

\begin{thm}\label{T:hor-ent}
Let $(X,d)$ be a compact metric space and $f\colon X\to X$ a continuous map. If $f$ (or a positive iterate of $f$)  has a one-way $\ell$-horseshoe, then $\hpol (f) \geq \ell$.
\end{thm}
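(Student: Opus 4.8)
By the power rule $\hpol(f^n)=\hpol(f)$ and Lemma~\ref{L:horse-iter} (or just the hypothesis), it suffices to treat the case where $f$ itself has the one-way $\ell$-horseshoe $A_1,\dots,A_\ell$; so assume this. The plan is to build, for each large $n$, an $(n,\varepsilon)$-separated subset of $X$ whose cardinality grows like a polynomial in $n$ of degree $\ell$, which by the remark on $\log p(n)/\log n\to d$ gives $\hpol^\varepsilon(f)\ge \ell$ and hence $\hpol(f)\ge\ell$. The separated points will be indexed by the ``entry times'' at which an orbit passes from one set of the horseshoe to the next.

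First I would fix the $\varepsilon$. Since the $A_i$ are pairwise disjoint compact sets, choose $\varepsilon>0$ smaller than half the minimum pairwise distance between them; additionally, using that $A_\ell$ contains a non-recurrent point $x_0$, shrink $\varepsilon$ further so that $d(f^k(x_0),x_0)>\varepsilon$ for all $k\ge 1$ — exactly as in Lemma~\ref{L:Artigue}. Next, the combinatorial heart: fix $n$ and consider all strictly increasing sequences of ``transition times'' $0=t_0\le t_1\le t_2\le\cdots\le t_{\ell-1}\le n-1$ (I will want them spaced out enough, say $t_{j}-t_{j-1}\ge 1$, or work with $\binom{n}{\ell-1}$-many choices). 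For each such tuple $(t_1,\dots,t_{\ell-1})$ I would construct, by repeatedly pulling back along the covering relations $f(A_i)\supseteq A_j$, a point $x=x_{(t_1,\dots,t_{\ell-1})}$ whose orbit spends times $[0,t_1)$ in $A_1$, times $[t_1,t_2)$ in $A_2$, \dots, and times $[t_{\ell-1},n)$ in $A_\ell$ — then follows $x_0$'s orbit inside $A_\ell$ (i.e.\ at time $t_{\ell-1}$ the point is $x_0$, so that $f^{t_{\ell-1}+k}(x)=f^k(x_0)$). Concretely one builds this orbit segment backwards: start with $x_0\in A_\ell$ at time $t_{\ell-1}$, use $f(A_{\ell-1})\supseteq A_\ell$ (and $f(A_\ell)\supseteq A_\ell$ if needed for the stretch $[t_{\ell-1},n)$ — but actually we just iterate forward there) to choose a preimage in $A_{\ell-1}$ at time $t_{\ell-1}-1$, continue down to time $t_{\ell-2}$, then jump to $A_{\ell-2}$, etc., all the way to time $0$ in $A_1$.

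Then I would verify these points are $(n,\varepsilon)$-separated. Take two distinct tuples $(s_j)$ and $(t_j)$; let $j$ be the first index where they differ, say $s_j<t_j$. At time $s_j$ the orbit of $x_{(s)}$ has already entered $A_{j+1}$ (or is at $x_0$ if $j=\ell-1$), while the orbit of $x_{(t)}$ is still in $A_j$; since these are different sets from the horseshoe and $\varepsilon$ is below half their separation, $d(f^{s_j}x_{(s)},f^{s_j}x_{(t)})>\varepsilon$. The one case needing the non-recurrence choice is when $j=\ell-1$: then at time $s_{\ell-1}$ we have $f^{s_{\ell-1}}x_{(s)}=x_0$ while $f^{s_{\ell-1}}x_{(t)}$ lies in $A_\ell$ but is of the form $f^{m}(x_0)$ for some $m\ge1$ only along the later part — I must arrange the construction so that in the ``tail'' stretch the orbit tracks $x_0$, so that actually $f^{s_{\ell-1}}x_{(t)}=x_0$ would force $s=t$; the honest way is: at time $\min(s_{\ell-1},t_{\ell-1})$ one point is at $x_0$ and the other is still in $A_{\ell-1}$ (if the last differing index creates a set mismatch) or else we compare $x_0$ with $f^{m}(x_0)$, $m=|s_{\ell-1}-t_{\ell-1}|\ge1$, giving distance $>\varepsilon$ by the choice of $\varepsilon$. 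So in every case the two orbit segments $\varepsilon$-separate by time $n-1$.

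Counting: the number of admissible tuples is at least $\binom{n}{\ell-1}$ (or $\binom{n-\ell}{\ell-1}$ if we impose spacing), a polynomial in $n$ of degree $\ell-1$ — so this alone only gives $\hpol\ge\ell-1$. To get the full $\ell$, note that we have freedom not only in the $\ell-1$ transition times but also, within the final block $A_\ell$, we may insert an extra recurrence-type multiplicity: since $A_\ell$ itself contains a non-recurrent point and $f(A_\ell)\supseteq A_\ell$, we can prepend to the $x_0$-tail a further variable-length excursion, effectively adding one more ``free index'' running over $O(n)$ values and yielding $\binom{n}{\ell-1}\cdot n \asymp n^{\ell}$ separated points — more cleanly, re-index so that there are $\ell$ transition parameters $0\le t_1\le\cdots\le t_\ell\le n-1$ with the orbit in $A_i$ during $[t_{i-1},t_i)$ and tracking $x_0$ only after time $t_\ell$, which gives $\binom{n}{\ell}\sim n^\ell/\ell!$ tuples. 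I expect the main obstacle to be exactly this bookkeeping — making sure the separation argument still works with $\ell$ (rather than $\ell-1$) free times and that the last coordinate genuinely contributes a factor of $n$ via the non-recurrence of $x_0$ in $A_\ell$ — together with the care needed when consecutive transition times coincide (two points whose tuples differ but whose orbits momentarily agree at some times): one must locate a time where a genuine set-mismatch or an $x_0$-vs-$f^m x_0$ comparison occurs. Once the count $\sep(n,\varepsilon,f)\gtrsim n^\ell$ is established, $\hpol^\varepsilon(f)=\limsup_n \tfrac{\log\sep(n,\varepsilon,f)}{\log n}\ge \ell$, and letting $\varepsilon\to0$ finishes the proof.
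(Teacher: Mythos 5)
Your proposal is correct and follows essentially the same route as the paper's proof: the paper indexes the separated points by nondecreasing words in $\{1,\dots,\ell\}$ of length $k\leq n-1$ terminating at the non-recurrent point, which is exactly your transition-time tuples $0\le t_1\le\cdots\le t_\ell\le n-1$ in different clothing, and both arguments separate via the same two cases (set mismatch, or $x_0$ versus $f^m(x_0)$) and count $\sim\binom{n+\ell-1}{\ell}$ points by stars and bars. Your self-correction from $\ell-1$ to $\ell$ free parameters is precisely the point of requiring the non-recurrent point in $A_\ell$: the arrival time at $x_0$ supplies the extra degree.
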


\begin{proof}
In view of the power rule for polynomial entropy we may assume that $f$ itself has a one-way horseshoe. Fix $\ell \geq 1$ and let $A_1, \dots, A_{\ell}$ form a one-way $\ell$-horseshoe for $f$. We remark that the case $\ell =1$ is covered by Lemma~\ref{L:Artigue} and so we may assume that $\ell \geq 2$. Choose a non-recurrent point $x\in A_{\ell}$ and $\varepsilon >0$ small enough that
\begin{enumerate}
	\item $d(A_i, A_j)> \varepsilon$ for $1\leq i < j \leq \ell$, and
	\item $d(f^m(x), x) > \varepsilon$ for $m\geq 1$.
\end{enumerate}	
A word of length $k\in \mathbb N_0$ is a sequence $\underline{s} = s_0\dots s_{k-1} \in \{1,\dots, \ell\}^k$, and its length is denoted by $|\underline{s}|=k$. There are exactly $\ell^k$ words of length $k$ (note that the empty word $\emptyset$ is the unique word of length $0$). Let $\NDW (k)$ denote the set of \emph{nondecreasing} words of length $k$, 
where $\underline{s}$ is nondecreasing if $s_i \leq s_j$ whenever $0\leq i < j <  |\underline{s}|$.
Because of the $f$-covering properties of the sets $A_1, \dots, A_{\ell}$ in the one-way horseshoe, for every $\underline{s} \in \NDW(k)$ there is a point $x_{\underline{s}}$ such that
$f^i(x_{\underline{s}})\in A_{s_i}$ for $i\in \{0,\dots,k-1\}$ and $f^k(x_{\underline{s}})=x$. (Notice that, if $k=0$, $x_{\emptyset}=x$.) We claim that the set
\[
E_n = \left\{x_{\underline{s}}\colon \underline{s} \in \bigcup_{k=0}^{n-1}\NDW(k)\right\}
\]
is $(n,\varepsilon)$-separated for $f$. For suppose $x_{\underline{s}}, x_{\underline{t}} \in E_n$, $\underline{s} \neq \underline{t}$. On one hand, if $\underline{s}$, $\underline{t}$ both have the same length, then there is $i$ such that $s_i\neq t_i$. Then $f^i(x_{\underline{s}}) \in A_{s_i}$, $f^i(x_{\underline{t}}) \in A_{t_i}$, so by~(1) we have
$d(f^i(x_{\underline{s}}), f^i(x_{\underline{t}}))>\varepsilon$. On the other hand, if $\underline{s}$ is shorter than $\underline{t}$, then $d(f^{|\underline{t}|}(x_{\underline{s}}), f^{|\underline{t}|}(x_{\underline{t}})) = d(f^{|\underline{t}|-|\underline{s}|}(x), x)>\varepsilon$ by (2). This shows that $E_n$ is $(n,\epsilon)$-separated, and additionally that distinct words $\underline{s}$, $\underline{t}$ lead to distinct elements $x_{\underline{s}}, x_{\underline{t}}$ of $E_n$. As a consequence we have
\[
\sep(n, \varepsilon, f) \geq  \# E_n = \sum_{k=0}^{n-1} \#\NDW(k).
\]
It remains to count the number of nondecreasing words of length $k$. For each $\underline{s} \in \NDW(k)$ let $\tau(\underline{s}) = (\tau_1, \tau_2, \dots, \tau_{\ell})$ be the ordered $\ell$-tuple where $\tau_i \geq 0$ is the number of occurrences of the symbol $i$ in the word 
$\underline{s}$; for example, if $k=7$, $\ell=4$ and $\underline{s} = 2233334$ then $\tau(\underline{s}) = (0,2,4,1)$. Clearly, $\tau$ gives a bijection of $\NDW(k)$ with the set of ordered $\ell$-tuples of nonnegative integers whose sum is $k$ (the number of such tuples is the same as the number of ways to distribute $k$ identical objects into $\ell$ distinct boxes). Thus, by the stars and bars theorem,
\[
\#\NDW(k) = \binom{k+\ell -1}{\ell -1}.
\]
Then $\# E_n = \sum_{k=0}^{n-1} \binom{k+\ell -1}{\ell -1}$ and by the hockey-stick combinatorial identity $\sum_{i=r}^m \binom{i}{r} = \binom {m+1}{r+1}$ we get 
\[
\# E_n = \binom{n+\ell -1}{\ell}.
\]
But this is a polynomial in $n$ of degree $\ell$. Therefore
\[
\hpol(f) \geq \hpol^{\varepsilon}(f) \geq \limsup_{n\to \infty} \frac{\log \# E_n}{\log n} = \limsup_{n\to \infty} \frac{\log \binom{n+\ell -1}{\ell}}{\log n} = \ell.\qedhere
\]
\end{proof}


\section{Rigidity of Polynomial Entropy on the Interval}
\subsection{Maps of Sharkovskii type 1}\label{SS:type1}


We now focus on maps of the interval, where one-way horseshoes provide not only a lower bound, but in fact determine the polynomial entropy. We start with maps $f:[0,1]\to[0,1]$ of \emph{Sharkovskii type 1}, defined by the condition that all periodic points of $f$ are in fact fixed points. This simplifies the analysis considerably, and we return to maps of other Sharkovskii types later. 

\begin{lem}[\cite{Cop55}, \cite{Sh65}]\label{lem:sharkovskii}
	Let $f$ have Sharkovskii type 1. Then
	\begin{enumerate}
		\item \emph{Same-side rule:} All images of a non-fixed point $x$ lie on the same side of $x$ as the first image. That is, if $f(x)>x$, then $f^n(x)>x$ for all $n\geq1$, and if $f(x)<x$ then $f^n(x)<x$ for all $n\geq 1$.
		\item \emph{End behavior:} Every trajectory of $f$ converges to a fixed point of $f$.
		\item \emph{Nonwandering points:} Every non-wandering point of $f$ is fixed.
	\end{enumerate}
\end{lem}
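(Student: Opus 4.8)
The plan is to prove the three claims of Lemma~\ref{lem:sharkovskii} in the order (1) $\Rightarrow$ (2) $\Rightarrow$ (3), since each builds on the previous. For the \emph{same-side rule}, I would argue by contradiction. Suppose $f(x)>x$ but some later image lies on the other side, i.e.\ $f^m(x) \leq x$ for some $m \geq 2$. Consider the function $g(t) = f(t) - t$ on the interval $[x, f(x)]$ (or the relevant subinterval). Since $f(x) > x$ we have $g(x) > 0$; I want to locate a point where the orbit ``crosses back'' and use the Intermediate Value Theorem to produce a point $p$ with $f(p) = p$ sitting between consecutive iterates, then use that fixed point together with the covering relations to manufacture a periodic point of period $2$ (a $2$-cycle), contradicting Sharkovskii type $1$. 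Concretely: if $f(x) > x$ and $f^2(x) < x$, then on the interval $[x, f(x)]$ the point $x$ maps above itself while $f(x)$ maps (under $f$) to $f^2(x) < x$; a standard interval-covering / IVT argument then yields either a fixed point configuration forcing a genuine period-$2$ orbit, or directly a point of period $2$. The general statement $f^n(x) > x$ for all $n$ follows by first establishing the $n=2$ case and then inducting, noting that if $f(x) > x$ then applying the $n=2$ fact to the non-fixed point $f^{k}(x)$ (which by induction also lies above $x$, hence also satisfies $f(f^k(x)) > f^k(x)$ by the same-side property applied locally) keeps the whole orbit above $x$. I would cite \cite{Cop55} and \cite{Sh65} here but still sketch this IVT-plus-covering argument, since it is the engine for everything.

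For \emph{end behavior} (2), fix $x$ and let $\omega(x)$ be its $\omega$-limit set. By (1), the orbit of $x$ is monotone in the sense that all iterates stay on one side of $x$; more usefully, I would show the sequence $(f^n(x))_{n \geq 1}$ is eventually monotone. Suppose $f(x) > x$. Then $f(x)$ is itself either fixed (done) or a non-fixed point with $f(f(x))$ on the same side as its first image; the key observation is that the orbit cannot oscillate: if it did, one could extract a subinterval on which $f$ reverses the order of two orbit points and again build a period-$2$ point via IVT. So $(f^n(x))$ is a bounded monotone sequence, hence converges to some limit $p \in [0,1]$; by continuity $f(p) = p$. (If $(f^n(x))$ is increasing, say, then $p = \lim f^n(x) = \lim f^{n+1}(x) = f(\lim f^n(x)) = f(p)$.) This gives convergence of every trajectory to a fixed point.

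For \emph{nonwandering points} (3), let $x$ be nonwandering and suppose toward a contradiction it is not fixed, say $f(x) > x$. Pick a small neighborhood $U = (x - \delta, x + \delta)$ with $\delta$ small enough that $f(x) > x + \delta$ (possible by continuity and $f(x) > x$). By (1) every point $y$ sufficiently close to $x$ with $y > x$ has all forward iterates staying above $x$ — in fact, I would show the orbit of any point near $x$ moves definitively away from $x$ and by (2) converges to a fixed point that is bounded away from $x$, so it cannot return to $U$. More carefully: by (2), $f^n(x) \to p$ for some fixed $p \neq x$, so $|f^n(x) - x|$ is bounded below for large $n$; a continuity/compactness argument then shows there is $N$ and a smaller neighborhood $V \ni x$ with $f^n(V) \cap V = \emptyset$ for all $n \geq N$, and handling $n = 1, \dots, N-1$ separately (using $f^n(x) \neq x$ for each such $n$ by (1) and shrinking $V$ further) contradicts nonwandering. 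Hence $x$ is fixed.

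I expect the main obstacle to be the same-side rule (1), specifically making the ``crossing-back implies a period-$2$ point'' step fully rigorous via the interval-covering lemma (an interval $J$ with $f(J) \supseteq K$ produces, for a suitable chain, a point with prescribed itinerary) rather than hand-waving with the IVT — one must be careful that the crossing produces a genuine period-$2$ \emph{orbit}, not merely a fixed point. Since this is a classical result attributed to Coppel and Sharkovskii, in the final write-up I would likely present (1) with a compact IVT-and-covering argument or a precise citation, and derive (2) and (3) from it as above, which are more routine.
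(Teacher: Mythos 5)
Parts (1) and (2) are classical results of Coppel and Sharkovskii which the paper simply cites; the only thing the paper actually proves is part (3), so that is where your proposal must carry the weight, and that is exactly where it has a genuine gap. Non-wandering is a statement about a whole neighborhood, not about the orbit of $x$ itself, and your key step --- ``by (2) the orbit converges to a fixed point bounded away from $x$, so a continuity/compactness argument gives $N$ and a neighborhood $V$ with $f^n(V)\cap V=\emptyset$ for all $n\geq N$'' --- is not a valid inference. Continuity controls $f^n$ on a neighborhood only for each \emph{fixed} $n$, with the neighborhood shrinking as $n$ grows, and there is no uniformity in $n$ to extract. Indeed the implication ``the orbit of $x$ converges to a fixed point $p\neq x$, hence $x$ is wandering'' is false in general (every preimage of the fixed point $2/3$ of the tent map is non-wandering), so a correct proof must invoke the type-1 hypothesis a second time to rule out \emph{returns of nearby orbits}, not merely the escape of the orbit of $x$. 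The same-side rule applied at a nearby point $y$ only gives $f^n(y)>y$, which does not prevent the orbit of $y$ from re-entering a neighborhood of $x$ from above before settling near its limit; and since the time of last return is not uniformly bounded over $y$, ``each orbit returns only finitely often'' does not yield a wandering neighborhood. The paper closes exactly this gap with a combinatorial argument you do not have: choose a connected $U\ni x$ lying strictly to the left of both $U_1=f(U)$ and $U_2=f^2(U)$; if $y\in U$ returns at time $m$, then $y_m<y_1$ forces (same-side rule at $y_1$) all later iterates below $y_1$, the first exit time $j$ from $U_1$ lands $y_j\in U_2$ strictly between $U$ and $U_1$, and then $y_m\in U$ and $y_{m+1}\in U_1$ lie on opposite sides of $y_j$, contradicting the same-side rule at $y_j$. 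Some argument of this kind, tracking the returning orbit itself, is indispensable.

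A secondary error: your proof of (2) claims the orbit is eventually monotone because oscillation would create a period-$2$ point. That is false. The affine map $f(x)=\tfrac34-\tfrac{x}{2}$ on $[0,1]$ has the single periodic point $\tfrac12$ (so it has Sharkovskii type 1), yet every non-fixed orbit oscillates around $\tfrac12$ forever; order reversal on an interval does not by itself produce a $2$-cycle. Since the paper cites (1) and (2) rather than proving them this does not affect the comparison with the paper's own proof, but in a self-contained write-up the convergence statement needs a different argument (e.g.\ the standard proof of Coppel's theorem), and note that the paper's proof of (3) deliberately avoids using (2) at all, relying only on the same-side rule.
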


Part (3) of Lemma~\ref{lem:sharkovskii} is not explicitly stated in~\cite{Cop55},~\cite{Sh65}, but it follows easily from the other two parts. We include the proof for completeness.

\begin{proof}[Proof of Lemma~\ref{lem:sharkovskii}~(3)]
Suppose $x$ is non-wandering but not fixed. We may assume that $x<f(x)$. By the same side rule also $x<f^2(x)$. By continuity we can choose a small connected neighborhood $U$ of $x$ which is to the left of both $U_1=f(U)$ and $U_2=f^2(U)$ (although $U_1$ and $U_2$ may overlap). Since $x$ is non-wandering there is a point $y$ in $U$ whose trajectory returns to $U$. Write $y_i=f^i(y)$ for each natural number $i$ and let $m$ be such that $y_m \in U$. Clearly $m>2$. Since $y_m \in U$ and $y_1\in U_1$ we have $y_m <y_1$ and so by the same-side rule $y_i < y_1$ for all $i> 1$ (so $y_1$ is the right-most point in the forward trajectory of $y$). Let $j$ be the smallest integer in the range $2\leq j\leq m$ so that $y_j$ is not in $U_1$. Since $U_2 = f(U_1)$ the minimality of $j$ guarantees that $y_j \in U_2$, so $y_j$ is to the right of $U$ (hence $j<m$). On the other hand, the connected set $U_1$ contains $y_1$ and does not contain $y_j<y_1$. So $y_j$ lies between $U$ and $U_1$. But then $y_m\in U$ lies to the left of $y_j$, and $y_{m+1}\in U_1$ lies to the right of $y_j$, contradicting the same-side rule.
\end{proof}

By $\Fix(f)$ or $\Per(f)$ we denote the set of fixed points or periodic points of $f$, respectively. If $J\subseteq [0,1]$ then the \emph{orbit} of $J$ is the set $\Orb(J)= \bigcup_{n=0}^{\infty} f^n(J)$.

As a technical tool which will allow us to construct one-way horseshoes, we introduce the following notion.

\begin{definition}\label{D:chains}
Let $f$ have Sharkovskii type 1. An interval $I=(a,b)$ with $a,b\in\Fix(f)$ and $I\cap\Fix(f)=\emptyset$ is called an \emph{essential interval}. An indexed family $I_1, I_2, \ldots, I_{\ell}$ of distinct essential intervals such that $\Orb(I_i) \supseteq I_{i+1}$ for all $i<\ell$ is called a \emph{chain of essential intervals of length $\ell$} (or more simply an \emph{$\ell$-chain of essential intervals}).
\end{definition}

We are coming to our main rigidity theorem for interval maps of Sharkovskii type 1. From now on we take zero to be the supremum of the empty set, so for example if $f$ has no essential intervals, then the quantity in part (b) of the next theorem is zero.

\begin{thm}\label{T:type1}
For an interval map $f$ of Sharkovskii type 1, the following four quantities are equal (and they belong to $\mathbb N_0 \cup \{\infty\}$).
\begin{enumerate}
	\item [(a)] The polynomial entropy $\hpol (f)$.
	\item [(b)] The supremum of the lengths of chains of essential intervals for $f$.
	\item [(c)] The supremum of the orders of one-way horseshoes of $f$ and its iterates.
	\item [(d)] The supremum of the orders of one-way horseshoes of $f$ and its iterates consisting of (closed) intervals.
\end{enumerate}
Consequently, if $\hpol(f)$ is finite, then it is a nonnegative integer.
\end{thm}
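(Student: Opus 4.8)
The plan is to prove the chain of inequalities (b) $\le$ (c) $\le$ (d) $\le$ (a) $\le$ (b) with one trivial step among them. Actually, since a one-way horseshoe of closed intervals is in particular a one-way horseshoe, (d) $\le$ (c) is automatic, and by Theorem~\ref{T:hor-ent} we get (c) $\le$ (a) (a one-way $\ell$-horseshoe for an iterate forces $\hpol(f)\ge\ell$). So the substantive work is to prove (b) $\le$ (d), i.e.\ every $\ell$-chain of essential intervals yields a one-way $\ell$-horseshoe of closed intervals for some iterate, and (a) $\le$ (b), i.e.\ $\hpol(f)$ is bounded above by the supremum of chain-lengths. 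The final clause — integrality of finite $\hpol(f)$ — is then immediate, since a supremum of a set of nonnegative integers is in $\Nzero\cup\{\infty\}$.

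\textbf{Step 1: (b) $\le$ (d).} Given essential intervals $I_1,\dots,I_\ell$ with $\Orb(I_i)\supseteq I_{i+1}$, I would first upgrade the ``orbit covers'' condition to genuine covering by a single iterate: since each $I_j$ is compact-in-$[0,1]$ up to endpoints and $\Orb(I_i)=\bigcup_n f^n(I_i)$ is an increasing union of connected sets (the endpoints of $I_i$ being fixed, $f^n(I_i)$ is a subinterval whose closure contains a neighborhood growing monotonically by the same-side rule), there is a single $N$ with $f^N(\overline{I_i})\supseteq \overline{I_j}$ for all $i\le j$ simultaneously. Here the same-side rule (Lemma~\ref{lem:sharkovskii}(1)) is what makes the orbit of an essential interval a nested, monotone family — this is the geometric heart of the step. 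One must also check the covering $f^N(I_i)\supseteq I_i$ itself (the diagonal $i=i$ case of a horseshoe): an essential interval $(a,b)$ has no fixed points inside, so $f-\mathrm{id}$ has constant sign on it, and combined with the fixed endpoints one sees $f$ maps $\overline{I_i}$ over itself after enough iterates. Finally, the ``non-recurrent point in $A_\ell$'' requirement: take closed subintervals $A_i\subseteq I_i$, pairwise disjoint (shrink slightly away from the shared fixed endpoints so that the $A_i$ are disjoint compact sets strictly inside distinct essential intervals), still satisfying the covering by enlarging $N$ if needed; a point of $A_\ell$ with $f(x)\ne x$ is non-wandering-free by Lemma~\ref{lem:sharkovskii}(3), hence non-recurrent. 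This gives a one-way $\ell$-horseshoe of closed intervals for $f^N$.

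\textbf{Step 2: (a) $\le$ (b).} This is the hard direction and the main obstacle. Let $m$ be the supremum of chain lengths (assume $m<\infty$; if $m=\infty$ there is nothing to prove). I would show $\hpol(f)\le m$ by bounding $\sep(n,\varepsilon,f)$ by a polynomial of degree $m$ in $n$. Fix $\varepsilon>0$; the complement of the $\varepsilon$-neighborhood of $\Fix(f)$ meets only finitely many essential intervals, and any two points $\varepsilon$-separated along an $n$-orbit must be ``distinguished'' by passing through essential intervals. The idea is a coding/itinerary argument: by Lemma~\ref{lem:sharkovskii}(1)–(2) every orbit is eventually monotone and converges to a fixed point, so the itinerary of a point (the sequence of essential intervals its orbit visits, in order) is a monotone walk in a DAG whose vertices are essential intervals and whose edges record ``orbit of $I$ reaches $J$''; the longest directed path in this DAG has length $m$. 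The number of orbit segments of length $n$ that are pairwise $\varepsilon$-distinguishable should then be controlled by the number of length-$n$ itineraries that make at most $m$ ``jumps'', each jump landing in one of finitely many intervals and the time spent in each interval being a free parameter in $\{0,\dots,n\}$ — giving a count $O(n^m)$ by a stars-and-bars estimate analogous to the one in the proof of Theorem~\ref{T:hor-ent}. Making this rigorous requires: (i) showing that within a single essential interval the map contributes only boundedly many $\varepsilon$-separated orbit pieces independent of $n$ (because the dynamics there is monotone-to-an-endpoint, so it behaves like a single ``slow'' strand), and (ii) handling the $\varepsilon$-neighborhoods of the fixed points, where orbits can linger but cannot be $\varepsilon$-distinguished. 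I expect (i) to be the delicate point: one needs that an interval map with no interior fixed point on $(a,b)$ and fixed endpoints has $\hpol=0$ on that interval, or at least contributes a bounded separated count, which should follow from monotone convergence of every orbit to an endpoint plus a compactness/uniform-continuity argument. The upper bound then assembles from the union property and the power rule.

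\textbf{Step 3: conclusion.} Combining, (a)=(b)=(c)=(d), and since (b) is a supremum of lengths of chains, a nonnegative integer for each finite chain, the common value lies in $\Nzero\cup\{\infty\}$; in particular if $\hpol(f)$ is finite it is a nonnegative integer. The main obstacle, as noted, is the upper bound $\hpol(f)\le m$ in Step~2 — specifically the careful bookkeeping of $\varepsilon$-separated orbit segments in terms of itineraries through finitely many essential intervals, and the auxiliary fact that a single essential interval carries zero polynomial entropy.
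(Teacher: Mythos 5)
Your overall architecture matches the paper's (chains yield interval horseshoes, horseshoes yield the lower bound via Theorem~\ref{T:hor-ent}, and a coding/counting argument yields the upper bound), but both substantive steps contain genuine gaps. In Step 1, the construction of the $A_i$ fails as written: if you shrink $A_i$ to a compact subinterval $[u,v]$ lying \emph{strictly inside} an open up interval $I_i=(a_i,b_i)$, then the same-side rule gives $f^N(x)>x\geq u$ for every $x\in[u,v]$ and every $N\geq 1$, so $u\notin f^N(A_i)$ and the required self-covering $f^N(A_i)\supseteq A_i$ (the case $i=j$ in Definition~\ref{D:one-way}) fails for \emph{every} iterate; enlarging $N$ only pushes $f^N(A_i)$ further from $A_i$. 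The correct construction (Proposition~\ref{P:chain-hor}) anchors each $A_i$ at the fixed \emph{source} of $I_i$, e.g.\ $A_i=[a_i,\tfrac{a_i+b_i}{2}]$ for an up interval, and pairwise disjointness is then not obtained by shrinking but deduced from the chain condition itself: $\Orb(I_i)\supseteq I_j$ forces $I_j$ to lie entirely beyond the far endpoint of $I_i$, so the source-to-midpoint halves cannot meet. (Non-recurrence in $A_\ell$ is fine either way, by Lemma~\ref{lem:sharkovskii}(3).)

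In Step 2 the decisive gap is that your DAG conflates ``the orbit of a point of $I$ later visits $J$'' with the chain condition $\Orb(I)\supseteq J$. These are not equivalent: $\Orb(I)$ can terminate strictly inside a down interval $J$, intersecting it without covering it (the paper notes this explicitly before Lemma~\ref{lem:covering}). Hence an itinerary through $k$ essential intervals does not produce a $k$-chain, the longest path in your ``reaches'' DAG is not bounded by $m$, and the $O(n^m)$ count does not follow. The paper repairs this with two quantitative lemmas: when an orbit leaves a small neighborhood of a fixed point it must pass through an essential interval \emph{at a point close to that interval's source} (Lemma~\ref{lem:find-ess-int}), and entering $J$ near $\source(J)$ with $\source(I)$, $\source(J)$ far apart does upgrade intersection to covering (Lemma~\ref{lem:covering}). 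Moreover, your auxiliary claim that a single essential interval carries zero polynomial entropy, or at least a bounded separated count, is false: for an essential interval $I$ one has $f(\overline I)\supseteq\overline I$ and $I$ consists of wandering points, so Lemma~\ref{L:Artigue} gives $\hpol(f,\overline I)\geq 1$ and $\sep(n,\epsilon,f,\overline I)\gtrsim n$. Each essential interval (more precisely, each lingering near a fixed point) contributes a factor of order $n$, and the degree-$m$ bound arises from composing $m$ such linear contributions by stars and bars; the paper organizes this by coding with finitely many balls around fixed points (constant blocks) plus finitely many wandering balls each usable once, and counting words with a bounded number of switches. Your plan has the right shape of the count but is missing exactly the lemmas that make the correspondence between separated orbits, switches, and chains valid.
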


\begin{proof}[Proof outline]
To prove Theorem~\ref{T:type1} we will establish that for any given integer $\ell\geq 1$, the following statements are equivalent.
\begin{enumerate}[(i)]
\item $f$ has an $\ell$-chain of essential intervals $I_1, \ldots, I_\ell$.
\item Some iterate of $f$ has a one-way $\ell$-horseshoe $A_1, \ldots, A_\ell$ with each $A_i$ an interval.
\item Some iterate of $f$ has a one-way $\ell$-horseshoe $A_1, \ldots, A_\ell$.
\item $\hpol(f)\geq\ell$.
\item $\hpol(f)>\ell-1$.
\end{enumerate}
(Notice that without (v) in this list of equivalent conditions it would not be apriori excluded that  $\hpol(f)$ is finite and non-integer.)

We have already shown (iii)$\implies$(iv) in Theorem~\ref{T:hor-ent}. The implications (iv)$\implies$(v) and (ii)$\implies$(iii) are trivial. 
To prove (i)$\implies$(ii) we show how to construct a one-way horseshoe from a chain of essential intervals in Proposition~\ref{P:chain-hor} below. The implication (v)$\implies$(i) is established in Proposition~\ref{P:ell-chain} through a coding argument which takes up all of Subsection~\ref{SS:coding}.
\end{proof}

Before finishing the proof of Theorem~\ref{T:type1}, we mention a few corollaries. First, we are able to deduce when a type 1 map has zero polynomial entropy.

\begin{cor}\label{C:zero}
An interval map of Sharkovskii type 1 has zero polynomial entropy if and only if its set of fixed points is connected.
\end{cor}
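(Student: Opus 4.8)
The plan is to read off the corollary directly from Theorem~\ref{T:type1}, using the equivalence of quantities (a) and (b): $\hpol(f) = 0$ if and only if $f$ has no chain of essential intervals of length $\ge 1$, i.e.\ if and only if $f$ has no essential interval at all. So the real content is the elementary topological fact that, for a type~1 map, $\Fix(f)$ is connected if and only if $f$ has no essential interval.

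First I would recall that $\Fix(f)$ is a closed subset of $[0,1]$ (as the preimage of the diagonal under the continuous map $x\mapsto(x,f(x))$), and it is nonempty since every continuous self-map of $[0,1]$ has a fixed point. A nonempty closed subset of $[0,1]$ is connected precisely when it is an interval, i.e.\ when its complement in $[0,1]$ contains no bounded component; and a bounded component of $[0,1]\setminus\Fix(f)$ is exactly an open interval $(a,b)$ with $a,b\in\Fix(f)$ and $(a,b)\cap\Fix(f)=\emptyset$ — that is, an essential interval in the sense of Definition~\ref{D:chains}. Hence $\Fix(f)$ is disconnected if and only if an essential interval exists.

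Now I would combine the two observations. If $\Fix(f)$ is connected, there is no essential interval, so the supremum in Theorem~\ref{T:type1}(b) is $0$ (recall the convention that the supremum of the empty set is zero), and therefore $\hpol(f)=0$ by the equality of (a) and (b). Conversely, if $\Fix(f)$ is disconnected, pick an essential interval $I$; the single-term family $I_1=I$ is vacuously a $1$-chain of essential intervals (the condition $\Orb(I_i)\supseteq I_{i+1}$ for all $i<\ell$ is empty when $\ell=1$), so the supremum in (b) is at least $1$, whence $\hpol(f)\ge 1>0$.

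There is no serious obstacle here: the only point to be careful about is the degenerate case $\ell=1$ in the definition of a chain — making sure a lone essential interval already counts as a $1$-chain — and the bookkeeping with the empty-supremum convention, both of which are already flagged in the statement of Theorem~\ref{T:type1}. Everything else is the standard characterization of connected closed subsets of the line.
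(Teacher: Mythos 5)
Your proposal is correct and follows essentially the same route as the paper: both read the corollary off the equality of quantities (a) and (b) in Theorem~\ref{T:type1}, noting that $\Fix(f)$ is disconnected exactly when an essential interval exists and that a lone essential interval is already a $1$-chain. You merely spell out the elementary topological equivalence (connectedness of the closed set $\Fix(f)$ versus existence of a gap $(a,b)$ with fixed endpoints) that the paper takes for granted.
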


\begin{proof}
If the fixed point set is connected, then there are no essential intervals, so there can be no chains of essential intervals and the supremum of the lengths of those chains is zero. Then the polynomial entropy is also zero. But if the fixed point set is not connected, then there is at least one essential interval, which is already a chain of length one, and so the polynomial entropy is at least one.
\end{proof}

Another useful corollary is about (not necessarily strictly) monotone maps. 

\begin{cor}\label{C:monot}
A monotone interval map, in particular a homeomorphism, has polynomial entropy either 0 or 1, depending on whether the set of periodic points is connected or not, respectively.  
\end{cor}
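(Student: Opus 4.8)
The plan is to deduce Corollary~\ref{C:monot} from Theorem~\ref{T:type1} (more precisely, from Corollary~\ref{C:zero}), via a reduction to the Sharkovskii type 1 case. First I would argue that a monotone interval map $f$ has Sharkovskii type~1, i.e.\ every periodic point is fixed. If $f$ is nondecreasing, then $f^n$ is also nondecreasing for every $n$, and a nondecreasing map cannot have periodic points of period greater than~1: for a nondecreasing map $g$, if $x<g(x)$ then $g(x)<g^2(x)$ by monotonicity and inductively $g^n(x)$ is strictly increasing, so $x$ is not periodic (and symmetrically if $x>g(x)$). If $f$ is nonincreasing, then $f^2$ is nondecreasing, so by the previous case $f^2$ has only fixed points; hence all periodic points of $f$ have period $1$ or $2$. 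A period-$2$ point of $f$ is a fixed point of $f^2$, so to show it is actually fixed for $f$ one can use the intermediate value theorem on $g(x)=f(x)-x$: since $f$ is nonincreasing it has at least one fixed point $p$, and any point $x<p$ satisfies $f(x)\geq f(p)=p>x$ while any $x>p$ satisfies $f(x)\leq p<x$, so the only way to have $f^2(x)=x$ is $x=p$. (Alternatively one cites that a nonincreasing interval map has a unique fixed point and no other periodic orbits, a standard fact.) Thus $f$ has Sharkovskii type~1 and $\Per(f)=\Fix(f)$.

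Next I would invoke Corollary~\ref{C:zero}: since $f$ is of type~1, $\hpol(f)=0$ if and only if $\Fix(f)$ is connected, which by the previous paragraph is the same as saying $\Per(f)$ is connected. It remains to show that when $\Fix(f)$ is \emph{not} connected, $\hpol(f)$ is exactly~1 (not more). By Corollary~\ref{C:zero} it is at least~1, so I only need an upper bound $\hpol(f)\leq 1$. For this I would argue that $f$ has no $2$-chain of essential intervals, whence by Theorem~\ref{T:type1}(b) the supremum of chain lengths is~1 and so $\hpol(f)\leq 1$. Indeed, if $I=(a,b)$ is an essential interval (so $a,b\in\Fix(f)$ and $f$ has no fixed point strictly between them), then by the same-side rule and monotonicity the orbit $\Orb(I)$ stays inside $[a,b]$: $f$ maps $[a,b]$ into itself since $f(a)=a$, $f(b)=b$ and $f$ is monotone, so $f([a,b])\subseteq[a,b]$ and hence $\Orb(I)\subseteq[a,b]$. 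Therefore $\Orb(I)$ cannot contain any other essential interval $I'=(a',b')$, because such an $I'$ would have to be disjoint from $I$ (distinct essential intervals are disjoint, their endpoints being fixed points not lying in the interior of the other), forcing $I'\subseteq[a,b]\setminus I$, but that set consists only of the endpoints $a,b$ and points outside—in any case $\Orb(I)\subseteq[a,b]$ while $I'$ must reach outside or be trapped, and a careful check shows $I'\not\subseteq\Orb(I)$. Hence no chain of length~2 exists.

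The main obstacle I anticipate is the bookkeeping in the last step: making the claim ``$\Orb(I)\subseteq[a,b]$, hence no other essential interval is covered'' fully rigorous, in particular checking that $\Orb(I)$ cannot swallow a whole essential interval lying in a \emph{different} component of $[0,1]\setminus\Fix(f)$. The clean way is: for a monotone $f$, if $a,b$ are consecutive fixed points then $[a,b]$ is $f$-invariant, so the orbit of any subset of $(a,b)$ never escapes $[a,b]$; a distinct essential interval $I'$ lies in a different complementary interval of $\Fix(f)$, hence $\overline{I'}\not\subseteq[a,b]$, so $I'\not\subseteq\Orb(I)$. That settles (i) with $\ell=2$ being impossible. (One should also note the degenerate possibility that $[0,1]\setminus\Fix(f)$ has infinitely many components—this is fine, each essential interval still traps its own orbit, so still no $2$-chain.) Finally, the homeomorphism case is immediate since a homeomorphism of $[0,1]$ is strictly monotone, hence monotone.
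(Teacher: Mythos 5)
There is a genuine gap in your first step: the claim that a monotone interval map has Sharkovskii type 1 is false when the map is nonincreasing, and the argument you give for it does not work. For $f(x)=1-x$ the unique fixed point is $p=\tfrac12$, and indeed $f(x)>x$ for $x<p$ and $f(x)<x$ for $x>p$, but this does not prevent $f^2(x)=x$; in fact $f^2=\mathrm{id}$, so \emph{every} point is periodic of period $2$. The parenthetical ``standard fact'' that a nonincreasing interval map has no periodic orbits other than its fixed point is simply not true. Worse, the gap is consequential for the statement you are proving: for a nonincreasing map $\Fix(f)$ is always a single point (hence connected), while $\Per(f)=\Fix(f^2)$ can be disconnected. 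For example, $f(x)=1-x^2$ on $[0,1]$ is decreasing with $\Fix(f)=\{(\sqrt5-1)/2\}$ but $\Fix(f^2)=\{0,(\sqrt5-1)/2,1\}$, so $\Per(f)$ is disconnected and $\hpol(f)=1$; your reduction, taken at face value, would report $\hpol(f)=0$ here. So you cannot apply Corollary~\ref{C:zero} or Theorem~\ref{T:type1} to $f$ itself in the nonincreasing case, because $f$ is genuinely of type $2$, not type $1$.

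The repair is exactly the paper's route: pass to $g=f^2$, which is nondecreasing and hence of Sharkovskii type $1$ (your argument for nondecreasing maps is correct, modulo the harmless non-strictness of $g(x)\leq g^2(x)$), note that $\Per(f)=\Fix(g)$, and use the power rule $\hpol(f)=\hpol(g)$. After that, the remainder of your proof is sound and coincides with the paper's: if $\Fix(g)$ is connected then $\hpol(g)=0$ by Corollary~\ref{C:zero}; otherwise there is an essential interval, and since $g$ is nondecreasing with fixed endpoints on each essential interval $I=(a,b)$ we get $g([a,b])\subseteq[a,b]$, so $\Orb(I)\subseteq[a,b]$ cannot contain a distinct essential interval, there is no $2$-chain, and Theorem~\ref{T:type1} gives $\hpol(g)=1$. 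Your careful justification of ``no $2$-chain'' is a welcome expansion of the paper's one-line remark; only the reduction to type $1$ needs to be rerouted through $f^2$.
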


\begin{proof}
For such a map $f$, $g=f^2$ is (not necessarily strictly) increasing and so is of Sharkovskii type 1. Moreover, $\Per(f)=\Fix(g)$. By the power rule,  $\hpol (f) =\hpol (g)$. If $\Fix(g)$ is connected, we have $\hpol(g)=0$ by Corollary~\ref{C:zero}. Otherwise $g$ has an essential interval but, since it is increasing, no nontrivial chain of essential intervals. Therefore $\hpol (g)=1$ by Theorem~\ref{T:type1}.
\end{proof}

\begin{rem}
In \cite{L13} it is proved that the polynomial entropy of an orientation preserving homeomorphism of the circle equals $1$ when the homeomorphism is not conjugate to a rotation and it is $0$ otherwise. Lemma 3.1 in the same preprint says something for interval maps which also follows from our Theorem~\ref{T:type1}: Let $I = [a, b]$ be a compact interval in $\mathbb R$ and let $f\colon I\to I$ be a continuous, increasing function such that $f(a) = a$, $f(b)=b$ and $f(x)-x\neq 0$ for all $x\in (a, b)$. Then $\hpol(f) = 1$. It was also shown in \cite{GC21} that a monotone map $f:I\to I$ on a compact interval $I=[a,b]$ has polynomial entropy either 0 or 1, depending on whether or not the second iterate $f^2$ restricted to the core $\bigcap_{n=0}^\infty f^n(I)$ is equal to the identity.
\end{rem}

Now we begin to show how a chain of essential intervals can be used to construct a one-way horseshoe.

\begin{definition}
Let $f$ have Sharkovskii type 1, and let $\I=\I(f)$ denote the set of all essential intervals. We classify each $I\in\I$ as an \emph{up} (respectively \emph{down}) interval if $f(x)>x$ (respectively $f(x)<x$) for all $x\in I$. The \emph{source} of $I=(a,b)\in \I$ is $a$ if $I$ is an up interval and $b$ if $I$ is a down interval. This defines a function $\I \ni I \mapsto \source(I) \in \Fix(f).$
\end{definition}

\begin{lem}\label{lem:orb-ess}
For an interval map $f$ of Sharkovskii type 1,
\begin{itemize}
\item the orbit $\Orb(I)$ of an essential up interval $I=(x,y)$ is of the form $(x,z)$ or $(x,z]$ with $z\geq y$,
\item the orbit $\Orb(I)$ of an essential down interval $I=(y,x)$ is of the form $(z,x)$ or $[z,x)$ with $z\leq y$
\end{itemize}
\end{lem}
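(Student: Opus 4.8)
The plan is to carry out the argument in full for an essential \emph{up} interval $I=(x,y)$ and then obtain the \emph{down} case by the obvious symmetry (reflecting $[0,1]$, i.e.\ replacing $f$ by $t\mapsto 1-f(1-t)$, which turns down intervals into up intervals and reverses their orbits). Recall that, since $I$ is an up interval, $f(t)>t$ for every $t\in I$, and that $x,y\in\Fix(f)$.

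First I would pin down the left endpoint. By the same-side rule (Lemma~\ref{lem:sharkovskii}(1)), for every $t\in I$ one has $f^n(t)>t>x$ for all $n\geq 1$, and trivially $f^0(t)=t>x$ as well; hence $\Orb(I)\subseteq(x,\infty)\cap[0,1]=(x,1]$. In particular $x\notin\Orb(I)$ and $\Orb(I)$ is bounded, so $z:=\sup\Orb(I)$ is a well-defined real number; moreover, since $I=(x,y)\subseteq\Orb(I)$, we immediately get $z\geq y$ and $\inf\Orb(I)=x$.

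The one real step is to show that $\Orb(I)$ is an \emph{interval}, and for this I would prove the inclusion $I\subseteq f(I)$. Fix $s\in(x,y)$. Choosing $t_2\in(s,y)$ gives $f(t_2)>t_2>s$; on the other hand, continuity of $f$ at $x$ together with $f(x)=x<s$ yields a point $t_1\in(x,s)$ with $f(t_1)<s$. Since $x<t_1<t_2<y$, we have $[t_1,t_2]\subseteq(x,y)$, so the intermediate value theorem produces $t\in(t_1,t_2)\subseteq I$ with $f(t)=s$; thus $s\in f(I)$. As $s\in(x,y)$ was arbitrary, $I\subseteq f(I)$. Applying $f^n$ and using that $f$ preserves inclusions, $f^n(I)\subseteq f^{n+1}(I)$ for every $n$, so $\Orb(I)=\bigcup_{n\geq 0}f^n(I)$ is an increasing union of intervals, hence an interval.

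Combining the two parts: $\Orb(I)$ is an interval whose infimum $x$ is not attained and whose supremum is $z\geq y$, so $\Orb(I)$ equals $(x,z)$ or $(x,z]$ depending on whether $z\in\Orb(I)$. The down-interval case is entirely analogous (or deduced by the reflection above), giving $(z,x)$ or $[z,x)$ with $z\leq y$. The main obstacle — a mild one — is the connectedness of $\Orb(I)$; once $I\subseteq f(I)$ is established, the rest is just bookkeeping with suprema and infima.
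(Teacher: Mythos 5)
Your proof is correct and follows essentially the same route as the paper's: establish $f(I)\supseteq I$ so that $\Orb(I)$ is an increasing union of intervals and hence an interval, then use the same-side rule to locate the left endpoint at $x$. The only difference is that you supply the intermediate-value-theorem detail for the inclusion $I\subseteq f(I)$, which the paper simply asserts.
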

\begin{proof}
Let $I=(x,y)$ be an essential up interval. Since $f(I)\supseteq I$ it follows that $\Orb(I)=\bigcup_{n=0}^\infty f^n(I)$ is the union of an increasing sequence of intervals, hence an interval. By Lemma~\ref{lem:sharkovskii} it follows that every point in $\Orb(I)$ lies to the right of $x$. This shows that $\Orb(I)$ is of the form $(x,z)$ or $(x,z]$ with $z\geq y$. The analogous statement for essential down intervals follows just as easily.
\end{proof}

\begin{prop}\label{P:chain-hor}
If an interval map $f$ of Sharkovskii type 1 has a chain of essential intervals of length $\ell\geq 1$, then some iterate of $f$ has a one-way $\ell$-horseshoe composed of pairwise disjoint closed intervals $A_1, \ldots, A_{\ell}$.
\end{prop}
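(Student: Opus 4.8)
The plan is to induct on the length $\ell$ of the chain, producing the closed intervals $A_1,\ldots,A_\ell$ inside the given essential intervals $I_1,\ldots,I_\ell$ and, simultaneously, a single iterate $f^N$ for which the whole family is a one-way horseshoe. First I would record the key geometric fact: by Lemma~\ref{lem:orb-ess}, if $I_i$ is an up interval with $\source(I_i)=a_i$, then $\Orb(I_i)=(a_i,z_i)$ or $(a_i,z_i]$, and since $\Orb(I_i)\supseteq I_{i+1}$, the interval $I_{i+1}$ (together with its closure on the non-source side) is swallowed by finitely many iterates of $I_i$ — there is $n_i$ with $f^{n_i}(I_i)$ containing a closed subinterval of $I_{i+1}$ that in turn contains any prescribed compact subinterval $A_{i+1}\subset I_{i+1}$ chosen away from $\source(I_{i+1})$ and away from the far endpoint. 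The symmetric statement holds for down intervals. So the covering relations $f^{n_i}(I_i)\supseteq \overline{A_{i+1}}$ are available for each $i<\ell$; one also needs $f^{m_i}(I_i)\supseteq \overline{A_i}$ for the ``same-level'' covers $f(A_i)\supseteq A_j$ with $i=j$, which again follows from $f(I_i)\supseteq I_i$ and Lemma~\ref{lem:orb-ess} by taking enough iterates so that the image of $I_i$ covers the closed interval $A_i$.

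Next I would choose the intervals carefully, from the top down. Pick $A_\ell$ to be a nondegenerate closed subinterval of $I_\ell$ that is bounded away from $\source(I_\ell)$ and from both endpoints of $I_\ell$; because $I_\ell$ contains no fixed points, every point of $I_\ell$ is non-wandering-free — more precisely, by Lemma~\ref{lem:sharkovskii}(3) every non-wandering point of $f$ is fixed, so $A_\ell$ consists entirely of wandering (hence non-recurrent) points, and in particular $A_\ell$ contains a non-recurrent point, as required by Definition~\ref{D:one-way}. Then, proceeding downward, for each $i$ from $\ell-1$ to $1$ choose $A_i\subset I_i$ closed, nondegenerate, bounded away from $\source(I_i)$ and from the endpoints of $I_i$, and small enough that the finitely many covering requirements involving $A_i$ can be met. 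Since the $I_i$ are distinct essential intervals they are pairwise disjoint (distinct components of the complement of $\Fix(f)$), so the $A_i$ are automatically pairwise disjoint, giving the disjointness clause of the horseshoe definition for free.

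Finally I would homogenize the iterate. Let $N$ be a common multiple (or just the product, or the sum) of all the finitely many exponents $n_i$ and $m_i$ produced above; more carefully, for a covering $f^{n}(I_i)\supseteq \overline{A_j}$ with $n\le N$ one does not immediately get $f^{N}(I_i)\supseteq \overline{A_j}$, so instead I would first fix $A_1,\ldots,A_\ell$ and then, for each ordered pair $i\le j$, use the fact that $\bigcup_{n\ge 1} f^{n}(I_i)$ eventually (and then forever, since these images are nested increasing for up intervals by $f(I_i)\supseteq I_i$, similarly for down intervals) covers $\overline{A_j}$; hence there is a single $N$ with $f^{N}(I_i)\supseteq \overline{A_j}$ for every pair $i\le j$ at once. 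Then $f^{N}(A_i)\supseteq f^{N}(I_i)\cap[\text{...}]$ — wait, one must be slightly careful because $A_i\subsetneq I_i$; instead observe $f^N(I_i)\supseteq \overline{A_j}\supseteq A_j$, and since we are free to shrink $A_i$ we in fact want $f^N(A_i)\supseteq A_j$. This is arranged by the standard device: first choose $A_j$ for all $j$; then, since $\Orb(I_i)\supseteq \overline{A_j}$ and the orbit is a nested union, there is a closed subinterval $B_i\subseteq I_i$ with $f^{N}(B_i)\supseteq A_j$ for all $j\ge i$ simultaneously (choose $B_i$ as a suitable subinterval mapped onto the needed range under $f^N$, using the intermediate value theorem on the continuous function $f^N$ restricted to $I_i$); finally replace each $A_i$ by $A_i\cap B_i$, shrinking as necessary and repeating the top-down construction until it stabilizes. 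The resulting $A_1,\ldots,A_\ell$ are pairwise disjoint closed intervals, $A_\ell$ contains a non-recurrent point, and $f^{N}(A_i)\supseteq A_j$ for all $i\le j$, so they form a one-way $\ell$-horseshoe for $f^{N}$.

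The main obstacle is the bookkeeping in this last paragraph: making the covering relations hold for one common iterate $N$ and for all pairs $i\le j$ at once, while keeping the $A_i$ genuine closed subintervals on which $f^N$ is defined and still contained in their essential intervals. The clean way around it is to first fix a large $N$ such that $f^N(I_i)\supseteq \overline{A_j}$ for all $i\le j$ (possible because each $\Orb(I_i)$ is an increasing union of intervals exhausting an interval that contains the compact set $\bigcup_{j\ge i}\overline{A_j}$), and only then carve out the actual closed subintervals $A_i$ using connectedness of $f^N(I_i)$ and the intermediate value theorem; the non-recurrence of points in $A_\ell$ is then immediate from Lemma~\ref{lem:sharkovskii}(3), and disjointness is immediate from disjointness of the $I_i$. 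Everything else is routine.
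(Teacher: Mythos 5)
There is a genuine gap, and it is fatal to the construction as written. Definition~\ref{D:one-way} requires $f(A_i)\supseteq A_j$ for all $1\le i\le j\le\ell$, which includes the diagonal case $i=j$: each $A_i$ must cover \emph{itself} under the chosen iterate. Your $A_i$ are closed subintervals of the open essential interval $I_i$, bounded away from $\source(I_i)$. But for a type~1 map the same-side rule (Lemma~\ref{lem:sharkovskii}(1)) gives, say for an up interval, $f^N(x)>x$ for every $x\in I_i$ and every $N\ge1$; hence if $A_i=[p,q]\subset I_i$ with $p>\source(I_i)$, then $\min f^N(A_i)=f^N(x^*)>x^*\ge p$ for the minimizer $x^*$, so $p\notin f^N(A_i)$ and $f^N(A_i)\not\supseteq A_i$ for \emph{any} $N$. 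No amount of shrinking, intersecting with auxiliary intervals $B_i$, or ``repeating until it stabilizes'' can repair this, because the obstruction holds for every closed interval avoiding the source. (The stabilization step is also unjustified on its own terms: replacing $A_i$ by $A_i\cap B_i$ changes the targets of all the covering relations, and you give no argument that the process terminates.) The paper's proof avoids exactly this trap by anchoring each $A_i$ \emph{at} the fixed source point, taking $A_i$ to be the closed interval from $\source(I_i)$ to the midpoint of $I_i$; then $f(A_i)\supseteq A_i$ is immediate, the images $f^n(A_i)$ form a nested increasing sequence, and a single iterate $k=\max m(i,j)$ works for all pairs. Disjointness, which you get for free, then requires a short argument (the orbit of $I_i$ lies entirely on one side of its source, so $I_j$ lies beyond $I_i$ and the half-intervals cannot meet).

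A secondary remark: your reduction of the off-diagonal covers to ``$f^N(I_i)\supseteq\overline{A_j}$, hence carve out a subinterval mapping onto $A_j$ by the intermediate value theorem'' does not by itself produce the required relation $f^N(A_i)\supseteq A_j$, since the carved-out subinterval need not be (or contain, or be contained in) the $A_i$ you already fixed, and you must satisfy all targets $j\ge i$ with one and the same set $A_i$. With the paper's source-anchored choice this issue disappears, because $\Orb(A_i)=\Orb(I_i)$ and the images of $A_i$ are nested; the only delicate point left is whether the increasing images actually reach the far endpoint of $A_j$ when $I_j$ is of the opposite type, which the paper settles by showing that otherwise the orbit of $I_i$ would stabilize before covering $I_j$, contradicting the chain hypothesis. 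Your write-up does not address that case either.
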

\begin{proof}
Let $I_1, I_2, \ldots, I_{\ell}$ be the chain of essential intervals. For each $i\in\{1,\ldots,\ell\}$, let $I_i = (a_i, b_i)$ and let $A_i$ be the closed interval joining the source of $I_i$ to its midpoint, that is,
\begin{equation}\label{Ai}
A_i=\begin{cases}
\left[a_i, \frac{a_i+b_i}{2}\right], & \text{ if $I_i$ is an up interval,}\\
\left[\frac{a_i+b_i}{2}, b_i\right], & \text{ if $I_i$ is a down interval.}
\end{cases}
\end{equation}
We will show that $A_1, \ldots, A_{\ell}$ is a one-way horseshoe for some iterate of $f$.

First we show that $A_1, \ldots, A_{\ell}$ are pairwise disjoint. Fix $1\leq i < j \leq \ell$. We may suppose that $I_i$ is an up interval (the argument for a down interval is symmetric). Then $\source(I_i)=a_i$ and by Lemma~\ref{lem:orb-ess} the orbit of $I_i$ lies to the right of $a_i$. On the other hand, $\Orb(I_i) \supseteq I_j$ by the definition of a chain of essential intervals, so $I_j \neq I_i$ must lie to the right of $b_i$. It follows from~\eqref{Ai} that $A_i \cap A_j = \emptyset$.

By Lemma~\ref{lem:sharkovskii} every point in an essential interval of $f$ is wandering, so each $A_i$ clearly contains non-recurrent points.

It remains to show that there is an iterate $k$ such that $f^k(A_i) \supseteq A_j$ for all $1\leq i \leq j\leq \ell$. It is clear from~\eqref{Ai} that $f(A_i) \supseteq A_i$ for all $i$ (and this finishes the proof in the case $\ell=1$). Then, since there are only finitely many pairs $i<j$, it suffices to find for each such pair a number $m=m(i,j)$ such that $f^m(A_i) \supseteq A_j$, and take $k=\max_{i<j} m(i,j)$. So fix a pair $i,j$ with $1\leq i < j \leq \ell$. We will show how to find $m$. We may assume that $I_i$ is an up interval (the argument for a down interval is symmetric). Again using Lemma~\ref{lem:orb-ess} and the definition of a chain of essential intervals, it follows that $\Orb(I_i)$ is an interval of the form $(a_i,z)$ or $(a_i,z]$ lying to the right of $a_i$ and containing $I_j$. Since the left endpoint of $A_i$ is fixed and the rest of $A_i$ is in the up interval $I_i$ it is easy to see that $A_i \subseteq f(A_i)$, which implies that $\Orb(A_i)=\bigcup_{n=0}^\infty f^n(A_i)$ is the union of an increasing sequence of intervals. If the right endpoint of $A_i$ never leaves $I_i$, then its trajectory is a monotone sequence and therefore converges to a fixed point. Since there are no fixed points in $I_i$ this shows that $\Orb(A_i) \supseteq I_i$. So far we have shown that
\begin{equation}\label{OrbAi}
A_i \subseteq f(A_i) \subseteq f^2(A_i) \subseteq \cdots \ \text{ and } \ \Orb(A_i)\supseteq\Orb(I_i)\supseteq I_j.
\end{equation}

We complete the proof in two cases. First suppose that $I_j$ is also an up interval. By~\eqref{OrbAi} there is $m$ such that $f^m(A_i) \ni \frac{a_j+b_j}{2}=\max(A_j)$. Since $f^m(A_i)$ is connected it follows that $f^m(A_i)\supseteq A_j$. Now suppose instead that $I_j$ is a down interval. Again by~\eqref{OrbAi} there is $m'$ such that $f^{m'}(A_i) \ni \frac{a_j+b_j}{2}=\min(A_j)$. We have that $f^{m'}(A_i)=[a_i,z']$ is a closed interval, and if $z'\geq b_j=\max(A_j)$ then we put $m=m'$ and we are finished. Otherwise $z'\in [\min(A_j),\max(A_j))$. If $f^{m'+1}(A_i)\ni\max(A_j)$ then we choose $m=m'+1$ and again we are finished. Otherwise $f^{m'+1}(A_i)=[a_i, z'']$ with $z'\leq z'' < b_j=\max(A_j)$. But $[a_i, z''] = [a_i,z'] \cup[z',z'']$ where $[z', z'']$ is a part of the down interval $I_j$. Therefore $$f^{m'+2}(A_i) = f([a_i, z']) \cup f([z', z'']) = f^{m'+1}(A_i) \cup f([z',z'']).$$ Here $[z', z'']$ is mapped to the left since it is in a down interval, but not further left than $a_i$ since it is contained in $\Orb(I_i)$. This shows that $f^{m'+2}(A_i)=f^{m'+1}(A_i)$ and thus the nested sequence in~\eqref{OrbAi} stabilizes at time $m'+1$. Therefore $$\Orb(I_i)=\Orb(A_i)=f^{m'+1}(A_i)=[a_i,z'']\not\supseteq I_j,$$ contradicting the fact that $I_1, \ldots, I_{\ell}$ is a chain of essential intervals for $f$.
\end{proof}

\begin{rem}\label{r:no-loops}	
As a corollary to the proof we see that a pair of distinct essential intervals cannot each contain the other in its orbit. Otherwise we would get closed disjoint intervals $A_1, A_2$ drawn from source to midpoint with $f^m(A_1) \supseteq A_2$ and vice-versa, forming a standard ``two-way'' horseshoe and giving $f$ periodic points of periods other than 1 (as well as positive topological entropy), which is impossible for a map of Sharkovskii type 1.	
\end{rem}

\subsection{A coding argument}\label{SS:coding}
To finish proving Theorem~\ref{T:type1} we need to show that given any positive integer $\ell$, for any interval map $f$ of Sharkovskii type 1 with $\hpol(f)>\ell-1$ we can find an $\ell$-chain of essential intervals (this is Proposition~\ref{P:ell-chain} below). Our construction follows a coding argument: each point $x$ is assigned a code, i.e.\ a sequence chosen from a finite alphabet of points in $[0,1]$ which closely shadows its orbit. The code consists predominantly of constant blocks of the form $c\cdots c$ which repeat some fixed point $c\in\Fix(f)$ several times while the orbit of $x$ is close to that fixed point. The block stops if the orbit moves far enough away from $c$, but since, by Lemma~\ref{lem:sharkovskii}, each orbit of a type 1 map eventually converges to a fixed point, each code can be chosen to eventually enter an infinite block of the form $c c \cdots$. The switching between constant blocks allows us to construct a chain of essential intervals using three technical lemmas which, loosely speaking, state the following facts:
\begin{enumerate}
\item If an orbit is close to a fixed point $c$ and then moves far away, it never returns close to $c$ again, see Lemma~\ref{lem:never-return}.
\item If an orbit is close to a fixed point $c$ and then moves far away, it must first pass through an essential interval $I$ of $f$ at a point which is close to the ``source'' of $I$ (the definition is below), while $\source(I)$ itself is close to $c$, see Lemma~\ref{lem:find-ess-int}.
\item If the sources of two essential intervals $I, J$ are far apart and an orbit travels first through $I$ and later into $J$ at a point close to $\source(J)$, then $\Orb(I)\supseteq J$, see Lemma~\ref{lem:covering}.
\end{enumerate}
Using these three facts we can show that if a code makes $\ell$ switches from one constant block to another, then $f$ has a chain of $\ell$ distinct essential intervals: their existence follows from fact (2), their distinctness from fact (1), and the covering property from fact (3). But on the other hand, a combinatorial argument shows that some codes must switch blocks at least $\ell$ times when $\hpol(f)>\ell-1$.

We now make these ideas more precise.

\begin{lem}\label{lem:2up}
Let $f$ be a map of Sharkovskii type 1. The orbits of two essential intervals of the same type (both up or both down) are either disjoint, or else one of these orbits contains the other.
\end{lem}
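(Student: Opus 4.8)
The plan is to reduce everything to the study of orbits of up intervals and use the structure theorem for these orbits from Lemma~\ref{lem:orb-ess}. So suppose $I=(x,y)$ and $J=(x',y')$ are two essential up intervals; the down case is entirely symmetric (or follows by applying the up case to the reflected map $t\mapsto 1-f(1-t)$). By Lemma~\ref{lem:orb-ess}, $\Orb(I)$ is an interval of the form $(x,z)$ or $(x,z]$ with $z\geq y$, and similarly $\Orb(J)$ has left endpoint $x'$. The key observation is that each of these orbit-intervals is forward invariant: $f(\Orb(I))\subseteq\Orb(I)$, since $\Orb(I)=\bigcup_{n\geq0}f^n(I)$ and applying $f$ just shifts the union. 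Moreover its left endpoint $x$ is a fixed point of $f$ (it is the source of the up interval $I$, hence lies in $\Fix(f)$), and likewise $x'\in\Fix(f)$.

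The heart of the argument is then the following dichotomy for the two left endpoints. If $x=x'$, I claim one orbit contains the other: both $\Orb(I)$ and $\Orb(J)$ are intervals with the same left endpoint $x$, so they are nested (two intervals sharing an endpoint are always comparable by inclusion, up to the behaviour at the two right endpoints). The only subtlety is whether the right endpoint is included, but since the larger right endpoint $z$ dominates, $\Orb(I)$ and $\Orb(J)$ are comparable regardless. If instead $x\neq x'$, I claim the orbits are disjoint. Suppose not; since both are intervals, their union would be an interval, and in particular one left endpoint, say $x'$, would lie in the interior of $\Orb(I)=(x,z)$ (or $(x,z]$); but then, since $x'$ is a fixed point of $f$ and $x$ is a fixed point of $f$ with $x<x'<z$, the interval $(x,x')$ would be an essential interval (or contain one) \emph{inside} $\Orb(I)$ whose points lie strictly to the left of $x'$ — and this is where I need to be careful: I should instead argue directly that $x'\in\Orb(I)$ is impossible. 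Indeed $\Orb(I)$ is forward invariant and $x'$ is fixed, so $x'\in\Orb(I)$ would force $x'$ to be the image of some point in $I$ or in an iterate, giving a preimage chain; more cleanly, $x'\in\Orb(J)$ always (it is the left endpoint, hence a limit of points of $J$, and actually $x'=\lim f^n(w)$ for $w\in J$ by Lemma~\ref{lem:sharkovskii}(2), so $x'\in\overline{\Orb(J)}$), while if also $x'\in\Orb(I)$, then since $\Orb(I)$ is forward invariant and open on the left at $x$, the whole orbit of $x'$ stays in $\Orb(I)$, which contradicts nothing by itself — so the truly clean route is: if $\Orb(I)\cap\Orb(J)\neq\emptyset$ then $\Orb(I)\cup\Orb(J)$ is a single interval $K$, and $\Orb(K)=K$; its left endpoint is $\min(x,x')$, say $x$; but then $x'$ is an \emph{interior} fixed point of the forward-invariant interval $K$, and the subinterval of $K$ to the left of $x'$ is then mapped into itself and its right endpoint $x'$ is fixed — actually the point is simply that $I$ is an essential \emph{up} interval so every point of $I$ maps strictly right, hence $\Orb(J)\supseteq J$ has all points $>x'$, so if $x<x'$ then $J\subseteq\Orb(I)$ forces, via $\Orb(I)\supseteq\Orb(J)$ (taking orbits of both sides of $J\subseteq\Orb(I)$ and using forward invariance of $\Orb(I)$), that one contains the other after all. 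Let me state it that way: once $\Orb(I)\cap\Orb(J)\neq\emptyset$, whichever left endpoint is smaller, say $x\leq x'$, we get $J\subseteq K=\Orb(I)$ would hold if $x'\in\Orb(I)$, and then $\Orb(J)=\bigcup f^n(J)\subseteq\bigcup f^n(\Orb(I))=\Orb(I)$. So the conclusion "one contains the other" holds whenever the orbits meet, and otherwise they are disjoint — which is exactly the statement.

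I expect the main obstacle to be the careful handling of the endpoints — specifically, verifying that if the two orbit-intervals overlap then the left endpoint of the smaller-left-endpoint one is genuinely $\leq$ everything in the other orbit, so that the inclusion of closures is an actual inclusion of the orbit sets (including or excluding right endpoints consistently). The cleanest way to sidestep the open/closed bookkeeping is to work with the closures $\overline{\Orb(I)}$, which by Lemma~\ref{lem:orb-ess} are genuine closed intervals $[x,z]$, prove the dichotomy (disjoint interiors versus nested) at the level of closures, and then transfer back using that $\Orb(I)$ and $\Orb(J)$ differ from their closures only at right endpoints while sharing their (fixed-point) left endpoints — the case distinction $x=x'$ versus $x\neq x'$ then reads off immediately, and "disjoint" in the lemma means disjoint as the open-ended orbit sets, which follows since distinct left endpoints force the closed intervals to overlap in at most a point, hence the orbit sets not at all.
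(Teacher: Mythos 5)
There is a genuine gap, and it sits exactly at the one non-trivial point of the lemma. Write $I=(x,y)$, $J=(x',y')$ for the two up intervals with, say, $x<x'$ (so $y\le x'$, since distinct essential intervals are disjoint), and let $z=\sup\Orb(I)$, so that $\Orb(I)=(x,z)$ or $(x,z]$ by Lemma~\ref{lem:orb-ess}. If the two orbits meet, then $z>x'$, and your argument then needs $J\subseteq\Orb(I)$ in order to conclude $\Orb(J)=\bigcup_n f^n(J)\subseteq\Orb(I)$ from forward invariance of $\Orb(I)$. But $J\subseteq\Orb(I)$ requires $z\ge y'$, and nothing in your proposal rules out the partial-overlap configuration $x'<z<y'$, i.e.\ $z\in J$: knowing that $x'\in\Orb(I)$ says nothing about how far to the right $\Orb(I)$ reaches. (Your closing remark that ``distinct left endpoints force the closed intervals to overlap in at most a point'' is false for intervals in general and is precisely the dichotomy to be proved, not a means of proving it; likewise your earlier claim that $x\neq x'$ forces disjointness is false, as you yourself note, since nested orbits have distinct left endpoints.)

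The missing ingredient is the observation the paper's proof turns on: the supremum $z$ of the forward-invariant set $\Orb(I)$ cannot lie in an \emph{up} interval. Indeed, if $z\in J$, then $z$ itself (if $z\in\Orb(I)$), or points of $\Orb(I)\cap J$ arbitrarily close to $z$ from below, are mapped strictly to the right, and by continuity their images exceed $z$, contradicting $f(\Orb(I))\subseteq\Orb(I)$. Hence either $z\le x'$ (the orbits are disjoint) or $z\ge y'$ (whence $J\subseteq\Orb(I)$ and your forward-invariance argument does finish the proof). Note that this step is the only place where the hypothesis that $I$ and $J$ have the \emph{same} type is used; your argument never invokes that hypothesis in any essential way, which is a warning sign, because the dichotomy genuinely fails for intervals of mixed type: the orbit of an up interval may terminate strictly inside a down interval to its right without containing it, which is exactly the exceptional situation handled separately in the proof of Lemma~\ref{lem:covering}.
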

\begin{proof}
Let $I,J$ be two essential intervals and suppose without loss of generality that both $I,J$ are up intervals and $I$ lies to the left of $J$. By Lemma~\ref{lem:orb-ess} $\Orb(I)$ is of the form $(x,z)$ or $(x,z]$, where $x$ is the left endpoint of $I$. Since $\Orb(I)$ is invariant, $z$ cannot lie in an up interval, so $z\not\in J$. If $z$ lies to the left of $J$, then $\Orb(I)\cap\Orb(J)=\emptyset$. But if $z$ lies to the right of $J$, then $\Orb(I)\supseteq\Orb(J)$.
\end{proof}

\begin{lem}\label{lem:never-return}
Let $f$ be a map of Sharkovskii type 1. For any $\epsilon>0$ there exists $\delta>0$ such that for $x\in[0,1]$, $c\in\Fix(f)$, $t\in\N$, if $|x-c|<\delta$ and $|f^t(x)-c|\geq\epsilon$, then $|f^m(x)-c|\geq\delta$ for all $m\geq t$.
\end{lem}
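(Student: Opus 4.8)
The plan is to prove the lemma directly, choosing $\delta$ in advance and then using the same-side rule (Lemma~\ref{lem:sharkovskii}(1)) to confine the tail of the orbit of $x$. First, since $f$ is uniformly continuous on the compact interval $[0,1]$ and $f(c)=c$ for each $c\in\Fix(f)$, I would pick $\delta_0>0$ such that $|y-c|<\delta_0$ implies $|f(y)-c|<\epsilon$ for every $y\in[0,1]$ and every $c\in\Fix(f)$ (this $\delta_0$ is automatically uniform over the fixed points, which is where the uniformity in the statement comes from), and set $\delta=\min\{\delta_0,\epsilon\}$.

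Next I would make a reduction. Fix $x,c,t$ as in the statement. Since $|f^t(x)-c|\geq\epsilon$ while $|x-c|<\delta\leq\epsilon$, the point $x$ is not fixed, so by the same-side rule all its images $f^n(x)$ ($n\geq1$) lie on one fixed side of $x$; assume they lie to the right, i.e.\ $f(x)>x$ (the left case is symmetric, and the endpoints $0,1$ cause no trouble). Then $f^t(x)>x>c-\delta\geq c-\epsilon$, which together with $|f^t(x)-c|\geq\epsilon$ forces $f^t(x)\geq c+\epsilon$; so it suffices to show $f^m(x)\geq c+\delta$ for all $m\geq t$.

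Then I would split into two cases. If $(f^n(x))_{n\geq0}$ is monotone nondecreasing, then $f^m(x)\geq f^t(x)\geq c+\epsilon\geq c+\delta$ for all $m\geq t$ and we are done. Otherwise, let $N$ be the least index with $f^{N+1}(x)<f^N(x)$; one checks that $N\geq1$ and that the orbit strictly increases on $\{0,\dots,N\}$, so $f^N(x)=\max_{n\geq0} f^n(x)$. Applying the same-side rule at $f^{N-1}(x)$ (whose first image $f^N(x)$ lies strictly to its right) and at $f^N(x)$ (whose first image $f^{N+1}(x)$ lies strictly to its left) traps the tail:
\begin{equation*}
f^{N-1}(x)<f^m(x)\leq f^N(x)\qquad\text{for all }m\geq N.
\end{equation*}
In particular $f^N(x)\geq f^t(x)\geq c+\epsilon$, so $f^N(x)=f(f^{N-1}(x))$ is not within $\epsilon$ of $c$; by the choice of $\delta_0$ this forces $|f^{N-1}(x)-c|\geq\delta_0$, and since $f^{N-1}(x)\geq x>c-\delta_0$ we conclude $f^{N-1}(x)\geq c+\delta_0\geq c+\delta$. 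Hence $f^m(x)>c+\delta$ for all $m\geq N$, while $f^m(x)\geq f^t(x)\geq c+\delta$ for $t\leq m\leq N$ (when $t\leq N$); together these give $|f^m(x)-c|\geq\delta$ for all $m\geq t$.

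The main obstacle is the non-monotone case: one must invoke the same-side rule \emph{simultaneously} at the turning index $N$ and at its predecessor $N-1$ to confine the entire tail of the orbit inside $(f^{N-1}(x),f^N(x)]$, and only after this confinement can continuity of $f$ at $c$ be leveraged to push $f^{N-1}(x)$ a definite distance $\delta_0$ away from $c$. An alternative route would be a compactness/contradiction argument (take $\delta_n=1/n$, extract convergent subsequences of the putative counterexamples, and derive a contradiction from continuity of $f$ at the limiting fixed point together with the same-side rule), but the direct argument above seems cleaner and avoids the need to control the time indices $t_n,m_n$.
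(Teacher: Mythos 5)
Your proof is correct and rests on the same two ingredients as the paper's: uniform continuity of $f$ at the fixed point (to choose $\delta$ so that any point whose image is $\epsilon$-far from $c$ is itself $\delta_0$-far from $c$) and the same-side rule of Lemma~\ref{lem:sharkovskii} (to confine the tail of the orbit to the right of such a point). The only real difference is the anchor point: the paper applies the same-side rule at $f^{t-1}(x)$, relying on the chain $x<c+\delta\leq f^{t-1}(x)<f^t(x)$ (which is cleanest when $t$ is the first exit time from $B(c,\epsilon)$), whereas you anchor at the orbit's turning index $N$ and at $f^{N-1}(x)$, which handles an arbitrary $t$ directly; both routes are sound.
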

\begin{proof}
Use the uniform continuity of $f$ to choose a positive number $\delta<\epsilon$ such that points which are $\delta$-close have images which are $\epsilon$-close. Then since $c$ is fixed, the point $f^{t-1}(x)$ cannot belong to the ball $B(c,\delta)$. Without loss of generality we may suppose that $f^t(x)\geq c+\epsilon$. By Lemma~\ref{lem:sharkovskii} the points $f^{t-1}(x), f^t(x)$ both lie on the same side of $x$, so we have $x<c+\delta\leq f^{t-1}(x)<f^t(x)$. Now by Lemma~\ref{lem:sharkovskii} applied to the point $f^{t-1}(x)$, each $f^m(x)$ with $m\geq t$ lies to the right of $f^{t-1}(x)$.
\end{proof}

If $I, J\in \I$ and $\Orb(I)$ intersects $J$, then it need not be the case that $\Orb(I) \supseteq J$. However, we have the following.

\begin{lem}\label{lem:covering}
Let $f$ be a map of Sharkovskii type 1 and $\I$ its set of essential intervals. For any $\epsilon>0$ there exists $\delta>0$ such that for all $I,J\in\I$, if $\Orb(I)$ contains a point $x\in J$ with $|x-\source(J)|<\delta$, then either $\Orb(I)\supseteq J$ or $|\source(I)-\source(J)|<\epsilon$.
\end{lem}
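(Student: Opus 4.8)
The plan is to argue by uniform continuity, reducing the statement to a compactness argument over the essential intervals that are ``large enough'' to matter. First I would dispose of a cheap case: if $I=J$ then $\source(I)=\source(J)$ and the second alternative holds trivially, so assume $I\neq J$. By Lemma~\ref{lem:2up} and Remark~\ref{r:no-loops}, if $I$ and $J$ have the same type then either $\Orb(I)$ and $\Orb(J)$ are disjoint or $\Orb(I)\supseteq\Orb(J)\supseteq J$; in the latter case the first alternative holds, and in the former case $\Orb(I)$ cannot meet $J$ at all, so the hypothesis is vacuous. Hence the only interesting configuration is when $I$ and $J$ have \emph{opposite} types. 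Say $J=(c,b)$ is a down interval with $\source(J)=b$ (the up case is symmetric), and $I$ is an up interval, $I=(a,y)$ with $\source(I)=a$, so by Lemma~\ref{lem:orb-ess} $\Orb(I)=(a,z)$ or $(a,z]$ with $z\geq y$.

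The key observation is this: since $\Orb(I)$ is an interval with left endpoint $a$ containing a point $x\in J$ near $b=\source(J)$, it already contains the entire half-open interval $(a,x]$. If $a\leq c$, then $(a,x]\supseteq(c,x]$, and I would like to conclude $\Orb(I)\supseteq J=(c,b)$; for this it suffices to push $x$ all the way to $b$, which is exactly where the $\delta$ and uniform continuity enter. More precisely, I would fix $\epsilon>0$ and choose $\delta>0$ (depending only on $f$ and $\epsilon$, via uniform continuity of $f$ applied finitely many times — see below) so small that whenever $\Orb(I)$ reaches a point within $\delta$ of $b$ inside the down interval $J$, one more application of $f$ (or a bounded number of them) carries that point leftward but, since $f(x)<x$ on $J$ and $\Orb(I)$ is $f$-invariant, the image stays in $\Orb(I)$; combined with the fact that $\Orb(I)$ is a left-anchored interval, a short monotonicity argument like the one in the proof of Proposition~\ref{P:chain-hor} (the ``stabilization'' paragraph) shows that either $\Orb(I)$ already contains $b$ in its closure — giving $\Orb(I)\supseteq(a,b)\supseteq J$ since $a\leq c$ — or $\Orb(I)$ stabilizes strictly inside $J$, which is impossible because then $\Orb(I)$ would have a right endpoint lying in the up-free\footnote{strictly, in the down interval} region $J$, and by Lemma~\ref{lem:sharkovskii} the trajectory of that endpoint would be forced monotonically to a fixed point inside $J$, contradicting $J\cap\Fix(f)=\emptyset$. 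So when $a\leq c$ we get $\Orb(I)\supseteq J$ outright, with no need for the $\epsilon$ at all. The remaining possibility is $a>c$, i.e.\ $\source(I)=a$ lies strictly between $c$ and $b$; then $|\source(I)-\source(J)|=|a-b|$, and since $a\in(c,b)$ while $x\in\Orb(I)$ is within $\delta$ of $b$ and lies to the right of $a$ (as $\Orb(I)=(a,z)$ or $(a,z]$), we get $a<x<b$ hence $0<b-a=\,|\source(I)-\source(J)|$; but also $b-a<b-x+x-a$, and… this is where uniformity is genuinely needed, so I would instead argue as follows.

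For the quantitative part I would set things up so that $\delta$ is extracted by compactness. The pairs $(\source(I),\source(J))$ range over a subset of $\Fix(f)\times\Fix(f)$, which is compact. Suppose the conclusion fails for $\epsilon$: then there are sequences $I_n,J_n\in\I$ with $\Orb(I_n)$ meeting $J_n$ at a point within $1/n$ of $\source(J_n)$, yet $\Orb(I_n)\not\supseteq J_n$ and $|\source(I_n)-\source(J_n)|\geq\epsilon$. By the first two paragraphs we may pass to a subsequence on which all $I_n$ are up intervals and all $J_n$ are down (or vice versa), and on which $\source(I_n)\to p$, $\source(J_n)\to q$ with $|p-q|\geq\epsilon$; writing $I_n=(a_n,y_n)$, $J_n=(c_n,b_n)$ we have $a_n\to p$ and $b_n\to q$. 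Since $\Orb(I_n)=(a_n,z_n]$ (or open) contains a point within $1/n$ of $b_n$ and is anchored at $a_n$, and since $\Orb(I_n)\not\supseteq J_n$ forces $z_n\le b_n$ hence $z_n$ to accumulate at $q$ from within $J_n$, I can run the stabilization argument of Proposition~\ref{P:chain-hor} to derive, for all large $n$, that $\Orb(I_n)$'s right endpoint $z_n$ is a fixed point — contradiction, since $z_n$ lies in the open down interval $J_n$ which contains no fixed points. The only way to evade this contradiction is $a_n> c_n$ eventually with $a_n\in J_n$, but then $p=\lim a_n\in[q',q]$ for the limit $c_n\to q'$, while the point within $1/n$ of $b_n$ that lies in $\Orb(I_n)$ sits to the right of $a_n$, forcing $a_n$ itself to be within $o(1)+|b_n - \text{(that point)}|$, i.e. within $o(1)$, of $b_n$; hence $|p-q|=\lim|a_n-b_n|=0<\epsilon$, the desired contradiction.

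\medskip

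\noindent\textbf{Main obstacle.} The genuine difficulty is the interplay between the two alternatives: one must show that the \emph{only} obstruction to $\Orb(I)\supseteq J$ — given that $\Orb(I)$ already pierces $J$ deep near $\source(J)$ — is that $\source(I)$ itself sits inside $J$ close to $\source(J)$; equivalently, that $\Orb(I)$ cannot ``peter out'' somewhere strictly between $\source(I)$ and $\source(J)$. This is exactly the stabilization phenomenon analyzed in Proposition~\ref{P:chain-hor}, and I expect the cleanest route is to invoke that analysis (or re-derive it in a sentence): an $f$-invariant interval anchored at a fixed point whose other endpoint lies in an essential interval must have that endpoint pinned to a fixed point, which is absurd inside an essential interval. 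Getting the $\delta$ to be \emph{uniform} over all pairs $I,J$ is then a routine compactness wrap-up, but it does rely on the fact (used above) that passing to a convergent subsequence of sources still lets us run the monotonicity argument, which in turn uses only uniform continuity of $f$ and the same-side rule — no dependence on the particular intervals.
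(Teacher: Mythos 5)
Your argument hinges on a claim that is false, and it is false in precisely the configuration that makes the lemma nontrivial. Both in the ``$a\leq c$'' case and in the ``Main obstacle'' paragraph you assert that a forward-invariant interval anchored at a fixed point cannot have its other endpoint strictly inside an essential down interval $J$, because that endpoint would be ``forced monotonically to a fixed point inside $J$.'' But forward invariance of $\Orb(I)=(a,z]$ only requires $f(z)\in(a,z]$; since $z$ lies in a down interval we have $f(z)<z$, and the trajectory of $z$ drifts \emph{left}, converging to a fixed point that is at most the left endpoint $c$ of $J$ --- a point \emph{outside} $J$. Nothing pins $z$ to a fixed point. Concretely, take $f$ with fixed points $0,\tfrac12,\tfrac35,1$, lying above the diagonal on $(0,\tfrac12)$ with $\max f([0,\tfrac12])=0.55$, mapping $[\tfrac12,\tfrac35]$ into $[\tfrac12,0.55]$ below the diagonal, and equal to the identity on $[\tfrac35,1]$; this is of Sharkovskii type $1$, and $I=(0,\tfrac12)$ is an up interval with $\Orb(I)=(0,0.55]$ terminating strictly inside the down interval $J=(\tfrac12,\tfrac35)$ even though $\source(I)=0\leq\tfrac12$. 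So ``when $a\leq c$ we get $\Orb(I)\supseteq J$ outright'' is wrong. (The stabilization step of Proposition~\ref{P:chain-hor} that you invoke yields a contradiction only because there one \emph{assumes} $\Orb(I_i)\supseteq I_j$ as part of the chain hypothesis; it does not show that stabilization inside a down interval is impossible per se.) The paper explicitly identifies this configuration as the whole difficulty: when $\sup\Orb(I)$ lands inside a down interval $J$, one must arrange $\delta<|\sup\Orb(I)-\source(J)|$.

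Since the same false step powers your compactness wrap-up (``derive \dots that $\Orb(I_n)$'s right endpoint $z_n$ is a fixed point --- contradiction''), that paragraph does not close the argument either, and the genuinely hard content is absent: one must show that the termination points $z(I)$, taken over all essential intervals $I$ whose orbits have diameter at least $\epsilon/2$, form a \emph{finite} set. The paper does this by supposing there are infinitely many such $I_n$, using Lemma~\ref{lem:2up} to nest their orbits after passing to a monotone subsequence of sources, and then observing that $f$ would have to map the shrinking gaps between consecutive sources onto intervals of length bounded below by a constant, contradicting uniform continuity; only with that finiteness in hand can $\delta$ be chosen below every relevant distance $|z(I)-\source(J)|$. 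Your preliminary reductions (the $I=J$ and same-type cases) are essentially fine, and your observation that $\source(I)$ cannot lie strictly inside $J$ is correct --- indeed it makes your ``$a>c$'' subcase vacuous, since a fixed point cannot sit in an essential interval --- but these only set the stage; the finiteness/uniform-continuity argument is the missing idea, and without it the proof does not go through.
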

\begin{proof}
If $I$ is an essential up interval, then by Lemma~\ref{lem:2up}, $\Orb(I)$ contains every essential up interval $J$ that it meets. If additionally the right endpoint $z$ of $\Orb(I)$ is a fixed point of $f$ or does not belong to any essential interval (i.e.\ it is outside the convex hull of the fixed points), then $\Orb(I)$ contains every essential interval that it meets, and there is nothing to prove. But if $z$ belongs to an essential down interval $J$, then $\Orb(I)$ does not contain $J$, so if $|\source(I)-\source(J)|\geq\epsilon$ then we must take care to choose $\delta<|z-\source(J)|$.

This motivates us to define a function $z:\I\to[0,1]$ by the rule
\begin{equation*}
z(I):=\begin{cases}
\sup(\Orb(I)), & \text{ if $I\in\I$ is an essential up interval},\\
\inf(\Orb(I)), & \text{ if $I\in\I$ is an essential down interval}.
\end{cases}
\end{equation*}
Now let $\I'\subseteq\I$ be the subset of those essential intervals whose orbits have diameter greater than or equal to $\epsilon/2$. We will show that the set $z(\I')$ is finite. Then it suffices to take a positive number $\delta<\epsilon/2$ which is less than the minimum of the finite set 
\begin{equation*}
\{|z(I)-\source(J)| ~:~ I\in\I', J\in\I, z(I)\in J\}
\end{equation*}
(it is a set of positive numbers, because $\source (J) \notin J$ by definition).
Then for two intervals $I,J\in\I$, if $\Orb(I)$ does not cover $J$ but contains a point in $J$ which is $\delta$-close to its source, then $z(I)\in J$, $|z(I)-\source(J)|<\delta<\frac\epsilon2$, and $I\not\in\I'$ by the definition of $\delta$. Therefore $|\source(I)-z(I)|<\frac\epsilon2$ and by the triangle inequality $|\source(I)-\source(J)|<\epsilon$.

It remains to prove that $z(\I')$ is finite. We give a proof by contradiction. Suppose $z(\I')$ is infinite. Without loss of generality we may suppose that $\I'$ contains infinitely many up intervals $I_n$, $n\in\mathbb{N}$, such that the points $z_n=z(I_n)$ are all distinct. Clearly the points $x_n=\source(I_n)$ are also all distinct. After passing to a subsequence we may assume that the numbers $x_n$ converge to some limit point $x\in[0,1]$ monotonically from one side. Then by passing to a tail of the sequence we may assume that $|x_n - x| < \epsilon/2$ for all $n$. Since the sets $\Orb(I_n)$ are all intervals of diameter at least $\epsilon/2$ with the points $x_n$ as their respective left endpoints, it follows that $\Orb(I_n)\cap\Orb(I_m)\neq\emptyset$ for $m,n\in\mathbb{N}$. Then by Lemma~\ref{lem:2up} the sets $\Orb(I_n)$ form a nested sequence: if $x_n\nearrow x$ is increasing, then $\Orb(I_1)\supset\Orb(I_2)\supset\cdots$ is decreasing, and conversely if $x_n\searrow x$ is decreasing, then $\Orb(I_1)\subset\Orb(I_2)\subset\cdots$ is increasing, see Figure~\ref{fig:2cases}. We address these two cases separately.

\begin{figure}[htb!!]
\begin{tikzpicture}
\draw[fill] (0,0) circle [radius=0.08] node[below=3pt] {$x_1$};
\draw[fill] (1,0) circle [radius=0.08] node[below=3pt] {$x_2$};
\draw[fill] (1.6,0) circle [radius=0.08] node[below=3pt] {$x_3$};
\draw[fill] (1.9,0) circle [radius=0.02];
\draw[fill] (2.0,0) circle [radius=0.02];
\draw[fill] (2.1,0) circle [radius=0.02];
\draw[fill] (2.3,0) circle [radius=0.08] node[below=3pt] {$x$};
\draw[fill] (7.7,0) circle [radius=0.08] node[below=3pt] {$z$};
\draw[fill] (7.9,0) circle [radius=0.02];
\draw[fill] (8.0,0) circle [radius=0.02];
\draw[fill] (8.1,0) circle [radius=0.02];
\draw[fill] (8.4,0) circle [radius=0.08] node[below=3pt] {$z_3$};
\draw[fill] (9.0,0) circle [radius=0.08] node[below=3pt] {$z_2$};
\draw[fill] (10,0) circle [radius=0.08] node[below=3pt] {$z_1$};
\draw[thick,gray,<->] (2.5,0) -- node[black,above] {\scriptsize$\geq\epsilon/2$} (7.5,0);
\node at (5,-1) {-- or --};
\begin{scope}[shift={(0,-2)}]
\draw[fill] (2.3,0) circle [radius=0.08] node[below=3pt] {$x_1$};
\draw[fill] (1.3,0) circle [radius=0.08] node[below=3pt] {$x_2$};
\draw[fill] (0.7,0) circle [radius=0.08] node[below=3pt] {$x_3$};
\draw[fill] (0.4,0) circle [radius=0.02];
\draw[fill] (0.3,0) circle [radius=0.02];
\draw[fill] (0.2,0) circle [radius=0.02];
\draw[fill] (0,0) circle [radius=0.08] node[below=3pt] {$x$};
\draw[fill] (10,0) circle [radius=0.08] node[below=3pt] {$z$};
\draw[fill] (9.8,0) circle [radius=0.02];
\draw[fill] (9.7,0) circle [radius=0.02];
\draw[fill] (9.6,0) circle [radius=0.02];
\draw[fill] (9.3,0) circle [radius=0.08] node[below=3pt] {$z_3$};
\draw[fill] (8.7,0) circle [radius=0.08] node[below=3pt] {$z_2$};
\draw[fill] (7.7,0) circle [radius=0.08] node[below=3pt] {$z_1$};
\draw[thick,gray,<->] (2.5,0) -- node[black,above] {\scriptsize$\geq\epsilon/2$} (7.5,0);
\end{scope}
\end{tikzpicture}
\caption{Nested orbits of essential up intervals: two cases}\label{fig:2cases}
\end{figure}
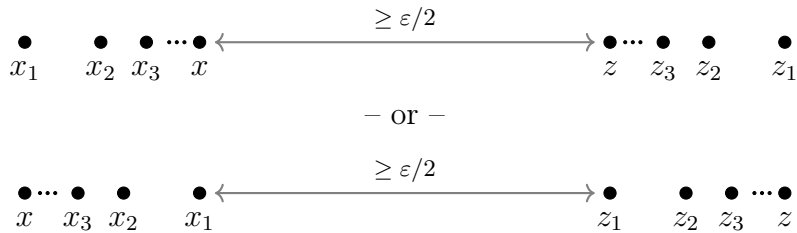

First suppose $x_n\nearrow x$ is increasing. Then the numbers $z_n$ form a decreasing sequence and hence converge to their infimum $z=\inf z_n$. Since each $I_n \in \I'$ we get $|z-x| \geq \epsilon/2$. Note that each $x_n$ is a fixed point of $f$ and so $(x_n,x_{n+1})$ contains the essential interval $I_n$. It follows that $\Orb((x_n,x_{n+1})) \supseteq \Orb(I_n) \supseteq (z_{n+1},z_n)$. But the set $[x_{n+1},z_{n+1}]=\overline{\Orb(I_{n+1})}$ is invariant for $f$. Therefore $f((x_n,x_{n+1}))$ must contain $(z_{n+1},z_n)$ for all $n$. Since the points $x_n$ are fixed points this violates the uniform continuity of $f$.

Now suppose $x_n\searrow x$ is decreasing. Then the numbers $z_n$ form an increasing sequence and hence converge to their supremum $z=\sup z_n$. Since $I_1\in \I'$ we have $|z_1-x_1| \geq \epsilon/2$. The rest of the proof is analogous to the previous case: we observe that $(x_{n+1},x_n)$ contains $I_{n+1}$ and $[x_n,z_n]$ is invariant, so that $f((x_{n+1},x_n))$ must contain $(z_n,z_{n+1})$ for all $n$, leading to a contradiction with the fact that the points $x_n$ are fixed and $f$ is uniformly continuous.
\end{proof}

\begin{lem}\label{lem:find-ess-int}
Let $f$ be a map of Sharkovskii type 1 and $\I$ its set of essential intervals. For every $\epsilon>0$ there exists $\delta>0$ such that for $x\in[0,1]$, $c\in\Fix(f)$, $t\in\N$, if $|x-c|<\delta$ and $|f^t(x)-c|\geq\epsilon$, then there exist $I\in\I$ and $\tau\in\Nzero$, $\tau<t$, such that $f^\tau(x)\in I$ and $|\source(I)-c|<\epsilon$ and $|\source(I)-f^\tau(x)|<\epsilon$.
\end{lem}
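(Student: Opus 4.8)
The plan is to extract the required $\delta$ from the uniform continuity of the \emph{displacement} $g(z):=f(z)-z$, whose zero set is exactly $\Fix(f)$. Given $\epsilon>0$, I would first choose $\delta\in(0,\epsilon)$ so small that $|z-z'|<\delta$ forces $|g(z)-g(z')|<\epsilon$; since $\Fix(f)$ is compact and $g$ vanishes on it, this guarantees $|g(z)|<\epsilon$ whenever $z$ lies within $\delta$ of $\Fix(f)$. Now take $x,c,t$ as in the statement and assume, without loss of generality, that $f^{t}(x)\geq c+\epsilon$ (the case $f^{t}(x)\leq c-\epsilon$ being symmetric, with up intervals and left endpoints replaced by down intervals and right endpoints). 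Write $y_{j}:=f^{j}(x)$. Since $y_{0}=x$ is not fixed (because $|x-c|<\delta<\epsilon\leq|f^{t}(x)-c|$) and some forward image, namely $y_{t}$, exceeds $y_{0}$, the same-side rule of Lemma~\ref{lem:sharkovskii} gives $y_{j}>y_{0}>c-\delta$ for every $j\geq 0$.

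The next step is to pick the right time, namely $\tau:=\max\{\,j\in\{0,\dots,t-1\}:y_{j}<c+\epsilon\,\}$, which is well defined because $y_{0}=x<c+\delta<c+\epsilon$. By maximality $y_{\tau+1}\geq c+\epsilon$, so $g(y_{\tau})=y_{\tau+1}-y_{\tau}\geq c+\epsilon-y_{\tau}>0$; in particular $y_{\tau}\notin\Fix(f)$. The key step is the claim $y_{\tau}\geq c$: if instead $y_{\tau}<c$, then $y_{\tau}\in(c-\delta,c)$ yet $g(y_{\tau})\geq c+\epsilon-y_{\tau}>\epsilon$, contradicting the fact that $y_{\tau}$ lies within $\delta$ of the fixed point $c$. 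Hence $c\leq y_{\tau}<c+\epsilon$.

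It then remains to place $y_{\tau}$ in a suitable essential interval. First one rules out $y_{\tau}>\max\Fix(f)$: otherwise $y_{\tau}$ would be a non-fixed point with $f(y_{\tau})>y_{\tau}$, so by the same-side rule its whole forward orbit would stay above $y_{\tau}$, and its limit (which exists by Lemma~\ref{lem:sharkovskii}(2)) would be $\geq y_{\tau}>\max\Fix(f)$ and hence not fixed, a contradiction. Therefore $\min\Fix(f)\leq c\leq y_{\tau}<\max\Fix(f)$ with $y_{\tau}\notin\Fix(f)$, so $y_{\tau}$ belongs to a (unique) essential interval $I=(a,b)$. Because $g$ is continuous and nonzero on the connected set $I$ it has constant sign there, which is positive since $g(y_{\tau})>0$; thus $I$ is an up interval and $\source(I)=a$. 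Finally $a\geq c$: if $a<c$ then, as $c\in\Fix(f)$ cannot lie in $I$, we would get $b\leq c$ and hence $y_{\tau}<b\leq c$, contradicting $y_{\tau}\geq c$. So $c\leq a<y_{\tau}<c+\epsilon$, which yields $|\source(I)-c|<\epsilon$ and $|\source(I)-y_{\tau}|<\epsilon$; together with $\tau<t$ and $f^{\tau}(x)=y_{\tau}\in I$ this is precisely the conclusion.

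I expect the main obstacle to be controlling the source of the essential interval: a priori the essential interval through $y_{\tau}$ could have its source arbitrarily far from $c$. The device that defeats this is the claim $y_{\tau}\geq c$ above, which turns the information that $f$ displaces $y_{\tau}$ by more than $\epsilon$ into the statement that $y_{\tau}$ is bounded away from the fixed point $c$ -- impossible by the choice of $\delta$. Once $y_{\tau}\geq c$ is in hand, the left endpoint (= source) of the up interval containing $y_{\tau}$ is automatically squeezed into $[c,c+\epsilon)$, and everything else is elementary bookkeeping with the geometry of essential intervals.
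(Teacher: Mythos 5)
Your proof is correct and follows essentially the same route as the paper's: both extract $\delta$ from uniform continuity, use the same-side rule to confine the orbit, and identify the last time the orbit sits below $c+\epsilon$ as the moment it must lie in an essential up interval whose source is squeezed into $[c,c+\epsilon)$. The only differences are cosmetic --- you apply uniform continuity to the displacement $f(z)-z$ rather than to $f$ itself, and you define $\tau$ directly as a maximum instead of normalizing $t$ to be minimal --- so nothing further is needed.
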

\begin{proof}
Use the uniform continuity of $f$ to find a positive number $\delta<\epsilon$ such that points which are $\delta$-close have images which are $\epsilon$-close. We may suppose without loss of generality that $c-\delta<x<c$ and that $t\in\N$ is the smallest natural number with $|f^t(x)-c|\geq\epsilon$. Since $c$ is fixed the point $f^{t-1}(x)$ cannot belong to the ball $B(c,\delta)$, and in particular we have $t\geq2$.

Let $J$ be the connected component of $[0,1]\setminus\Fix(f)$ containing $x$. We do not yet know if $J$ is an essential interval, since we do not know if its left endpoint is fixed. However, if each point in $J$ moves to the left, then its left endpoint must be fixed, so $J$ is an essential down interval and $x<\source(J)<c$. In this case we may take $I=J$ and $\tau=0$ and we are finished.

Henceforth we suppose that each point in $J$ moves to the right. Then by Lemma~\ref{lem:sharkovskii} the whole orbit of $x$ lies to the right of $x$, so we must have
\begin{equation*}
c+\delta\leq f^{t-1}(x) < c+\epsilon \leq f^t(x),
\end{equation*}
where the middle inequality uses the minimality of $t$. Let $I$ be the connected component of $[0,1]\setminus\Fix(f)$ containing $f^{t-1}(x)$ and let $\tau=t-1$. Since $f^{t-1}(x)$ moves to the right, every point of $I$ must move to the right, so in particular the right endpoint of $I$ must be fixed. Then $I$ must be an essential up interval whose left endpoint (the source) satisfies $c\leq \source(I) < f^{t-1}(x) = f^\tau(x) < c+\epsilon$.
\end{proof}

The following proposition completes the proof of Theorem~\ref{T:type1}.
\begin{prop}\label{P:ell-chain}
Let $f$ be an interval map of Sharkovskii type 1 and let $\ell$ be a positive integer. If $\hpol(f)>\ell-1$, then $f$ has an $\ell$-chain of essential intervals.
\end{prop}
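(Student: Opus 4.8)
The plan is to combine a coding argument with a counting argument. First I would fix $\epsilon_0>0$ with $\hpol^{\epsilon_0}(f)>\ell-1$; this is possible because $\hpol(f)=\sup_{\epsilon>0}\hpol^{\epsilon}(f)>\ell-1$, and it means that $\sep(n,\epsilon_0,f)$ is not $\mathcal{O}(n^{\ell-1})$. Next I would fix a hierarchy of smaller scales by running Lemmas~\ref{lem:never-return}, \ref{lem:find-ess-int} and \ref{lem:covering} in a suitable nested order --- feeding the desired separation of sources into the covering lemma, then the resulting scale into the find-essential-interval lemma, and so on --- so that whenever an orbit is seen to travel from very close to a fixed point $c$ to a prescribed distance away from $c$, all three lemmas can be applied with $c$ and that escape as input. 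With these scales in hand, fix a finite alphabet $\mathcal{C}\subseteq[0,1]$ consisting of an $\epsilon$-net of $\Fix(f)$ together with enough extra points to $\epsilon$-net the (finitely many) large components of $[0,1]\setminus\Fix(f)$, where $\epsilon$ is small relative to all the auxiliary scales and $\epsilon\le\epsilon_0/4$.

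I would then assign to each $x$ a code $(\mathrm{code}_i(x))_{i\ge 0}$ over $\mathcal{C}$ that ``lazily'' tracks the orbit: $\mathrm{code}_i(x)$ stays equal to the previous symbol as long as $f^i(x)$ remains within a fixed tolerance of it, and otherwise jumps to a symbol within $\epsilon$ of $f^i(x)$. Call a maximal run on which the code is a constant fixed-point symbol (with the orbit genuinely near it) a \emph{block}, and a passage from one block to the next a \emph{block switch}. Three facts are needed. (i) By Lemma~\ref{lem:sharkovskii}(2) every orbit converges to a fixed point, and by Lemma~\ref{lem:never-return} it never returns near a fixed point it has left, so each code eventually enters an infinite block and has only finitely many block switches. (ii) Using the same-side rule together with compactness --- on the compact set where the orbit is bounded away from $\Fix(f)$ the quantity $|f(x)-x|$ is bounded below, and within a component of $[0,1]\setminus\Fix(f)$ the orbit is monotone, so by the same-side rule it cannot re-enter such a component once it has crossed it --- the ``transit'' segments between consecutive blocks have total length bounded by a constant $T=T(f,\epsilon)$. (iii) If $\mathrm{code}_i(x)=\mathrm{code}_i(y)$ for all $i<n$, then $d(f^i(x),f^i(y))\le 2\epsilon\le\epsilon_0$ for all $i<n$, so $\{x,y\}$ is not $(n,\epsilon_0)$-separated. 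By (iii), $\sep(n,\epsilon_0,f)\le\#\{\text{length-}n\text{ codes}\}$. A code with at most $\ell-1$ block switches is determined by the $\le\ell-1$ positions at which its transits begin ($\mathcal{O}(n^{\ell-1})$ choices), the contents of those transits (words over $\mathcal{C}$ of total length $\le T$, hence finitely many), and the fixed-point symbols of the $\le\ell$ blocks (finitely many); so there are only $\mathcal{O}(n^{\ell-1})$ of them. If every orbit had at most $\ell-1$ block switches this would give $\sep(n,\epsilon_0,f)=\mathcal{O}(n^{\ell-1})$, contradicting the choice of $\epsilon_0$. Hence some orbit, say of $x$, has a code with at least $\ell$ block switches.

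Finally I would convert these $\ell$ block switches into an $\ell$-chain. If the consecutive blocks sit near fixed points $c_0,c_1,\dots,c_\ell$, then at the $j$-th switch the orbit is close to $c_{j-1}$ and then escapes from it, so Lemma~\ref{lem:find-ess-int} yields an essential interval $I_j$ and a time $\tau_j$ with $f^{\tau_j}(x)\in I_j$, $|\source(I_j)-c_{j-1}|$ small and $|\source(I_j)-f^{\tau_j}(x)|$ small, and the $\tau_j$ are strictly increasing. Lemma~\ref{lem:never-return}, applied at each $c_{j-1}$, forces the $c_j$ --- hence the $\source(I_j)$, hence the $I_j$ --- to be pairwise well separated, so $I_1,\dots,I_\ell$ are distinct. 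For $i<j$ the point $f^{\tau_j}(x)\in\Orb(I_i)$ lies in $I_j$ close to $\source(I_j)$ while $\source(I_i)$ and $\source(I_j)$ are far apart, so Lemma~\ref{lem:covering} gives $\Orb(I_i)\supseteq I_j$; in particular $\Orb(I_j)\supseteq I_{j+1}$ for $j<\ell$, so $I_1,\dots,I_\ell$ is an $\ell$-chain of essential intervals. I expect the main obstacle to be the bookkeeping that makes everything fit: choosing the hierarchy of scales so that each of the three lemmas genuinely applies at a block switch, and --- the more delicate point --- establishing the uniform bound $T$ on transit length, which is precisely what keeps the count of low-complexity codes at polynomial degree $\ell-1$ rather than something larger.
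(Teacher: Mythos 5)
Your proposal follows the paper's proof almost exactly: the same hierarchy of scales fed through Lemmas~\ref{lem:never-return}, \ref{lem:covering} and \ref{lem:find-ess-int}, the same block/switch coding, the same $\mathcal{O}(n^{\ell-1})$ count of low-switch codes played against $\hpol^{\epsilon_0}(f)>\ell-1$, and the same conversion of $\ell$ switches into an $\ell$-chain (Lemma~\ref{lem:find-ess-int} for existence, Lemma~\ref{lem:never-return} for distinctness of sources, Lemma~\ref{lem:covering} for the covering property). The one place you diverge is the justification of your claim (ii), the uniform bound $T$ on total transit length, and there your argument has a genuine gap. The same-side rule does \emph{not} imply that an orbit cannot re-enter a component $J=(a,b)$ of $[0,1]\setminus\Fix(f)$ after leaving it: a map of Sharkovskii type 1 can have $x\in J$ with $f(x)>b$ and $f^2(x)\in(x,b)$ (the same-side rule only forces later visits to $J$ to lie to the right of earlier ones, and such a configuration need not create any period-$2$ point). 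Consequently two consecutive visits to the part of $J$ that is bounded away from $\Fix(f)$ need not be separated by your lower bound $\rho$ on $|f(y)-y|$, and the count of visits per component breaks down; for a single orbit the number of visits is still finite, but the counting argument needs a bound that is uniform in $x$. The paper obtains this uniform bound from Lemma~\ref{lem:sharkovskii}(3): every point of the compact transit region is wandering, so the region is covered by finitely many wandering balls, each of which any orbit visits at most once, and the total transit time is at most the number of balls. If you replace your justification of (ii) by this compactness argument (keeping your correct observation that the first transit starts at position $0$, which is what makes the count $n^{\ell-1}$ rather than $n^{\ell}$), the rest of your plan goes through as written.
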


\begin{proof}
Fix $\epsilon>0$ small enough so that $\hpol^{2\epsilon}(f)>\ell-1$. We will define an open cover $\U_\epsilon$ of $[0,1]$ by balls of radius $\epsilon$. We will code the trajectories of points using the elements of $\U_\epsilon$ in a way which allows us to detect an $\ell$-chain of essential intervals for $f$.

\smallskip

\emph{Step 1: Construction of $\U_\epsilon$}.\\
Using Lemma~\ref{lem:never-return} find a positive number $\delta<\epsilon$ such that for $x\in[0,1]$, $c\in\Fix(f)$, $0\leq s<t$,
\begin{equation}\label{1a}
\begin{gathered}
\text{If } |f^s(x)-c|<\delta \text{ and } |f^t(x)-c|\geq\epsilon,\\
\text{then } |f^m(x)-c|\geq\delta \text{ for all } m\geq t.
\end{gathered}
\end{equation}
Using Lemma~\ref{lem:covering} find a positive number $\gamma<\frac{\delta}{6}$ such that for $x\in[0,1]$, $I,I'\in\I$, $0\leq\tau<\tau'$,
\begin{equation}\label{1b}
\begin{gathered}
\text{If } f^\tau(x)\in I \text{ and } f^{\tau'}(x)\in I' \text{ and } |f^{\tau'}(x)-\source(I')|<\gamma,\\
\text{then } |\source(I)-\source(I')|<\frac{\delta}{2} \text{ or } \Orb(I)\supseteq I'.
\end{gathered}
\end{equation}
Using Lemma~\ref{lem:find-ess-int} find a positive number $\beta<\gamma$ such that for $x\in[0,1]$, $c\in\Fix(f)$, $0\leq s<t$,
\begin{equation}\label{1c}
\begin{gathered}
\text{If } |f^s(x)-c|<\beta \text{ and } |f^t(x)-c|\geq\gamma,\\
\text{then there exist } \tau\in\Nzero, I\in\I \text{ such that } s\leq\tau<t \text{ and } f^\tau(x)\in I\\
\text{and } |\source(I)-c|<\gamma \text{ and } |\source(I)-f^\tau(x)|<\gamma.
\end{gathered}
\end{equation}
Now choose a finite subset $C\subseteq\Fix(f)$ such that $\bigcup_{c\in C} B(c,\beta)$ covers the compact set $\Fix(f)$. By Lemma~\ref{lem:sharkovskii}, all non-wandering points of $f$ are fixed. Thus the compact set $[0,1]\setminus\bigcup_{c\in C} B(c,\beta)$ contains only wandering points of $f$. Choose a finite set $W$ of these wandering points with corresponding radii $r_w<\epsilon$, $w\in W$, such that each ball $B(w,r_w)$ is wandering in the sense that $B(w,r_w) \cap f^i(B(w,r_w)) = \emptyset$ for all $i\geq 1$ and such that the union of all the balls $B(w,r_w)$, $w\in W$, together with all the balls $B(c,\beta)$, $c\in C$, covers the whole interval $[0,1]$. Let $A=W\cup C$ and put $\U_\epsilon = \left\{ B(a,\epsilon) ~;~ a \in A\right\}$. Then $\U_\epsilon$ is an open cover of $[0,1]$ by balls of uniform radius $\epsilon$ which we will use to estimate the $2\epsilon$-polynomial entropy.

\smallskip

\emph{Step 2: Coding trajectories with respect to $\U_\epsilon$}.\\
Since $\U_\epsilon$ is an open cover of $[0,1]$ every point $x\in[0,1]$ has at least one itinerary with respect to $\U_\epsilon$, i.e.\ a sequence of balls $B(a_n,\epsilon)$, $n=0,1,\ldots$, such that $a_n\in A$ and $f^n(x)\in B(a_n,\epsilon)$ for all $n$. The sequence of centers $a_0 a_1 a_2 \cdots$ of these balls will be called a \emph{code} of $x$. Since $\U_\epsilon$ is an open cover rather than a partition, a point in general has many codes. For each $x\in[0,1]$ we will construct one code which has a block structure of the form
\begin{equation}\label{blocks}
\star\ \framebox{$c_1\ldots c_1$\strut}\ \star\ \framebox{$c_2\ldots c_2$\strut}\ \star\ \cdots\ \star\  \framebox{$c_{\lambda} \ldots c_{\lambda}$\strut}\ \star\ \framebox{$c_{\lambda+1} c_{\lambda+1} \ldots$\strut},
\end{equation}
where each $\star$ represents a finite (possibly empty) block of symbols from $W$, each symbol from $W$ is used at most once in the whole code, $\lambda=\lambda(x)\geq 0$, the constant blocks of the form $c_i \ldots c_i$ are nonempty, the last constant block $c_{\lambda+1} c_{\lambda+1} \ldots$ has infinite length, each symbol $c_i$ is an element of $C$, and the points $c_1, c_2, \ldots, c_{\lambda+1}$ are pairwise distinct. Note that $\lambda(x)$ counts the number of \emph{switches} from one constant block to another in the code of $x$.

Now we describe a procedure to choose a code of this form for any point $x\in[0,1]$.
\begin{itemize}
\item Since by Lemma~\ref{lem:sharkovskii} the trajectory of $x$ converges to a fixed point, there is a smallest $s_1\geq 0$ such that $f^{s_1}(x) \in \bigcup_{c\in C} B(c,\beta)$. Choose $c_1 \in C$ such that $f^{s_1}(x) \in B(c_1,\beta)$.
\item If $f^{s_1}(x)$ never leaves the larger ball $B(c_1,\epsilon) \in \U_\epsilon$, then put $\lambda=0$ (there will be no switches between constant blocks) and write the symbol $c_1$ in the code of $x$ in positions $s_1, s_1+1,\ldots$.
\item Otherwise, there is a smallest $t_1> s_1$ such that $f^{t_1}(x)\not\in B(c_1,\epsilon)$. Write the symbol $c_1$ in the code of $x$ in positions $s_1, s_1+1, \ldots, t_1-1$.
\item Now continue inductively. If $c_{i-1},s_{i-1},t_{i-1}$ have already been defined, then since the trajectory of $x$ converges to a fixed point, there is a smallest $s_i\geq t_{i-1}$ such that $f^{s_i} \in \bigcup_{c\in C} B(c,\beta)$. Choose $c_i \in C$ such that $f^{s_i}(x) \in B(c_i,\beta)$.
\item If $f^{s_i}(x)$ never leaves the larger ball $B(c_i,\epsilon)$, then put $\lambda=i-1$ and write the symbol $c_i$ in the code of $x$ in positions $s_i, s_i+1, \ldots$.
\item Otherwise, there is a smallest $t_i>s_i$ such that $f^{t_i}(x)\not\in B(c_i,\epsilon)$. Write the symbol $c_i$ in the code of $x$ in positions $s_i, s_i+1, \ldots, t_i-1$, and repeat the induction step again.
\end{itemize}
Now we show that any two symbols $c_i, c_j$, $i<j$, chosen in this coding procedure are distinct. We have $|f^{s_i}(x)-c_i|<\beta<\delta$ and $|f^{t_i}(x)-c_i|\geq\epsilon$, so by~\eqref{1a} the trajectory never returns to $B(c_i,\delta)$ after time $t_i$. But $s_j\geq t_i$ so in particular $f^{s_j}(x)\not\in B(c_i,\delta)$. On the other hand $f^{s_j}(x)\in B(c_j,\beta)$, so by the triangle inequality $\delta \leq |f^{s_j}(x) - c_i| \leq |f^{s_j}(x) - c_j| + |c_j-c_i| < \beta + |c_j-c_i|$, whence
\begin{equation}\label{ci-cj}
|c_i-c_j| > \delta - \beta > 0.
\end{equation}
Now that we know that the symbols chosen from $C$ are distinct, it follows that $\lambda$ is finite because $C$ is a finite set. This shows that the code of $x$ has the form~\eqref{blocks}, where we still need to fill in the $\star$-blocks. Continue the coding procedure as follows:
\begin{itemize}
\item If the positions occupied by the symbols $c_1, \ldots, c_{\lambda+1}$ cover the whole set $\Nzero$, then we are done.
\item Otherwise, if the $m$th position in the code of $x$ has not yet been filled, then by our coding procedure $f^m(x)\not\in \bigcup_{c\in C} B(c,\beta)$. Therefore $f^m(x) \in \bigcup_{w\in W} B(w,r_w)$. So choose $w\in W$ such that $f^m(x) \in B(w,r_w)$ and write the symbol $w$ in position $m$.
\end{itemize}
The fact that the sets $B(w,r_w)$ are wandering guarantees that we do not use any symbol $w\in W$ more than once. This concludes the construction of a code for $x$ of the form~\eqref{blocks}.

\smallskip

\indent\emph{Step 3: Finding a point $x$ with at least $\ell$ switches in its code}.\\
Suppose that $\lambda(x)\leq\ell-1$ for all $x\in[0,1]$. We will use this assumption to show that $\hpol^{2\epsilon}(f)\leq\ell-1$, contradicting the choice of $\epsilon$ in Step 1. Recall that $\hpol^{2\epsilon}(f)$ is calculated using the separation numbers $\sep(n,2\epsilon,f)$. Observe that if $x,y$ are $(n,2\epsilon,f)$-separated, then their codes in step 2 differ in at least one of the first $n$ positions. This shows that $\sep(n,2\epsilon,f)$ is bounded above by the number of distinct length-$n$ initial segments from the set of all codes chosen in step 2. Each of those initial segments is a word $\a\in A^n$ with the following two properties:
\begin{enumerate}
\item If some symbol from $A$ is repeated more than once in $\a$, then all these repetitions occur together in a single block (call it a constant block), and
\item there are at most $\ell$ constant blocks in $\a$, i.e.\ at most $\ell-1$ switches between constant blocks (since we assumed $\lambda(x)\leq\ell-1$ for each $x$).
\end{enumerate}
Any word with those two properties will be called \emph{allowable}, and we wish to count the number $\omega(n)$ of allowable length-$n$ words as it gives an upper bound on $\sep(n,2\epsilon,f)$ (the upper bound is quite rough, but it is sufficient for our purposes). Much like with the coded trajectories, we picture an allowable word $\a$ as an alternating concatenation of $\star$-blocks and constant blocks
\begin{equation*}
\a = \underbrace{\star \ \framebox{$a_1 \ldots a_1$\strut} \ \star \ \framebox{$a_2 \ldots a_2$\strut} \ \star \ \cdots \ \star \ \framebox{$a_{m} \ldots a_{m}$\strut} \ \star}_{\text{length }n}
\end{equation*}
where the $\star$-blocks (they can be empty) consist of the symbols that are used only once in $\a$ and the number $m$ of constant blocks is at most $\ell$. We claim that for $n>\# A$,
\begin{equation}\label{words}
\omega(n) \leq
\underbracket{\sum\limits_{i=1}^{\#A}\vphantom{\sum\limits_{l=0}^{\min(i,S+1)}}}_{(1)}
\underbracket{\binom{\#A}{i}\vphantom{\sum\limits_{l=0}^{\min(i,S+1)}}}_{(2)}
\underbracket{\left(i!\right)\vphantom{\sum\limits_{l=0}^{\min(i,S+1)}}}_{(3)}
\underbracket{\sum\limits_{m=1}^{\min(i,\ell)}}_{(4)}
\underbracket{\binom{i}{m}\vphantom{\sum\limits_{m=1}^{\min(i,L+1)}}}_{(5)}
\underbracket{p_2(m,n-(i-m))\vphantom{\sum\limits_{m=1}^{\min(i,L+1)}}}_{(6)},
\end{equation}
where $p_2(m,n-(i-m))$ is the number of ordered $m$-tuples of natural numbers greater than or equal to 2 whose sum is $n-(i-m)$. The justification for~\eqref{words} is that each allowable word $\a$ is uniquely determined by the following choices.
\begin{enumerate}
\item Choose the number $i$ of distinct letters used in $\a$.
\item Choose which $i$ letters from the alphabet $A$ to use.
\item Choose the order that those letters appear.
\item Choose the number $m$ of constant blocks. (Since $n>\# A$, we have $m\geq 1$. On then other hand, there are at most $\ell$ constant blocks in $\a$, and $m\leq i$ is obvious.)
\item Choose which $m$ of the $i$ letters to use in those constant blocks.
\item Choose the lengths of those blocks to add up to $n-(i-m)$. Since $i-m$ letters are used in the $\star$-blocks, this gives the word $\a$ the desired length $n$.
\end{enumerate}
Notice especially the constraint that the lengths of the constant blocks must add up to $n-(i-m)$, where $i,m$, are bounded by constants that do not depend on $n$. If we let $p_0(m,n)$ denote the number of ordered $m$-tuples of nonnegative integers whose sum is $n$, then clearly
\begin{equation*}
p_2(m,n-(i-m)) \leq p_0(m,n-(i-m)) \leq p_0(m,n) \leq p_0(\ell,n)
\end{equation*}
whenever $m \leq \ell$. By the stars and bars theorem from combinatorics, $p_0(\ell,n)=\binom{n+\ell-1}{\ell-1}$. So for some constant $K$ which depends on $\ell$ and $\# A$ but not $n$, we have\footnote{The upper estimate $\omega(n)\leq \textrm{Constant}\cdot n^{\ell-1}$ could seem at first counterintuitive based on the following reasoning: some of the words are allowed to have $\ell$ constant blocks, and it seems that there are at least $\binom{n}{\ell}$ such words because this is the number of choices of the starting positions of $\ell$ blocks, and $\binom{n}{\ell} \sim n^{\ell}$ (even if we take into account that those blocks have lengths at least $2$, that order is still $n^{\ell}$). But this argument is wrong because we cannot choose the starting positions of the constant blocks arbitrarily. Indeed, in each allowable word the $\star$-blocks must be short when $n \gg \#A$, because each symbol in a $\star$-block can be used only once. This reflects the fact that outside $\bigcup_{c\in C} B(c,\beta)$ the graph of $f$ is far away from the diagonal and so the trajectory moves quickly from one constant block to the next one. In particular, the beginning position of the first constant block is rather close to zero. Even if the beginnings of the other $\ell-1$ constant blocks can occur in more or less arbitrary positions, this gives only $\binom{n}{\ell-1}$ choices. Therefore it is not surprising that $\omega(n)$ is of the order $n^{\ell-1}$ rather than $n^{\ell}$.}
\begin{equation*}
\omega(n) \leq K \binom{n+\ell-1}{\ell-1}.
\end{equation*}
This is a polynomial in $n$ of degree $\ell-1$ and therefore
\begin{equation*}
\hpol^{2\epsilon}(f) = \limsup_{n\to\infty} \frac{\log \sep(n,2\epsilon,f)}{\log n} \leq \limsup_{n\to\infty} \frac{\log\omega(n)}{\log n} \leq \ell-1.
\end{equation*}
This contradiction shows that there must be some point $x$ with $\lambda(x)\geq\ell$.

\smallskip

\emph{Step 4: Construction of a chain of essential intervals of length $\ell$.}\\
Fix $x\in[0,1]$ with $\lambda(x)\geq\ell$ and consider its code of the form~\eqref{blocks} constructed in Step 2. We will use the orbit of $x$ to find distinct essential intervals $I_1, \ldots, I_{\ell}$ of $f$ with sources close to the points $c_1, \ldots, c_{\ell}$, respectively, and forming a chain of essential intervals. It is critical that the code makes at least $\ell$ switches between constant blocks, and in particular the block $c_{\ell}\cdots c_{\ell}$ eventually terminates (at time $t_\ell$, when the orbit moves far enough away from $c_\ell$).

For each $i=1,\ldots,\ell$ consider the behavior of the trajectory of $x$ from time $s_i$ to $t_i$. By the coding procedure we have $|f^{s_i}(x)-c_i|<\beta$ and $|f^{t_i}(x)-c_i|\geq\epsilon>\gamma$. By~\eqref{1c} there is a time $\tau_i$ and an essential interval $I_i$ such that $s_i\leq \tau_i < t_i$ and
\begin{equation}\label{Ii}
f^{\tau_i}(x)\in I_i \text{ and } |\source(I_i)-c_i|<\gamma \text{ and } |\source(I_i)-f^{\tau_i}(x)|<\gamma,
\end{equation}
see Figure~\ref{fig:codes}.

\begin{figure}[htb!!]
	\begin{tikzpicture}
		\draw[thick] (-4,0) -- (7,0);
		\draw[thick] (0,.2) -- (0,-.4) node[below] {$c_i$};
		\draw (-0.25,.45) -- (0.25,.45) node[right] (beta) {\scriptsize $B(c_i,\beta)$};
		\node (betatext) at (8,.45) {\scriptsize visited at the start of the $i$th constant block};
		\draw[dotted] (beta) -- (betatext);
		\draw (-0.5,.8) -- (0.5,.8) node[right] (gamma) {\scriptsize $B(c_i,\gamma)$};
		\node (gammatext) at (8,.8) {\scriptsize contains $f^{\tau_i}(x)$ and $\source(I_i)$};
		\draw[dotted] (gamma) -- (gammatext);
		\draw (-1.5,1.15) -- (1.5,1.15) node[right] (delta) {\scriptsize $B(c_i,\delta)$};
		\node (deltatext) at (8,1.15) {\scriptsize never visited again after the block ends};
		\draw[dotted] (delta) -- (deltatext);
		\draw (-2.5,1.5) -- (2.5,1.5) node[right] (epsilon) {\scriptsize $B(c_i,\epsilon)$};
		\node (epsilontext) at (8,1.5) {\scriptsize exited at the end of the block};
		\draw[dotted] (epsilon) -- (epsilontext);
		\draw[fill] (-.15,0) circle [radius=0.05] node[below] {\scriptsize $s_i$};
		\draw[fill] (0.4,0) circle [radius=0.05] node[below] {\scriptsize $\tau_i$};
		\draw[fill] (4.5,0) circle[radius=0.05] node[below] {\scriptsize $t_i$};
		\draw (0.2,-1) -- node[below] {\scriptsize $I_i$} (0.8,-1);
		\draw (0.4,-1) + (135:0.2) -- +(180:0.2) -- + (215:0.2);
	\end{tikzpicture}
	\caption{Balls around fixed points used in the coding procedure.}\label{fig:codes}
\end{figure}
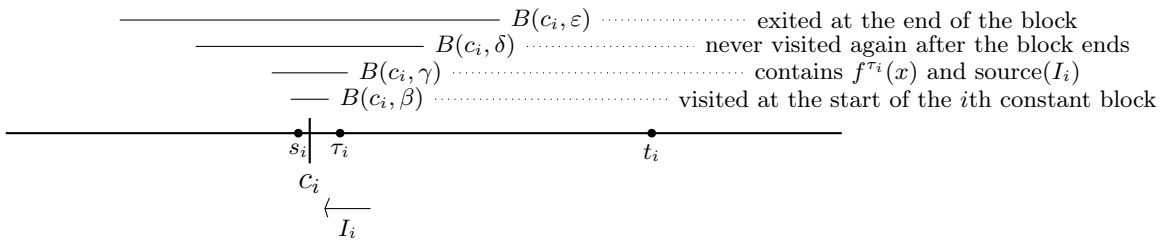

Now we show that the essential intervals $I_1, I_2, \ldots, I_{\ell}$ are pairwise distinct. Let $1\leq i<j\leq \ell$ and consider the two times $\tau_i, \tau_j$. Clearly $\tau_i<t_i\leq s_j<\tau_j$. Using the triangle inequality with~\eqref{ci-cj} and~\eqref{Ii} we get
\begin{equation}\label{Ii-Ij}
\begin{aligned}
|\source(I_i)-\source(I_j)| &\geq |c_i-c_j| - |\source(I_i)-c_i| - |\source(I_j)-c_j|\\
&\geq\delta-\beta-2\gamma > \delta -3\gamma > \frac{\delta}{2}, 
\end{aligned}
\end{equation}
since we chose $\beta<\gamma<\frac{\delta}{6}$. This shows that $I_i, I_j$ have distinct sources, hence they are distinct essential intervals.

Finally whenever $1\leq i < j \leq \ell$ we have $\tau_i<\tau_j$ and it follows from~\eqref{1b}, \eqref{Ii}, and \eqref{Ii-Ij} that $\Orb(I_i) \supseteq I_{j}$. This completes the proof that $I_1, I_2, \ldots, I_{\ell}$ is a chain of essential intervals for $f$.
\end{proof}


\subsection{Simple $2^n$-cycles}\label{SS:simple}


Our next goal is to extend Theorem~\ref{T:type1} to interval maps of arbitrary Sharkovskii type. Recall the well-known Sharkovskii order $\prec$ on the set $\mathbb{N}\cup\{2^\infty\}$ of natural numbers (positive integers) with an added symbol $2^\infty$, given as follows:
\begin{multline*}
1 \prec 2 \prec 4 \prec 8 \prec \cdots \prec 2^\infty \prec \\
\cdots \prec
\cdots \prec 2^2\cdot7 \prec 2^2\cdot5 \prec 2^2\cdot3
\prec
\cdots \prec 2\cdot7 \prec 2\cdot5 \prec 2\cdot3
\prec
\cdots \prec 7 \prec 5 \prec 3
\end{multline*}
Sharkovskii proved that if an interval map $f$ has a cycle of period $n\in\mathbb{N}$, then it also has cycles of all periods $m\in\mathbb{N}$ which precede $n$ in the Sharkovskii order~\cite{Sh64}.
The \emph{Sharkovskii type} of an interval map $f$ is the $\prec$-maximum of the periods of its cycles, or $2^\infty$ if the set of periods is the set of powers of $2$. We will write $\Type(f)\in\N\cup\{2^\infty\}$ for the Sharkovskii type. We will also use the Sharkovskii order to compare types, for example, to speak of maps of type greater than or equal to $2^\infty$.

We are especially interested in maps of Sharkovskii type $2^n$ or $2^\infty$, since in any other case $f$ has positive topological entropy and therefore infinite polynomial entropy. To simplify notation, if $P=\{x_1,\cdots,x_{2^n}\}$ is a $2^n$-cycle of $f$ with the spatial ordering $x_1 < \cdots < x_{2^n}$, we will express this compactly by writing $P=\{x_1<\cdots<x_{2^n}\}$.

Recall that a cycle $P$ of period $2^k$, $k\geq 0$, is called \emph{simple} if it can be obtained from a cycle of period $1$ by making $k$-times $2$-extensions (in the terminology of \cite{ALM00}). An equivalent definition is by induction \cite{Bl79}. All cycles of periods $1$ and $2$ are simple. If $n\geq 1$, a $2^{n+1}$-cycle $P=\{x_1 < x_2 < \cdots < x_{2^{n+1}}\}$ of $f$ is simple if for $P_{L}=\{x_1, \dots, x_{2^n}\}$ and $P_{R}=\{x_{2^n+1}, \dots, x_{2^{n+1}}\}$ we have $f(P_L)=P_R$, $f(P_R)=P_L$ and $P_L$, $P_R$ are simple $2^n$-cycles of $f^2$. If $n\geq 1$ and $P$ is a simple $2^{n+1}$-cycle of $f$ then, in the notation used above, the interval $M=[\max P_L, \min P_R]$ will be called the \emph{middle interval of $P$}. Note that, since $n\geq 1$, the middle interval $M$ of $P$ lies ``strictly" inside the convex hull $\conv (P)$ in the sense that $\min P < \min M < \max M < \max P$.

For our purposes it is important that if $f$ has a $2^n$-cycle then it also has a simple $2^n$-cycle, see~\cite{BH83} or \cite[Corollary 2.11.2]{ALM00}. 

Recall also the general fact that for a map $f$ on a compact metric space, each iterate $f^n$ has the same set of recurrent points as $f$. In particular, if we speak about a non-recurrent point, it is not a problem if we forget to specify whether it is non-recurrent with respect to $f$ or some iterate.

\begin{lem}\label{L:middle}
Let $\varphi\colon [0,1]\to [0,1]$ have a simple $2^n$-cycle $P$ with $n\geq 2$ and middle interval~$M$. Then $\varphi^{2^n}(M) \supseteq \conv (P)$ and $M$ contains a non-recurrent point.	
\end{lem}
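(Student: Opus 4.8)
The plan is to analyze the structure of a simple $2^n$-cycle recursively, peeling off one level of $2$-extension at a time. Write $P = P_L \cup P_R$ as in the definition of simplicity, so that $\varphi$ swaps $P_L$ and $P_R$, and $P_L, P_R$ are simple $2^{n-1}$-cycles of $g := \varphi^2$. The middle interval $M = [\max P_L, \min P_R]$ has $\varphi$-image $\varphi(M)$ which, by connectedness, must span from a point of $\varphi(P_L) = P_R$ to a point of $\varphi(P_R) = P_L$; more carefully, $\varphi(\max P_L) \in P_R$ and $\varphi(\min P_R) \in P_L$, so $\varphi(M) \supseteq [\text{some point of } P_L, \text{ some point of } P_R] \supseteq M$. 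A first easy observation is therefore that $\varphi(M) \supseteq M$, hence $\varphi^{2^n}(M) \supseteq M$ as well; the content of the lemma is the stronger claim that $\varphi^{2^n}(M)$ swells all the way out to $\conv(P)$.

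The key step is a covering argument. I would first show that $\varphi^2(M)$ contains the middle interval of $P_L$ (viewed as a simple $2^{n-1}$-cycle of $g$) and likewise relates to $P_R$; more precisely, since $\varphi(M) \supseteq M$ and $\varphi(M)$ reaches into both $P_L$ and $P_R$, applying $\varphi$ again we get $\varphi^2(M) \supseteq \varphi(P_L \text{-side of } \varphi(M)) \cup \varphi(P_R\text{-side})$, which straddles the gap between $P_L$ and $P_R$ and moreover pokes into the convex hulls $\conv(P_L)$ and $\conv(P_R)$. The cleanest route is an induction on $n$: for the inductive hypothesis applied to $g = \varphi^2$ and the simple $2^{n-1}$-cycle $P_L$ with its middle interval $M_L$, we get $g^{2^{n-1}}(M_L) \supseteq \conv(P_L)$, i.e. $\varphi^{2^n}(M_L) \supseteq \conv(P_L)$, and similarly $\varphi^{2^n}(M_R) \supseteq \conv(P_R)$. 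So it suffices to check that $\varphi^{k}(M)$, for suitable $k$, covers both $M_L$ and $M_R$; combined with $\varphi$ mapping $\conv(P_L)$ onto a set covering $\conv(P_R)$ (this itself needs the simple-cycle combinatorics, or can be extracted from \cite{ALM00}), iterating $2^n$ times yields $\varphi^{2^n}(M) \supseteq \conv(P_L) \cup \conv(P_R) \supseteq \conv(P)$, using that $\varphi^{2^n}$ fixes $\conv(P_L)$ and $\conv(P_R)$ setwise up to the covering relation. The base case is $n = 2$, where $P$ has four points and one checks the covering relations by hand from the known Štefan/simple-cycle arrow diagram; since $n\ge 2$ is assumed, the middle interval genuinely lies strictly inside $\conv(P)$, which is what makes the statement nontrivial.

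For the second assertion, that $M$ contains a non-recurrent point: since $\varphi^{2^n}(M) \supseteq \conv(P) \supsetneq M$ (strictly, because $\min P < \min M$ and $\max M < \max P$ by the $n \geq 2$ hypothesis), the set $A := M$ satisfies $\varphi^{2^n}(A) \supseteq A$ and moreover $\varphi^{2^n}(A)$ contains points outside $A$ — in particular $\conv(P) \setminus M$ is nonempty. One then argues that $A = M$ cannot consist entirely of recurrent points: if every point of $M$ were recurrent (equivalently, recurrent for $\varphi^{2^n}$, by the iterate-invariance of the recurrent set recalled before the lemma), a short argument using the same-side phenomenon for $\varphi^{2^n}$ restricted near the endpoints of $M$ — the endpoints $\max P_L, \min P_R$ lie on a cycle, and a point just inside $M$ near one endpoint must travel monotonically away — produces a contradiction; alternatively, and more robustly, one invokes that any $\varphi^{2^n}$-invariant interval strictly contained in its own image contains a wandering (hence non-recurrent) point, since a full recurrent interval would force the image to equal the interval. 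I expect the main obstacle to be the bookkeeping in the covering step — tracking exactly which subintervals of $\conv(P)$ are covered after each application of $\varphi$ and verifying the base case $n=2$ from the simple-cycle arrow diagram — rather than any conceptual difficulty; the non-recurrence claim is comparatively soft once the covering $\varphi^{2^n}(M) \supseteq \conv(P)$ with strict inclusion $M \subsetneq \conv(P)$ is in hand.
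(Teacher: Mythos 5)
Your opening observation that $\varphi(M)\supseteq M$ (hence $M\subseteq\varphi(M)\subseteq\varphi^{2}(M)\subseteq\cdots$) is exactly the right starting point, but the inductive covering machinery you build on top of it is unnecessary and, as sketched, does not close. The elementary observation you are missing is that $M$ already contains a point of the cycle $P$ itself, namely its endpoint $\max P_L$, whose $\varphi$-orbit is all of $P$. Since the images $\varphi^{k}(M)$ are nested increasing, the single set $\varphi^{2^n}(M)$ contains $\varphi^{k}(\max P_L)$ for every $k\leq 2^n$, i.e.\ all of $P$, and being connected it contains $\conv(P)$. Your inductive route has a concrete exponent problem: from $\varphi^{j}(M)\supseteq M_L$ for some $j\geq 1$ together with the inductive hypothesis $\varphi^{2^n}(M_L)\supseteq\conv(P_L)$ you only obtain $\varphi^{j+2^n}(M)\supseteq\conv(P_L)$, and since the images only grow with the iterate this cannot be converted into the stated conclusion for the iterate $2^n$ itself; the remark that ``$\varphi^{2^n}$ fixes $\conv(P_L)$ setwise up to the covering relation'' does not repair this. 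The exact iterate does matter, since Proposition~\ref{P:simple-horse} feeds the lemma into an induction on $g=f^2$.

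The non-recurrence step is where the more serious gap lies. Your first argument invokes a same-side phenomenon near the endpoints of $M$, but Lemma~\ref{lem:sharkovskii} is available only for maps of Sharkovskii type 1, whereas this lemma is applied (through Proposition~\ref{P:simple-horse}) to maps of arbitrary Sharkovskii type, including $2^\infty$ and beyond, so no same-side rule holds. Your ``more robust'' alternative rests on the assertion that an interval consisting entirely of recurrent points must equal its image; this is not proved and is not obviously true, since recurrent points are free to leave $M$ and return, so nothing forces $\varphi^{2^n}(M)=M$. The direct argument is short: by the covering there is $x\in M$ with $\varphi^{2^n}(x)=x_1=\min P$; from time $2^n$ onward the orbit of $x$ lies in the finite set $P$, so $x$ could be recurrent only if $x$ itself belonged to $P$ with $\varphi^{2^n}(x)=x$, forcing $x=x_1$, which is impossible because $x_1<x_{2^{n-1}}=\min M$ (this is precisely where $n\geq 2$ is used).
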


\begin{proof}
We have $P=\{x_1 < \cdots < x_{2^{n}}\}$, $P_{L}=\{x_1, \dots, x_{2^{n-1}}\}$ and $P_{R}=\{x_{2^{n-1} +1}, \dots, x_{2^{n}}\}$, $M=[x_{2^{n-1}}, x_{2^{n-1}+1}]$. Since the left endpoint of $M$ is mapped to $P_R$ and the right endpoint of $M$ is mapped to $P_L$, we have $\varphi (M) \supseteq M$. hence
\[
M \subseteq \varphi (M)	\subseteq \varphi^2 (M)	 \subseteq \cdots \subseteq \varphi^{2^n} (M).	
\]
Since $M$ contains a point from the $2^n$-cycle $P$, the union of these sets contains the whole set~$P$. It follows that $\varphi^{2^n} (M) \supseteq P$ and, by intermediate value theorem, 
$\varphi^{2^n} (M) \supseteq \conv (P)$. Any point $x\in M$ with $\varphi^{2^n}(x)=x_1$ is a non-recurrent point of $\varphi$.
\end{proof}

Let us agree to say that a one-way horseshoe $A_1, \dots, A_{\ell}$ lies strictly in the closed interval $[a,b]$ if $A_1 \cup \dots \cup A_{\ell} \subseteq (a,b)$.

\begin{prop}\label{P:simple-horse}
let $f\colon [0,1] \to [0,1]$ have a simple $2^n$-cycle $P$, $n\geq 2$. Then $f^{2^n}$ has a one-way $(n-1)$-horseshoe composed of pairwise disjoint closed intervals $A_1, \cdots, A_{n-1}$ strictly contained in $\conv (P)$.
\end{prop}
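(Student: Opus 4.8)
The plan is to realise the horseshoe as a nested family of middle intervals, feeding the cycle repeatedly into Lemma~\ref{L:middle}. Set $Q_1 := P$, a simple $2^n$-cycle of $f=g_1$. Inductively, if $Q_k$ is a simple $2^{n-k+1}$-cycle of the iterate $g_k := f^{2^{k-1}}$ (note that for $1 \le k \le n-1$ this period is at least $2^2$), write $Q_k = (Q_k)_L \cup (Q_k)_R$ and set $Q_{k+1} := (Q_k)_L$; by the inductive characterisation of simple cycles, $(Q_k)_L$ is a simple $2^{n-k}$-cycle of $g_k^2 = g_{k+1} = f^{2^k}$. For $k = 1, \dots, n-1$ let $A_k := M_k$ be the middle interval of $Q_k$ with respect to $g_k$, which is well defined exactly because the period $2^{n-k+1}$ is at least $4$ throughout that range. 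Each $A_k$ is a closed interval.

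For the covering relations I would apply Lemma~\ref{L:middle} with $\varphi = g_k$ to the simple cycle $Q_k$ of period $2^{n-k+1}\ge 4$, obtaining $g_k^{\,2^{n-k+1}}(M_k) \supseteq \conv(Q_k)$. Since $2^{k-1}\cdot 2^{n-k+1} = 2^n$, the left-hand side is exactly $f^{2^n}(A_k)$, so $f^{2^n}(A_k) \supseteq \conv(Q_k)$ for every $k$. The convex hulls are nested, $\conv(Q_1) \supseteq \conv(Q_2) \supseteq \dots \supseteq \conv(Q_{n-1})$, because each $Q_{k+1}$ is a subset of $Q_k$; hence for $1 \le i \le j \le n-1$ we get $f^{2^n}(A_i) \supseteq \conv(Q_i) \supseteq \conv(Q_j) \supseteq M_j = A_j$. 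Lemma~\ref{L:middle} also supplies a point of $M_k$ that is non-recurrent for $g_k$, and since the set of recurrent points does not depend on which iterate of $f$ one takes, every $A_k$ --- in particular $A_{n-1}$ --- contains a point that is non-recurrent for $f$ (hence also for $f^{2^n}$).

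It remains to verify that $A_1, \dots, A_{n-1}$ are pairwise disjoint and lie strictly inside $\conv(P)$. The two ingredients are: (i) $\conv(Q_{k+1}) = \conv((Q_k)_L)$ meets $M_k = [\max(Q_k)_L, \min(Q_k)_R]$ only in the single point $\max(Q_k)_L$, which is the right endpoint of $\conv(Q_{k+1})$; and (ii) by the observation recorded just after the definition of the middle interval, each $M_k$ with $2 \le k \le n-1$ lies strictly inside $\conv(Q_k)$, and the same observation applied to $P$ itself (a simple $2^n$-cycle with $n\ge2$) puts $M_1$ strictly inside $\conv(P)$. For $i < j$ one has $A_j = M_j \subseteq \conv(Q_j) \subseteq \conv(Q_{i+1})$, so $A_i\cap A_j \subseteq M_i\cap\conv(Q_{i+1})=\{\max(Q_i)_L\}$ by (i); a short computation --- treating $j = i+1$ by (ii), and $j > i+1$ via $M_j\subseteq\conv(Q_{i+2})$ whose right endpoint $\max(Q_{i+1})_L$ is strictly below $\max(Q_i)_L$ --- shows $\max(Q_i)_L \notin M_j$, whence $A_i \cap A_j = \emptyset$. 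Finally, since we always pass to left halves, $\min P$ is the left endpoint of every $\conv(Q_k)$ while $\max\conv(Q_k) \le \max P_L < \max P$ for $k \ge 2$; together with (ii) this forces $A_k \subseteq (\min P, \max P) = \Int\conv(P)$ for all $k$, i.e.\ the horseshoe lies strictly in $\conv(P)$.

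I do not expect a real obstacle here; the content is bookkeeping. The delicate points are that $Q_k$ is a simple cycle of the iterate $f^{2^{k-1}}$ rather than of $f$ itself, that its period halves at each step so that the middle interval is available for exactly the $n-1$ levels $k=1,\dots,n-1$, and that the identity $2^{k-1}\cdot 2^{n-k+1}=2^n$ is precisely what lets all the covering relations be recorded for the single map $f^{2^n}$. The disjointness and strict-containment claims need only mild care about the shared endpoints of the nested convex hulls.
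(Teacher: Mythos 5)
Your proof is correct and takes essentially the same approach as the paper: the paper argues by induction on $n$, at each step adjoining the middle interval of the current simple cycle and recursing into one half of the cycle, which is exactly your explicit tower of middle intervals $M_1,\dots,M_{n-1}$ (the paper descends through right halves rather than left halves, but that is immaterial). Both arguments rest on Lemma~\ref{L:middle} for the covering relations and on the strict containment of each middle interval in the convex hull of its cycle for disjointness.
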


\begin{proof}
By induction on $n$.

Let $n=2$.	Then $P=\{x_1 < x_2 < x_3 < x_4\}$, $M=[x_2, x_3]$ and, by Lemma~\ref{L:middle}, $f^4(M) \supseteq [x_1, x_4] \supseteq M$ and $M$ contains a non-recurrent point. Therefore $M$ is a one-way horseshoe of order $1$ for $f^4$.

Now assume that the claim from the proposition holds for some $n\geq 2$. We prove that it holds for $n+1$. Let $f$ have a simple $2^{n+1}$-cycle $P=\{x_1 < \cdots < x_{2^{n+1}}\}$, $P_{L}=\{x_1, \dots, x_{2^n}\}$, $P_{R}=\{x_{2^n+1}, \dots, x_{2^{n+1}}\}$, $M=[x_{2^n}, x_{2^n+1}]$. Put $g=f^2$. Then $P_R$ is a simple $2^n$-cycle of $g$ and so, by the induction hypothesis, $g^{2^n}$ has a one-way $(n-1)$-horseshoe composed of disjoint closed intervals $A_1, \dots, A_{n-1}$ lying strictly in $\conv (P_R)$. In particular, all the intervals in this horseshoe are disjoint from $M$, and each of them contains a non-recurrent point. However, $g^{2^n}=f^{2^{n+1}}$ and, by Lemma~\ref{L:middle}, $f^{2^{n+1}} (M)\supseteq \conv (P) \supseteq M$ and $M$ contains a non-recurrent point. Therefore $M, A_1, \dots, A_{n-1}$ is a one-way horseshoe for $f^{2^{n+1}}$ of order $n$, which lies strictly in $\conv (P)$.
\end{proof}

\subsection{Main rigidity theorem}\label{SS:rigid}

Here is the promised analogue of Misiurewicz's theorem, that polynomial entropy is given by one-way horseshoes. We give it the name ``rigidity theorem,'' because it also answers in the affirmative the main question we were interested in, namely whether the polynomial entropy on the interval takes only integer values (and the value $\infty$).

In view of condition (d) in Theorem~\ref{T:type1} it is not surprising that, similarly as in the case of topological entropy and horseshoes, also in the theory of polynomial entropy of interval maps of all Sharkovskii types it is sufficient to consider one-way horseshoes made of closed intervals. 

Recall that throughout the paper the supremum of the empty set is zero, so for example if $f$ and its iterates have no one-way horseshoes, then the quantities in part (b) and (c) of the next theorem are zero. 

\begin{thm}[Rigidity theorem]\label{T:rigid}
For an interval map $f$, the following three quantities are equal (and they belong to $\mathbb N_0 \cup \{\infty\}$).
\begin{enumerate}
	\item [(a)] The polynomial entropy $\hpol (f)$.
	\item [(b)] The supremum of the orders of one-way horseshoes of $f$ and its iterates.
	\item [(c)] The supremum of the orders of one-way horseshoes of $f$ and its iterates consisting of (closed) intervals. 
\end{enumerate}	
In particular, if the polynomial entropy is finite, then it is an integer.
\end{thm}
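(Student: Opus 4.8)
\emph{Proof proposal.} The plan is to deduce the theorem from the already‑established type~1 case (Theorem~\ref{T:type1}) via the power rule, using Proposition~\ref{P:simple-horse} only to dispose of the high‑type maps. First I would record the two inequalities valid for \emph{every} interval map: a one‑way $\ell$‑horseshoe made of closed intervals is in particular a one‑way $\ell$‑horseshoe, so the quantity in (c) is $\le$ the quantity in (b); and by Theorem~\ref{T:hor-ent} a one‑way $\ell$‑horseshoe of any iterate of $f$ forces $\hpol(f)\ge\ell$, so the quantity in (b) is $\le$ the quantity in (a). Hence the theorem reduces to the single inequality $(\mathrm a)\le(\mathrm c)$, together with the assertion that the common value lies in $\mathbb N_0\cup\{\infty\}$.

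I would then split on the Sharkovskii type of $f$. If $\Type(f)\succeq 2^\infty$, then $f$ has a cycle of period $2^n$ for every $n$ (all powers of $2$ precede $2^\infty$, which in turn precedes any larger type), hence a \emph{simple} $2^n$‑cycle for every $n\ge 2$ by~\cite{BH83}. Proposition~\ref{P:simple-horse} then gives, for each such $n$, a one‑way $(n-1)$‑horseshoe of closed intervals for $f^{2^n}$. Letting $n\to\infty$ yields that the quantity in (c) is $\infty$, while Theorem~\ref{T:hor-ent} gives $\hpol(f)\ge n-1$ for all $n$, so the quantity in (a) is $\infty$ too; thus all three quantities equal $\infty\in\mathbb N_0\cup\{\infty\}$ in this case.

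The heart of the argument is the remaining case $\Type(f)=2^n$ for some $n\in\mathbb N_0$. Here I would pass to $g=f^{2^n}$ and observe that $g$ has Sharkovskii type~$1$: if $x$ is $g$‑periodic, its $f$‑period $p$ satisfies $p\preceq 2^n$, so $p=2^s$ with $s\le n$, whence $f^{2^n}(x)=x$, i.e.\ $x$ is a fixed point of $g$. Theorem~\ref{T:type1} applied to $g$ then says that $\hpol(g)$ equals the supremum of the orders of one‑way horseshoes consisting of closed intervals of $g$ and its iterates, a value in $\mathbb N_0\cup\{\infty\}$. Every iterate of $g$ is an iterate of $f$, so that supremum is $\le$ the quantity in (c); combining this with the power rule $\hpol(f)=\hpol(g)$ and the inequalities of the first paragraph gives that the quantities in (a), (b), (c) all coincide with $\hpol(g)\in\mathbb N_0\cup\{\infty\}$. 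In particular, if $\hpol(f)$ is finite it is a nonnegative integer.

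I expect the only real content beyond bookkeeping to be the two type‑theoretic points: that $f^{2^n}$ has type~$1$ precisely when $\Type(f)=2^n$ (a short computation with periods under iteration, but the whole reduction hinges on it), and, in the complementary regime, that $\Type(f)\succeq 2^\infty$ genuinely supplies $2^n$‑cycles for all $n$ so that Proposition~\ref{P:simple-horse} can be fed. Everything else is an assembly of inequalities already proved.
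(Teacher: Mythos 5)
Your proposal is correct and follows essentially the same route as the paper: establish $(\mathrm c)\le(\mathrm b)\le(\mathrm a)$ from Theorem~\ref{T:hor-ent}, then close the loop by cases on the Sharkovskii type, using Proposition~\ref{P:simple-horse} for types $\succeq 2^\infty$ and reducing type $2^n$ to the type~1 case of Theorem~\ref{T:type1} via $g=f^{2^n}$ and the power rule. The only cosmetic difference is that you absorb type~1 into the $n=0$ instance of the general $2^n$ case, whereas the paper lists it separately.
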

\begin{proof}
Let $f\colon [0,1]\to [0,1]$. Let $S$ or $S_{\rm int}$ be the supremum of the numbers $\ell$ such that some iterate of $f$ has a one-way $\ell$-horseshoe formed of $\ell$ pairwise disjoint compact sets or pairwise disjoint compact intervals, respectively, and set $S=0$ or $S_{\rm int}=0$ if there are no such horseshoes. By Theorem~\ref{T:hor-ent} we have
\begin{equation}\label{Eq:geq S}
\hpol(f) \geq S \geq S_{\rm int}.
\end{equation}
It remains to prove that $\hpol(f) \leq S_{\rm int}$.

If $f$ has Sharkovskii type 1, then by Theorem~\ref{T:type1} we already have equality $\hpol(f)=S_{\rm int}$.

If $f$ has Sharkovskii type $2^{\infty}$ or higher, then it has cycles of period $2^n$ for all $n$. As already mentioned in Subsection~\ref{SS:simple},  it has also simple cycles of such periods. Thus by Proposition~\ref{P:simple-horse}, the iterates of $f$ have one-way horseshoes, composed of pairwise disjoint closed intervals, of arbitrarily large orders $\ell$. Therefore $S_{\rm int} =\infty$ and in view of~\eqref{Eq:geq S} we again have equality $\hpol(f)=S_{\rm int}$.

If $f$ has Sharkovskii type $2^n$ for some $n\geq 1$, then put $g=f^{2^n}$. Then every periodic point for $f$ is fixed for $g$ and since $f,g$ have the same sets of periodic points, the Sharkovskii type of $g$ is $1$. Since each iterate of $g$ is an iterate of $f$, the supremum $S'_{\rm int}$ of orders of one-way horseshoes, composed of pairwise disjoint closed intervals, for $g$ and its iterates is less than or equal to $S_{\rm int}$. Using Theorem~\ref{T:type1} and the power rule for polynomial entropy we have
\[
\hpol(f) =\hpol(f^{2^n}) = \hpol(g) = S'_{\rm int} \leq S_{\rm int}.
\]

We have thus shown that $\hpol(f)=S = S_{\rm int}$ for all interval maps and the proof is finished.
\end{proof}

\begin{rem}
Topological entropy of interval maps is lower semi-continuous with respect to uniform metric, see \cite{Mis79} or \cite[Theorem 4.5.2]{ALM00} and this fact is proved using Misiurewicz's theorem. In contrast with this, polynomial entropy of maps $[0,1] \to [0,1]$ is not lower semicontinuous. For instance, consider the map in the upper right corner of Figure~\ref{F:gs}. It has Sharkovskii type $1$ and infinite polynomial entropy (because it has an `infinite chain' of essential intervals converging to $1$). However, by an arbitrarily small perturbation we can get a map above the diagonal, except the fixed point $1$. Such a map has zero polynomial entropy.
\end{rem}

\begin{rem}
Recall that Misiurewicz's theorem on the relation between topological entropy and horseshoes was extended, with appropriately defined horseshoes on graphs, to graph maps in\cite{LM93}. It would be interesting to know if the same can be done for polynomial entropy. Therefore we propose the following questions.
\end{rem}

\begin{Q}
Is it still true that the polynomial entropy on graphs is given by one-way horseshoes (composed of compact sets)? Can the notion of a one-way horseshoe composed of closed intervals be generalized to graph maps in such a way as to give the polynomial entropy? 
\end{Q}

Finally we note that on dendrites the polynomial entropy cannot be given by one-way horseshoes, since it can take non-integer values, as we will see in Section~\ref{SS:dend} below.

\subsection{Possible values of polynomial entropy of interval maps}

\begin{thm}\label{T:hpol0}
The polynomial entropy of an interval map is zero if and only if its set of periodic points is connected. (Moreover, such a map is necessarily of Sharkovskii type 1 or 2.)
\end{thm}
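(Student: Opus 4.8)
The plan is to combine the Rigidity theorem (Theorem~\ref{T:rigid}) with a careful analysis of which Sharkovskii types can admit a map with zero polynomial entropy. By Theorem~\ref{T:rigid}, $\hpol(f) = 0$ if and only if no iterate of $f$ has a one-way $1$-horseshoe, i.e.\ there is no iterate $f^n$ and compact set $A$ with $f^n(A) \supseteq A$ and $A$ containing a non-recurrent point. So the statement to prove breaks into two implications plus the parenthetical remark, and the cleanest route is to first pin down the Sharkovskii type.

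\textbf{Step 1: A map with $\hpol(f)=0$ has type $1$ or $2$.} If $f$ had a cycle of period $2^n$ with $n \geq 2$, then (since simple cycles of that period also exist) Proposition~\ref{P:simple-horse} would give an iterate of $f$ with a one-way horseshoe of order $n-1 \geq 1$, forcing $\hpol(f) \geq n-1 \geq 1$. If $f$ had type $2^\infty$ or a cycle of any period that is not a power of $2$, then it has cycles of period $2^n$ for all $n$, again giving arbitrarily large one-way horseshoes. Hence $\hpol(f) = 0$ forces $\Type(f) \in \{1, 2\}$. This already justifies the parenthetical claim.

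\textbf{Step 2: From type $1$ or $2$, reduce to a type $1$ map and apply Corollary~\ref{C:zero}.} Set $g = f^2$; then $g$ has Sharkovskii type $1$, $\Per(f) = \Fix(g)$, and by the power rule $\hpol(f) = \hpol(g)$. By Corollary~\ref{C:zero}, $\hpol(g) = 0$ iff $\Fix(g)$ is connected. Thus, assuming $\Type(f) \in \{1,2\}$, we get $\hpol(f) = 0 \iff \Per(f)$ connected. It remains only to deal with the converse direction \emph{without} first assuming the type: if $\Per(f)$ is connected, I claim $\Type(f) \in \{1,2\}$ automatically, so that the equivalence above applies. Indeed, a connected set of periodic points is an interval (possibly degenerate); a map having a cycle of period $\geq 3$, or cycles of all periods $2^n$, would force $\Per(f)$ to contain points of different periods and in fact to be disconnected — more carefully, one uses that if $\Per(f)$ is a nondegenerate interval $[a,b]$ then every point of $[a,b]$ is periodic, and since $f$ is continuous this is very restrictive; in particular $f$ can have no simple $2^n$-cycle with $n \geq 2$ (its middle interval lies strictly inside $\conv(P)$ and contains a non-periodic point $x$ with $f^{2^n}(x) = \min P$), contradiction. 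The degenerate case ($\Per(f)$ a single point) gives type $1$ trivially since a single periodic point must be fixed. Either way $\Type(f) \in \{1,2\}$ and Step 2 finishes the proof.

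\textbf{Main obstacle.} The delicate point is the converse direction: from ``$\Per(f)$ connected'' one must \emph{deduce} the Sharkovskii type restriction before invoking Corollary~\ref{C:zero}, since that corollary is only stated for type $1$ maps. The argument above via the middle interval of a hypothetical simple $2^n$-cycle ($n \geq 2$) is the crux — one must observe that such a middle interval meets $\conv(P) \setminus \Per(f)$, contradicting that $\Per(f)$ is a nondegenerate interval containing $\conv(P) \cap \Per(f)$ — together with ruling out odd periods $\geq 3$ the same way (they force type $\geq 2^\infty$, hence all $2^n$-cycles, hence the same contradiction). Everything else is a direct citation of Theorem~\ref{T:rigid}, Proposition~\ref{P:simple-horse}, Corollary~\ref{C:zero}, and the power rule.
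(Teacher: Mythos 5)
Your proposal is correct and follows essentially the same route as the paper: Corollary~\ref{C:zero} for type $1$, passage to $g=f^2$ with the power rule for type $2$, and for all other types the observation that a simple $2^n$-cycle ($n\geq 2$) yields, via Proposition~\ref{P:simple-horse}, a one-way horseshoe (hence a non-recurrent, non-periodic point) strictly inside $\conv(P)$, which simultaneously disconnects $\Per(f)$ and forces $\hpol(f)\geq 1$. The only difference is organizational — you isolate the Sharkovskii-type determination as a separate step on each side of the equivalence, whereas the paper shows both sides of the biconditional fail at once for the higher types — but the ingredients and the crux (the non-recurrent point in the middle interval) are identical.
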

\begin{proof}
The result holds for maps of Sharkovskii type 1 by Corollary~\ref{C:zero}.

If $f$ has Sharkovskii type 2, then put $g=f^2$. Then $f$ and $g$ have the same set of periodic points, and for $g$ these points are all fixed, so by Corollary~\ref{C:zero} this set is connected if and only if $\hpol(g)=0$. But $\hpol(f)=\hpol(g)$ by the power rule for polynomial entropy.

Finally, suppose that $f$ has any other Sharkovskii type. By Sharkovskii's theorem, $f$ has a cycle of period 4, and so  it has a simple 4-cycle $P$. By Proposition~\ref{P:simple-horse} there is a one-way horseshoe of order 1 strictly contained in $\conv (P)$. In particular, there is a non-recurrent point there, so the set of periodic points of $f$ is not connected. At the same time, by Theorem~\ref{T:hor-ent} the polynomial entropy of $f$ is not zero.
\end{proof}

\begin{thm}[Possible values of polynomial entropy of interval maps]\label{T:values}\strut
\begin{enumerate}
	\item For an interval map $f$ of Sharkovskii type 1, $\hpol(f) \in \mathbb N_0 \cup \{\infty\}$ and all these values are possible.
	\item For an interval map $f$ of Sharkovskii type $2^n$ where $n\geq 1$, we have $\hpol(f) \in \{n-1, n, n+1, \dots\}\cup \{\infty\}$ and all these values are possible.
	\item For an interval map $f$ of Sharkovskii type $2^{\infty}$ or greater, $\hpol (f) =\infty$.
\end{enumerate}	
\end{thm}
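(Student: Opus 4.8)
The ``admissible set'' halves of the three parts follow quickly from results already established. That the polynomial entropy of \emph{any} interval map lies in $\N_0\cup\{\infty\}$ is Theorem~\ref{T:rigid} (and already Theorem~\ref{T:type1} in the type~$1$ case). For the lower bound in~(2): a map of Sharkovskii type $2^n$ has a $2^n$-cycle, hence (as recalled in Subsection~\ref{SS:simple}) a \emph{simple} $2^n$-cycle, so by Proposition~\ref{P:simple-horse} some iterate has a one-way $(n-1)$-horseshoe and Theorem~\ref{T:hor-ent} gives $\hpol(f)\ge n-1$; together with Theorem~\ref{T:rigid} this yields $\hpol(f)\in\{n-1,n,\dots\}\cup\{\infty\}$. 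For~(3): a map of type $2^\infty$ or greater has a cycle of period $2^n$ for \emph{every} $n$ (by Sharkovskii's theorem, since $2^n\prec 2^\infty$), hence a simple $2^n$-cycle for every $n$, hence one-way $(n-1)$-horseshoes for all $n$, so $\hpol(f)=\infty$. It remains to prove the realizability (``all these values are possible'') statements in~(1) and~(2).

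For~(1), the value $0$ is realized by the identity, whose fixed-point set is connected, so $\hpol=0$ by Corollary~\ref{C:zero}. For $k\in\N$ the plan is to build a ``staircase'' map $f$ of type~$1$ with exactly $k$ essential intervals which form a single $k$-chain; then $\hpol(f)=k$ by Theorem~\ref{T:type1}. Concretely, fix $0=a_0<a_1<\dots<a_k$, keep $[a_k,1]$ pointwise fixed, and on each $(a_{j-1},a_j)$ let $f$ stay strictly above the diagonal (so this is an essential ``up'' interval $I_j$) while arranging, via a small overshoot such as $f(x)=2a_j-x$ just to the left of $a_j$, that $f\big((a_{j-1},a_j)\big)$ crosses the fixed point $a_j$ for $j<k$. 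Keeping $f\ge\mathrm{id}$ everywhere forces Sharkovskii type~$1$ (all iterates move points non-decreasingly); the overshoot makes $\Orb(I_j)$ meet the up interval $I_{j+1}$, so $\Orb(I_j)\supseteq\Orb(I_{j+1})\supseteq I_{j+1}$ by Lemma~\ref{lem:2up}, giving the $k$-chain $I_1,\dots,I_k$, and there is no longer chain since there are only $k$ essential intervals. The value $\infty$ is realized analogously by an infinite staircase $I_j=(a_{j-1},a_j)$, $a_j\nearrow 1$, with overshoots tending to $0$ at $1$ (continuity holds since $f(a_j)=a_j\to1$): this has $k$-chains for all $k$.

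For~(2), the extreme value $m=n-1$ should be realized by a ``minimal'' type-$2^n$ model $\phi_n\colon[0,1]\to[0,1]$ carrying a simple $2^n$-cycle together with exactly its forced lower cycles and nothing else --- e.g.\ the piecewise-linear map joining the points of the simple $2^n$-permutation by line segments, or an inductive two-extension. Proposition~\ref{P:simple-horse} gives $\hpol(\phi_n)\ge n-1$, and one must prove $\hpol(\phi_n)\le n-1$; via the power rule this is $\hpol(\phi_n^{2^n})\le n-1$, and $\phi_n^{2^n}$ is of type~$1$, so by Theorem~\ref{T:type1} it suffices to show $\phi_n^{2^n}$ has no chain of essential intervals of length $>n-1$. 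For the remaining values $m\ge n$ (and $m=\infty$) we combine $\phi_n$ with a type-$1$ ``tail'': on $[0,\tfrac13]$ put a rescaled copy of $\phi_n$, on $[\tfrac13,\tfrac23]$ a monotone ``buffer'' lying strictly below the diagonal except at its unique (repelling) fixed point $\tfrac23$ and flowing points leftwards into $[0,\tfrac13]$, and on $[\tfrac23,1]$ a rescaled type-$1$ map $\psi_m$ with $\hpol(\psi_m)=m$ supplied by part~(1), normalized to fix its left endpoint $\tfrac23$. Then $[0,\tfrac13]$, $[0,\tfrac23]$ and $[\tfrac23,1]$ are invariant, $f$ has Sharkovskii type $\max_\prec(2^n,1)=2^n$, and by the union property $\hpol(f)=\max\big(\hpol(f,[0,\tfrac23]),\,m\big)$. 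Since the buffer contributes a single extra essential interval to the type-$1$ map $f^{2^n}|_{[0,\tfrac23]}$, and by Lemma~\ref{lem:orb-ess} the orbit of that essential interval stays on one side of its source (so, $[0,\tfrac13]$ being invariant, it can only appear at the \emph{start} of a chain), $\hpol(f,[0,\tfrac23])\le n\le m$, whence $\hpol(f)=m$.

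The single genuine difficulty is the upper bound $\hpol(\phi_n)\le n-1$ for the minimal $2^n$-model, i.e.\ the assertion that the self-similar Feigenbaum-type structure contributes exactly $n-1$ and no more. The plan is to describe $\phi_n$ precisely enough that $\phi_n^{2^n}$ is the identity off a finite family of essential intervals organized in a hierarchy of height $n-1$ coming from the nested ``middle intervals'' of the sub-cycles, and then to run a level-counting argument: using Lemma~\ref{lem:orb-ess} (each essential interval's orbit stays on one side of its fixed-point source) together with the invariance of the convex hulls of the sub-cycles, one shows that along any chain of distinct essential intervals the ``level'' strictly increases, bounding the chain length by $n-1$. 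Everything else --- verifying that the staircases are of type~$1$ and compute to the announced values, the Sharkovskii-type and invariance bookkeeping in the gluing of part~(2), and the buffer's single-essential-interval contribution --- is routine given the theorems already available.
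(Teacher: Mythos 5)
Your proposal is correct in architecture, and parts (1), (3) and all the ``admissible set'' halves coincide with the paper's proof (identity/staircases with overshoots for type 1; simple $2^n$-cycles plus Proposition~\ref{P:simple-horse} and Theorem~\ref{T:hor-ent} for the lower bounds; Theorem~\ref{T:rigid} for integrality). Where you genuinely diverge is the realizability in part (2). The paper introduces a period-doubling operator $\Phi$ and proves in Lemma~\ref{L:doubling} that $\Type(\Phi f)=2\Type(f)$ and $\hpol(\Phi f)=\hpol(f)+1$; then $\Phi^{n-1}f_0$ with $f_0(x)=1-x$ realizes $(2^n,\,n-1)$ and $\Phi^{n}g_m$ realizes $(2^n,\,n+m)$, so a single lemma delivers every pair at once, with the upper bound obtained by transporting one-way horseshoes of iterates of $\Phi f$ back to $f$ (the key observations being that at most one horseshoe element can meet the middle third, and that the outer thirds are invariant under even iterates). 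You instead split the work into a ``minimal $2^n$-model'' $\phi_n$ for the extreme value $n-1$ and a gluing with a type-1 tail for $m\geq n$. The gluing is sound (both pieces are invariant, the union property applies, and the buffer's single down essential interval can only prepend to chains since $[0,\tfrac13]$ is invariant), though you should check that the left endpoint of the buffer is actually fixed by $f^{2^n}$ — for the doubled model this holds because $1$ lies on the $2^n$-cycle. What each approach buys: yours avoids proving the full two-sided doubling lemma and makes the chain structure very visible; the paper's avoids the gluing bookkeeping entirely and gets monotonicity of entropy under doubling for free.

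The one step you defer — the upper bound $\hpol(\phi_n)\leq n-1$ via a ``level-counting'' argument on the hierarchy of essential intervals of $\phi_n^{2^n}$ — is exactly where the substance lies, and it is not yet a proof. To execute it you will need precisely the ingredients of the ``conversely'' half of Lemma~\ref{L:doubling}: that the two outer thirds are invariant under the relevant even iterate (so the orbit of a deeper-level essential interval never escapes its half, hence never reaches a shallower level or the other half), and that within each half the structure is a rescaled copy of the $(n-1)$-model, giving the induction. Your sketch is consistent with this (I checked it reproduces the correct chain structure for $n=2,3$), but as written it is a plan rather than an argument; unrolled, it amounts to reproving the paper's doubling lemma level by level.
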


In order to prove this theorem we will use a standard period-doubling construction. Let $L=[0,\frac13]$, $M=[\frac13,\frac23]$, and $R=[\frac23,1]$. Consider a doubling operator $f \mapsto \Phi f$ which acts on interval maps as follows: $(\Phi f)(x)=\tfrac23 + \tfrac13 f(3x)$ for $x\in L$, $(\Phi f)(x)=x-\tfrac23$ for $x\in R$, and $(\Phi f)$ is the linear map on $M$ which connects the values which were already defined at the points $\tfrac13, \tfrac23$, see Figure~\ref{F:double}.
It is easy to check that $(\Phi f)(L) \subseteq R$, $(\Phi f)(R) \subseteq L$, there is a unique fixed point $c$ of $\Phi f$ and it lies in $M$, every other point of $M$ has a trajectory which eventually falls into $L\cup R$, and the restriction of the second iterate $(\Phi f)^2$ to $L$ is conjugate to $f$ by a linear rescaling, as shown in the following commutative diagram,
\begin{equation}\label{cd}
\begin{CD}
L @>(\Phi f)^2>> L\\
@V\times 3VV @VV\times 3V\\
[0,1] @>>f> [0,1]
\end{CD}.
\end{equation}

\begin{figure}[htb!!]
\begin{tikzpicture}
\draw (0,0) rectangle (3,3);
\node [label={above:$f$}] at (1.5,3) {};
\draw [smooth, samples=100,domain=0:3,thick] plot(\x,{0.8*sin(deg(3-3*\x))+0.2*(3-\x)*(3-\x)+0.57});
\begin{scope}[shift={(5,0)}]
\draw (0,0) grid (3,3);
\node [label={above:$\Phi f$}] at (1.5,3) {};
\end{scope}
\begin{scope}[scale=1/3,shift={(15,6)}]
\draw [smooth, samples=100,domain=0:3,thick] plot(\x,{0.8*sin(deg(3-3*\x))+0.2*(3-\x)*(3-\x)+0.57});
\draw [thick] (3,{0.8*sin(deg(-6))+0.57}) -- (6,-6) -- (9,-3);
\end{scope}
\end{tikzpicture}
\caption{The doubling operator $\Phi$ applied to an interval map.}\label{F:double}
\end{figure}
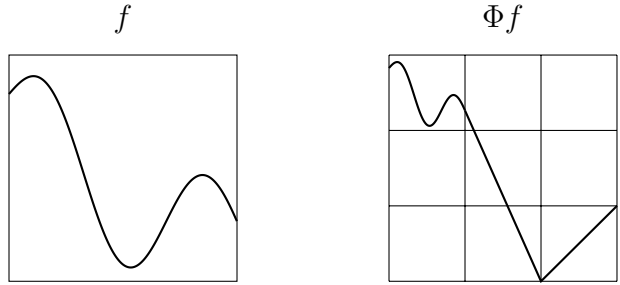

We continue to denote by $\Type(f)$ the Sharkovskii type of an interval map. Then the key properties of the doubling operator $\Phi$ are expressed in the following lemma (where we take $2\cdot 2^\infty = 2^\infty$).

\begin{lem}\label{L:doubling}
For any interval map $f$ we have $\Type(\Phi f)=2 \Type(f)$ and $\hpol(\Phi f) = \hpol(f) + 1$.
\end{lem}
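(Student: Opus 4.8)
The plan is to verify the two assertions separately, starting with the Sharkovskii type since it feeds into the entropy computation via Theorem~\ref{T:type1}.

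\emph{The type formula.} The claim $\Type(\Phi f)=2\,\Type(f)$ should follow from analyzing the cycle structure of $\Phi f$. First I would observe from the diagram~\eqref{cd} that every cycle of $f$ of period $p$ lifts, via the rescaling $x\mapsto x/3$ on $L$ together with its $\Phi f$-image in $R$, to a cycle of $\Phi f$ of period $2p$; hence $\Phi f$ has all periods of the form $2p$ with $p$ a period of $f$, plus the fixed point $c\in M$. Conversely, any periodic point of $\Phi f$ other than $c$ cannot stay in $M$ (since every point of $M\setminus\{c\}$ eventually falls into $L\cup R$ and, once in $L\cup R$, the orbit alternates between $L$ and $R$ because $(\Phi f)(L)\subseteq R$, $(\Phi f)(R)\subseteq L$, so it can never re-enter $M$). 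Thus such an orbit lives entirely in $L\cup R$, visits $L$ and $R$ alternately, so has even period $2k$, and its intersection with $L$ is a period-$k$ cycle of $(\Phi f)^2|_L$, hence (by~\eqref{cd}) a period-$k$ cycle of $f$. So the set of periods of $\Phi f$ is exactly $\{1\}\cup\{2p: p \text{ a period of } f\}$. Taking the Sharkovskii maximum and using that $n\prec m$ iff $2n\prec 2m$ (and the placement of $1$ at the bottom, and $2\cdot2^\infty=2^\infty$), we get $\Type(\Phi f)=2\,\Type(f)$. The one mildly delicate point is the case $\Type(f)=1$, where one must check $\Phi f$ has type exactly $2$ and not $1$ — but $\Phi f$ genuinely has a $2$-cycle (take any point of $L$, e.g.\ the $f$-fixed point rescaled), so this is fine.

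\emph{The entropy formula.} For $\hpol(\Phi f)=\hpol(f)+1$ I would argue via horseshoes using Theorem~\ref{T:rigid}, which says polynomial entropy of an interval map equals the supremum of orders of one-way horseshoes of its iterates (made of closed intervals). For the inequality $\hpol(\Phi f)\ge\hpol(f)+1$: take any one-way $\ell$-horseshoe $A_1,\dots,A_\ell$ for an iterate $f^m$, living in $[0,1]$; rescale it into $L$ to get intervals $A_1',\dots,A_\ell'$, which by~\eqref{cd} form a one-way $\ell$-horseshoe for the corresponding iterate of $\Phi f$; then prepend the interval $M$ — since $(\Phi f)(M)\supseteq M$ (its endpoints map to $R$ and $L$ respectively, so by the intermediate value theorem the image covers a neighborhood containing $M$), and since a suitable iterate of $\Phi f$ maps $M$ onto all of $\conv$ of the relevant set containing the $A_i'$, while $M$ contains a non-recurrent point (any preimage in $M$ of the left endpoint of the $f$-cycle) — the family $M,A_1',\dots,A_\ell'$ is a one-way $(\ell+1)$-horseshoe for an iterate of $\Phi f$. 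Here I should double-check that a common iterate can be found covering all the required pairs; since there are finitely many, taking a large enough power works, exactly as in the proof of Proposition~\ref{P:chain-hor}. Hence $\hpol(\Phi f)\ge\hpol(f)+1$.

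For the reverse inequality $\hpol(\Phi f)\le\hpol(f)+1$, I would use the chains-of-essential-intervals / horseshoe description again. Note $\Phi f$ has type $2\,\Type(f)$; if $f$ has type $\ge 2^\infty$ both sides are $\infty$ and there is nothing to prove, so assume $f$ has type $2^n$ or $1$, i.e.\ type a power of $2$. Pass to $g=(\Phi f)^2$, which has type $\Type(\Phi f)/2=\Type(f)$, hence is conjugate-up-to-iterate to a type-$1$-like situation; more directly, decompose $[0,1]=L\cup M\cup R$ and study one-way horseshoes of iterates of $\Phi f$. Any such horseshoe $B_1,\dots,B_k$ (intervals, last one containing a non-recurrent point) has each $B_i$ eventually entering $L\cup R$ under iteration, and by the alternating structure the essential dynamics is captured by the conjugate copy of $f$ on $L$ together with the single extra ``level'' contributed by $M$; carefully, at most one of the $B_i$ can ``straddle or involve'' the fixed point $c$, and removing it leaves a horseshoe that descends to one for an iterate of $f$ of order $\ge k-1$. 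This gives $S_{\mathrm{int}}(\Phi f)\le S_{\mathrm{int}}(f)+1$, hence by Theorem~\ref{T:rigid} the desired bound.

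\emph{Main obstacle.} The routine part is the type computation and the ``easy direction'' $\hpol(\Phi f)\ge\hpol(f)+1$. The delicate part is the reverse inequality: one must show that the extra horseshoe level beyond the $f$-copy inside $L$ cannot exceed $1$, i.e.\ that the middle block $M$ and the fixed point $c$ contribute at most one to the supremum of horseshoe orders. The cleanest route is probably not via horseshoes at all but via the \emph{chains of essential intervals} of Theorem~\ref{T:type1} applied to the type-$1$ reduction $g=(\Phi f)^{2^{n}}$ (when $\Type f=2^n$) or $g=(\Phi f)^2$ (when $\Type f = 1$): one identifies the essential intervals of $g$ with (rescaled copies of) those of the analogous reduction of $f$, plus possibly one new essential interval coming from $M$, and shows a chain of length $k$ for $g$ yields a chain of length $k-1$ for (the reduction of) $f$. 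Making this correspondence precise — in particular handling how $M$ and its $g$-orbit interact with the copied structure in $L$ — is where the real work lies, and I would expect to spend most of the proof there, modeling the bookkeeping on the arguments in Subsections~\ref{SS:type1} and~\ref{SS:rigid}.
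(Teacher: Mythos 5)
Your overall strategy is the paper's: compute the period set of $\Phi f$ to get the type formula, then prove the two entropy inequalities by transporting one-way horseshoes back and forth through the conjugacy~\eqref{cd} and invoking Theorem~\ref{T:rigid}. The type computation and the lower bound $\hpol(\Phi f)\geq\hpol(f)+1$ are essentially right, with one small slip: if you prepend the whole middle interval $M=[\tfrac13,\tfrac23]$ to the rescaled family $A_i'=\tfrac13 A_i\subseteq L$, pairwise disjointness can fail at the common endpoint $\tfrac13$, since a horseshoe interval of $f$ may well contain the point $1$ (the source-to-midpoint intervals of Proposition~\ref{P:chain-hor} do exactly this when the rightmost fixed point is $1$ and the last essential interval is a down interval). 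The paper avoids this by using a proper subinterval $B_0=[b,c]$ of $M$ ending at the fixed point $c$, for which $(\Phi f)^m(B_0)=[0,c]\supseteq L$ for even $m\geq4$; the fix is trivial but needed.

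The genuine gap is in the reverse inequality $\hpol(\Phi f)\leq\hpol(f)+1$. You assert that ``at most one of the $B_i$ can straddle or involve the fixed point $c$, and removing it leaves a horseshoe that descends to one for an iterate of $f$,'' but this is precisely the statement that has to be proved, and you then explicitly defer ``the real work'' to an unexecuted alternative via chains of essential intervals. The missing argument is short but specific: if some interval $A_i$ of a one-way horseshoe for $(\Phi f)^n$ meets $\Int M$, then $A_1$ must meet $\Int M$ as well (otherwise $A_1\subseteq L\cup R$, which is invariant, so $(\Phi f)^n(A_1)$ could not reach $\Int M$); and any horseshoe interval meeting $\Int M$ must in fact contain $c$, because every point of $M\setminus\{c\}$ moves strictly away from $c$ until it falls into the invariant set $L\cup R$ and never returns, which is incompatible with the self-covering $(\Phi f)^n(A_i)\supseteq A_i$ unless $c\in A_i$. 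Pairwise disjointness then forces $A_1$ to be the only member meeting $\Int M$; the parity of $n$ (forced to be even by $(\Phi f)^n(A_2)\supseteq A_2$ together with the alternation between $L$ and $R$) puts $A_2,\ldots,A_\ell$ all in $L$ after relabelling, and rescaling by $3$ gives an $(\ell-1)$-horseshoe for an iterate of $f$. Without this chain of observations your upper bound is only a plausible claim, and the proposed detour through essential intervals would first require reducing $f$ to type $1$ and then matching essential intervals across the doubling, bookkeeping you acknowledge you have not carried out.
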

\begin{proof}
If we let $P(\cdot)$ denote the set of all periods of all cycles of an interval map, then it is easy to see from the definition of the doubling operator that $P(\Phi f) = 2 P(f) \cup \{1\}$. In light of the Sharkovskii ordering, this immediately implies that $\Type(\Phi f) = 2 \Type(f)$.

To show the effect of $\Phi$ on the polynomial entropy we use Theorem~\ref{T:rigid} and consider one-way horseshoes. Let $c\in M$ be the unique fixed point of $\Phi f$ and let $b\in(\frac13,c)$ be the point with $(\Phi f)^2(b)=\frac13$. Put $B_0=[b,c]$. From the definition of $\Phi$ it is easy to see that 
\begin{equation}\label{B0}
(\Phi f)^m(B_0) = [0,c] \text{ for all even }m\geq 4,
\end{equation}
and clearly $B_0$ contains a non-recurrent point. We have constructed a one-way horseshoe of order 1 without using any properties of $f$. Now suppose that there are pairwise disjoint compact sets $A_1, \ldots, A_\ell$, $\ell\geq1$, forming a one-way horseshoe for some iterate $f^n$. By Lemma~\ref{L:horse-iter} we may assume that $n\geq 2$.
Put $B_1=\frac13 A_1$, \ldots, $B_\ell = \frac13 A_\ell$. These are pairwise disjoint compact sets contained in $L$, and each is disjoint from $B_0$ as well. Since the set $A_{\ell}$ contains a non-recurrent point of $f^n$, by~\eqref{cd} the set $B_{\ell}$ contains a non-recurrent point of $(\Phi f)^{2n}$. Using the inequality $2n\geq 4$, \eqref{cd} and~\eqref{B0} we see that the family $B_0, \ldots, B_\ell$ forms a one-way $(\ell+1)$-horseshoe for $(\Phi f)^{2n}$. In light of Theorem~\ref{T:rigid} this gives the inequality
\begin{equation}\label{Phi-gt}
\hpol(\Phi f) \geq \hpol(f) + 1.
\end{equation}

Conversely, let $A_1, \ldots, A_\ell$ be pairwise disjoint closed intervals forming a one-way horseshoe for some iterate $(\Phi f)^n$, and suppose $\ell\geq 2$. If some $A_i$ meets the interior of $M$, then since $(\Phi f)^n(A_1) \supseteq A_i$ we must have $A_1\cap\Int M\neq\emptyset$ as well, because $L\cup R$ is invariant for $(\Phi f)$. Additionally, since all points in $M$ move ``away'' from the fixed point $c$ toward $L\cup R$, the containment $(\Phi f)^n(A_1) \supseteq A_1$ implies that $c\in A_1$. The same argument shows that $c\in A_i$ for each $A_i$ which meets $\Int M$, so by pairwise disjointness, $A_1$ is the only member of the one-way horseshoe which can meet $\Int M$. In particular, we may assume $A_2 \subseteq L$ (the case $A_2 \subseteq R$ is similar). Now $(\Phi f)^m(A_2)$ is contained in $R$ for odd $m$ and in $L$ for even $m$. Since $(\Phi f)^n(A_2) \supseteq A_2$, $n$ must be even. Then since $(\Phi f)^n(A_2) \supseteq A_3, \ldots, A_\ell$, we see that all of the sets $A_2, \ldots, A_\ell$ are contained in $L$. Let $B_i = 3 A_i$ for $i=2,\ldots,\ell$. These are pairwise disjoint closed intervals in $[0,1]$, and by \eqref{cd} they form a one-way horseshoe for $f^{2n}$. Starting with a one-way $\ell$-horseshoe, $\ell\geq2$, for an iterate of $(\Phi f)$, we constructed a one-way $(\ell-1)$-horseshoe for an iterate of $f$. In light of theorem~\ref{T:rigid} this shows that
\begin{equation}\label{Phi-lt}
\hpol(f) \geq \hpol(\Phi f) - 1.
\end{equation}
The two inequalities~\eqref{Phi-gt},~\eqref{Phi-lt} together show that $\hpol(\Phi f)=\hpol(f)+1$, as desired. \qedhere
\end{proof}

\begin{proof}[Proof of Theorem~\ref{T:values}]
We know by Theorem~\ref{T:rigid} that the polynomial entropy is limited to integer values.

(1) Let $g_0$ be the identity map on $[0,1]$. Clearly $g_0$ has Sharkovskii type 1 and $\hpol(g_0)=0$. Now for $n\in\mathbb{N}$ let $g_n$ be a map with $\Fix(g_n)=\{0,\frac1n,\frac2n,\ldots,1\}$ such that $g(x)>x$ for $x\not\in\Fix(g)$ and
\begin{equation*}
g_n\left(\left[\tfrac{i-1}{n},\tfrac{i}{n}\right]\right)=\left[\tfrac{i-1}{n},\tfrac{\min\{n,i+1\}}{n}\right], \qquad i=1,\ldots,n,
\end{equation*}
see the top row of Figure~\ref{F:gs}. Clearly each $g_n$ has Sharkovskii type 1 and has exactly $n$ essential intervals, all arranged in one long chain. By Theorem~\ref{T:type1} we have $\hpol(g_n)=n$. Finally, let $g_{\infty}$ be an interval map with $\Fix(g_\infty)=\{0,\frac12,\frac23,\frac34,\ldots,1\}$ such that $g(x)>x$ for $x\not\in\Fix(g)$ and 
\begin{equation*}
g_\infty\left(\left[\tfrac{i-1}{i},\tfrac{i}{i+1}\right]\right)=\left[\tfrac{i-1}{i},\tfrac{i+1}{i+2}\right], \qquad i=1,2,\ldots,
\end{equation*}
see the top right map in Figure~\ref{F:gs}. Clearly $g_\infty$ has Sharkovskii type 1 and has arbitrarily long chains of essential intervals. By Theorem~\ref{T:type1} we have $\hpol(g_\infty)=\infty$.

\newcommand\doubler[2]{
	\begin{scope}[scale=2/3]
	\draw (0,0) -- (0,2) (1,0) -- (1,2) (2,1) -- (2,3) (3,1) -- (3,3);
	\draw (0,0) -- (2,0) (0,1) -- (2,1) (1,2) -- (3,2) (1,3) -- (3,3);
	\begin{scope}[scale=1/2,shift={(0,4)}] #1 \end{scope}
	\draw[thick] (1,#2+2) -- (2,0);
	\begin{scope}[scale=1/2,shift={(4,0)}]
	\draw (0,0) rectangle (2,2);
	\draw[thick] (0,0) -- (2,2); \end{scope}
	\end{scope}
}

\newcommand\drawgzero{
\draw[fill=white!90!black] (0,0) rectangle (2,2);
\draw[thick] (0,0) -- (2,2);
}

\newcommand\drawgone{
\draw[fill=white!90!black] (0,0) rectangle (2,2);
\draw (0,0) -- (2,2);
\draw[thick] (0,0) -- (1,2) -- (2,2);
}

\newcommand\drawgtwo{
\draw[fill=white!90!black] (0,0) rectangle (2,2);
\draw (0,1) -- (2,1) (1,0) -- (1,2);
\draw (0,0) -- (2,2);
\draw[thick, line cap=butt] (0,0) -- (0.5,2) -- (1,1) -- (1.5,2) -- (2,2);
}

\newcommand\drawgthree{
\draw[fill=white!90!black] (0,0) rectangle (2,2);
\draw (2/3,0) -- (2/3,2) (4/3,0) -- (4/3,2) (0,2/3) -- (2,2/3) (0,4/3) -- (2,4/3);
\draw (0,0) -- (2,2);
\draw[thick, line cap=butt] (0,0) -- (1/3,4/3) -- (2/3,2/3) -- (3/3,6/3) -- (4/3,4/3) -- (5/3,6/3) -- (6/3,6/3);
}

\newcommand\drawgminusone{
\draw[fill=white!90!black] (0,0) rectangle (2,2);
\draw[thick] (0,2) -- (2,0);
}

\newcommand\drawginfty{
\draw[fill=white!90!black] (0,0) rectangle (2,2);
\foreach \i in {1,...,3} {
	\draw ({2*\i/(\i+1)},0) -- ({2*\i/(\i+1)},2);
	\draw (0,{2*\i/(\i+1)}) -- (2,{2*\i/(\i+1)});
}
\foreach \t in {1.7,1.8,1.9} {
	\draw[fill] (\t,0.25) circle [radius=0.015];
	\draw[fill] (0.25,\t) circle [radius=0.015];
}
\foreach \i in {0,...,100} {
	\draw[thick] ({2*\i/(\i+1)},{2*\i/(\i+1)}) -- ({\i/(\i+1)+(\i+1)/(\i+2)},{2*(\i+2)/(\i+3)}) -- ({2*(\i+1)/(\i+2)},{2*(\i+1)/(\i+2)});
}
\draw (0,0) -- (2,2);
}

\begin{figure}[htb!!]
\begin{tikzpicture}

\drawgzero \node[above] at (1,2.5) {$\hpol=0$}; \node[left] at (-0.5,1) {Type $2^0$};
\begin{scope}[shift={(2.5,0)}]	\drawgone \node[above] at (1,2.5) {$\hpol=1$}; \end{scope}
\begin{scope}[shift={(5,0)}] \drawgtwo \node[above] at (1,2.5) {$\hpol=2$}; \end{scope}
\begin{scope}[shift={(7.5,0)}] \drawgthree \node[above] at (1,2.5) {$\hpol=3$}; \end{scope}
\begin{scope}[shift={(11,0)}] \drawginfty \node[above] at (1,2.5) {$\hpol=\infty$}; \end{scope}
\begin{scope}[shift={(0,-2.5)}] \drawgminusone \node[left] at (-0.5,1) {Type $2^1$}; \end{scope}
\begin{scope}[shift={(2.5,-2.5)}] \doubler{\drawgzero}{1} \end{scope}
\begin{scope}[shift={(5,-2.5)}] \doubler{\drawgone}{1} \end{scope}
\begin{scope}[shift={(7.5,-2.5)}] \doubler{\drawgtwo}{1} \end{scope}
\begin{scope}[shift={(11,-2.5)}] \doubler{\drawginfty}{1} \end{scope}
\begin{scope}[shift={(0,-5)}] \node at (1,1) {$\boldsymbol{\times}$}; \end{scope}
\begin{scope}[shift={(2.5,-5)}] \doubler{\drawgminusone}{0} \node[left] at (-3,1) {Type $2^2$}; \end{scope}
\begin{scope}[shift={(5,-5)}] \doubler{\doubler{\drawgzero}{1}}{1/3} \end{scope}
\begin{scope}[shift={(7.5,-5)}] \doubler{\doubler{\drawgone}{1}}{1/3} \end{scope}
\begin{scope}[shift={(11,-5)}] \doubler{\doubler{\drawginfty}{1}}{1/3} \end{scope}
\begin{scope}[shift={(0,-7.5)}] \node at (1,1) {$\boldsymbol{\times}$}; \end{scope}
\begin{scope}[shift={(2.5,-7.5)}] \node at (1,1) {$\boldsymbol{\times}$}; \end{scope}
\begin{scope}[shift={(5,-7.5)}] \doubler{\doubler{\drawgminusone}{0}}{1/3} \node[left] at (-5.5,1) {Type $2^3$}; \end{scope}
\begin{scope}[shift={(7.5,-7.5)}] \doubler{\doubler{\doubler{\drawgzero}{1}}{1/3}}{1/3} \end{scope}
\begin{scope}[shift={(11,-7.5)}] \doubler{\doubler{\doubler{\drawginfty}{1}}{1/3}}{1/3} \end{scope}
\foreach \y in {1,-1.5,-4,-6.5}{ \node at (10.25,\y) {$\cdots$}; }
\foreach \x in {2.5,5,7.5}{ \draw[thick,->] (\x-0.4,-.1) -- (\x-.1,-0.4);}
\foreach \x in {2.5,5,7.5}{ \draw[thick,->] (\x-0.4,-2.6) -- (\x-.1,-2.9);}
\foreach \x in {5,7.5}{ \draw[thick,->] (\x-0.4,-5.1) -- (\x-.1,-5.4);}
\foreach \y in {0,-2.5,-5} { \draw[thick,->] (12,\y-0.1) -- (12,\y-0.4); }
\end{tikzpicture}

\caption{Possible combinations of polynomial entropy and Sharkovskii type.}\label{F:gs}
\end{figure}

(2) Suppose $f$ has Sharkovskii type $2^n$, $n\geq 1$. If $n=1$ then the inequality $\hpol(f)\geq n-1$ follows for free. If $n\geq2$ then we use the fact that $f$ has a cycle $P$ of period $2^n$, so it has a simple cycle of the same period. Using Proposition~\ref{P:simple-horse} we again have $\hpol(f)\geq n-1$. Since the polynomial entropy is limited to integer values we have shown that $\hpol(f)\in\{n-1,n,n+1,\ldots\}\cup\{\infty\}$, and it remains to show that any value in this set is possible.

The map $f_0(x)=1-x$ has Sharkovskii type 2 and polynomial entropy zero, since its second iterate is the identity, see the first map in row 2 of Figure~\ref{F:gs}. Now we can use the doubling operator (as indicated by the arrows in Figure~\ref{F:gs}) to find maps with the remaining combinations of polynomial entropy and Sharkovskii type. Using Lemma~\ref{L:doubling} we see that for $n\geq1$,
\begin{equation*}
\Type(\Phi^{n-1} f_0) = 2^n \quad \text{and} \quad \hpol(\Phi^{n-1}f_0) = n-1.
\end{equation*}
Using the maps $g_m$ from part (1) of this proof and Lemma~\ref{L:doubling} we see that for $n\geq 1$ and $m\in\Nzero\cup\{\infty\}$,
\begin{equation*}
\Type(\Phi^n g_m) = 2^n \quad \text{and} \quad \hpol(\Phi^n g_m) = n+m.
\end{equation*}
Together this shows that each of the numbers in $\{n-1,n,n+1,\ldots\}\cup\{\infty\}$ can be attained as the polynomial entropy of an interval map of Sharkovskii type $2^n$.

(3) Suppose $f$ has Sharkovskii type $2^\infty$ or higher. Then it has cycles of period $2^n$ for all $n$. In particular, it has simple cycles of period $2^n$ for all $n$, so by Proposition~\ref{P:simple-horse} we have $\hpol(f)=\infty$.
\end{proof}


\subsection{Application to the logistic family}\label{SS:logistic}	

Let $f_\lambda(x)=\lambda x(1-x)$ be the usual logistic family of interval maps $f_\lambda:[0,1]\to[0,1]$ with parameter $\lambda\in[0,4]$. We calculate the polynomial entropy $h(f_\lambda)$ for parameter values within the period-doubling cascade, up to $\lambda_{\infty}$ where the first solenoid appears and the set of periodic points is no longer finite. We freely use known facts about the logistic family, such as the order of appearance and the combinatorial arrangement in $[0,1]$ of the cycles of $f_\lambda$ during the period-doubling cascade. For a general reference, see~\cite{SMR93}, \cite{Str15}, or \cite{MT88}.
Some preliminary cases are quite easy at this point. For $0\leq\lambda\leq1$ the map $f_\lambda$ has Sharkovskii type 1 and a unique fixed point at zero, so by Corollary~\ref{C:zero} the polynomial entropy is zero. For $1<\lambda\leq3$ the map $f_\lambda$ still has Sharkovskii type 1, but there are two distinct fixed points. Therefore there is a unique essential interval and every chain of essential intervals must have length 1. So by Theorem~\ref{T:type1} the polynomial entropy is one.

Now fix $\lambda$ with $3<\lambda<\lambda_{\infty}$. Then there is $n\geq1$ such that $f_\lambda$ has an attracting periodic cycle $P$ of period $2^n$. This cycle is only ``weakly attracting'' if $\lambda$ is a bifurcation parameter (the largest value of $\lambda$ when $f_{\lambda}$ is of Sharkovskii type $2^n$), i.e.\ the product of the derivatives around the cycle has absolute value 1 in this case, but $P$ is still a topological attractor so it does not bother us. We can enumerate $P=\{x_w\}_{w\in\{0,1\}^n}$ in the spatial ordering so that $x_v < x_w$ in the interval if and only if $v<w$ in the lexicographical ordering. For each $m<n$ there is a repelling $2^m$-cycle $Q_m=\{y_w\}_{w\in\{0,1\}^m}$, again labelled in the spatial ordering (for $m=0$ we write simply $Q_0=\{y\}$), as well as an additional repelling fixed point at $0$, for a total of $1+\sum_{m<n} 2^m + 2^n = 2^{n+1}$ periodic points, see Figure~\ref{F:tree}. Passing to the iterate $g=f_{\lambda}^{2^n}$, all of these $2^{n+1}$ periodic points become fixed points, and since $\Per(g)=\Per(f_\lambda)$ we see that $g$ is a map of Sharkovskii type 1. Thus we may apply Theorem~\ref{T:type1}. By the power rule, $\hpol(f_\lambda)=\hpol(g)$, so we may calculate the polynomial entropy of $f_\lambda$ by finding the chains of essential intervals of $g$.	
We label the essential intervals of $g$ as follows, where the underlined words have enough 0's or 1's at the end to reach total length $n$,	
\begin{align*}	
I &= (0, x_{\underline{00\dots0}}) \\	
J_w^0 &= (x_{\underline{w011\dots1}}, y_w), \qquad \qquad w \in \smash{\bigcup_{m=0}^{n-1} \{0,1\}^m,} \\	
J_w^1 &= (y_w, x_{\underline{w100\dots0}})	
\end{align*}
(see Figure~\ref{F:tree} for $n=3$). Then $I$ and each $J_w^1$ are up intervals (since the left endpoint is repelling and the right endpoint attracting), while each $J_w^0$ is a down interval. We need to calculate which of them contain which others in their $g$-orbits. Clearly each contains itself. We will use \emph{arrow notation} $A \longrightarrow B$ to indicate that $\Orb_g(A) \supseteq B$ whenever $A, B$ are distinct essential intervals.	
For $I$ we have the left endpoint fixed at 0 and the right endpoint the smallest element of $P$, so $f_\lambda(I)\supseteq I$. This gives a chain of inclusions	
\begin{equation*}	
\underbracket[.187ex][.35ex]{I \subseteq f_\lambda(I) \subseteq f^2_\lambda(I) \subseteq \dots}_{2^n\text{ terms}} \subseteq f^{2^n}_\lambda(I) = g(I),	
\end{equation*}	
in which the full orbit $P$ appears in the first $2^n$ terms (possibly taking closures). Thus $g(I)$ contains all the essential intervals contained in $\conv(P)$, that is, all the $J_w^i$'s. In particular	
\begin{equation}\label{arrow1}	
I \longrightarrow J^0 \text{ and } I \longrightarrow J^1.	
\end{equation}	
Now fix $0\leq m \leq n-2$ and $w\in\{0,1\}^m$ and consider the up interval $J_w^1$. We work with the map $f_\lambda^k$ where $k=2^{m+1}$. Then the left endpoint $y_w$ is fixed while the right endpoint is the smallest endpoint of a $2^{n-1-m}$-cycle $P_{w1} = \{x_{w1v}\}_{v\in\{0,1\}^{n-1-m}}$. This shows that $f^k_\lambda(J^1_w) \supseteq J^1_w$, so we again get a chain of inclusions	
\begin{equation*}	
\underbracket[.187ex][.35ex]{J^1_w \subseteq f^k_\lambda(J^1_w) \subseteq f^{2k}_\lambda(J^1_w) \subseteq \dots}_{2^{n-1-m}\text{ terms}} \subseteq f^{2^n}_\lambda(J^1_w) = g(J^1_w),	
\end{equation*}	
in which the full orbit $P_{w1}$ appears in the first $2^{n-1-m}$ terms (possibly taking closures). Thus $g(J^1_w)$ contains all the essential intervals in $\conv(P_{w1})$. In particular there are arrows pointing from the up interval $J^1_w$ to $J^0_{w1}$ and to $J^1_{w1}$. Applying the same kind of argument to the down interval $J^0_w$ we conclude in both cases that there are arrows	
\begin{equation}\label{arrow2}	
J^i_w \longrightarrow J^j_{wi}, \qquad i,j\in\{0,1\},\ w\in\{0,1\}^m,\ 0\leq m\leq n-2.	
\end{equation}	
The arrows identified in~\eqref{arrow1} and~\eqref{arrow2} place the essential intervals of $g$ into a binary tree structure in which there are chains of essential intervals of the form
\begin{equation}\label{arrow3}
I \longrightarrow J^{w_0} \longrightarrow J^{w_1}_{w_0} \longrightarrow J^{w_2}_{w_0w_1} \longrightarrow \dots \longrightarrow J^{w_{n-1}}_{w_0 \dots w_{n-2}},	
\end{equation}
for all $w\in\{0,1\}^n$, see Figure~\ref{F:tree}. 

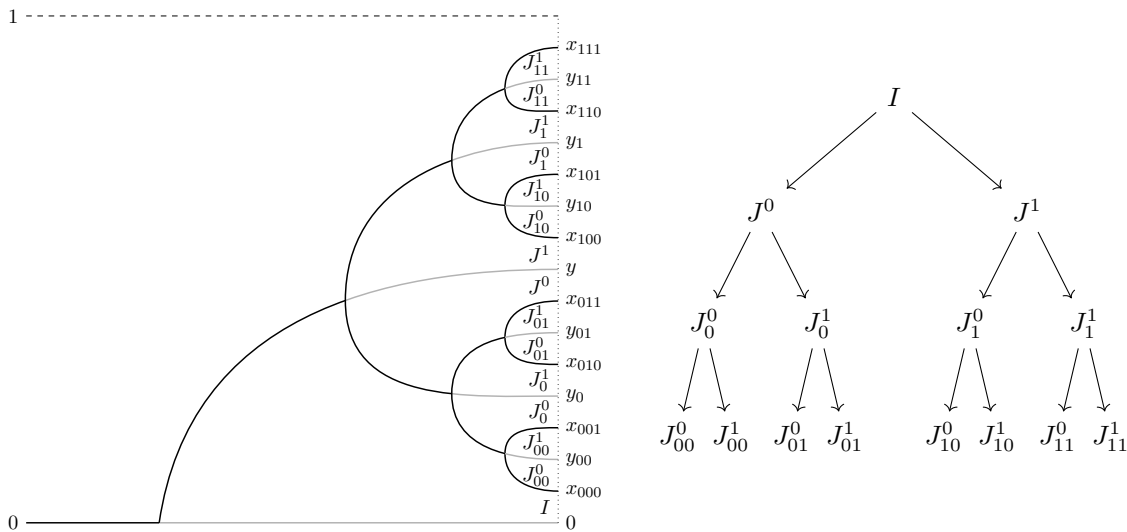
\begin{figure}[htb!!]
\raisebox{-0.5\height}{\scalebox{.7}{
\begin{tikzpicture}[xscale=.5,yscale=.6]
\path [name path=P1] (0,8) to[out=180,in=80] (-15,0);
\path [name path=minus8] (-8,0) -- (-8,16);
\path [name intersections={of=P1 and minus8,by=bif1}];
\path [name path=P2] (0,4) to[out=180,in=270] (bif1) to[out=90,in=180] (0,12);
\path [name path=minus4] (-4,0) -- (-4,16);
\path [name intersections={of=P2 and minus4,by={bif2low, bif2high}}];
\path [name path=P3] (0,2) to[out=180,in=270] (bif2low) to[out=90,in=180] (0,6);
\path [name path=P4] (0,10) to[out=180,in=270] (bif2high) to[out=90,in=180] (0,14);
\path [name path=minus2] (-2,0) -- (-2,16);
\path [name intersections={of=P3 and minus2,by={bif3low, bif3high}}];
\path [name intersections={of=P4 and minus2,by={bif4low, bif4high}}];
\draw [thick] (0,1) to[out=180,in=270] (bif3low) to[out=90,in=180] (0,3);
\draw [thick] (0,5) to[out=180,in=270] (bif3high) to[out=90,in=180] (0,7);
\draw [thick] (0,9) to[out=180,in=270] (bif4low) to[out=90,in=180] (0,11);
\draw [thick] (0,13) to[out=180,in=270] (bif4high) to[out=90,in=180] (0,15);
\draw [thick] (-20,0) -- (-15,0);
\draw [thick, white!70!black] (-15,0) -- (0,0);

\begin{scope}\clip(-8,0) rectangle (0,16);\draw [thick, white!70!black] (0,8) to[out=180,in=80] (-15,0);\end{scope}
\begin{scope}\clip(-16,0) rectangle (-8,16);\draw [thick] (0,8) to[out=180,in=80] (-15,0);\end{scope}
\begin{scope}\clip(-4,0) rectangle (0,16);\draw [thick, white!70!black] (0,4) to[out=180,in=270] (bif1) to[out=90,in=180] (0,12);\end{scope}
\begin{scope}\clip(-16,0) rectangle (-4,16);\draw [thick] (0,4) to[out=180,in=270] (bif1) to[out=90,in=180] (0,12);\end{scope}
\begin{scope}\clip(-2,0) rectangle (0,16);\draw [thick, white!70!black] (0,2) to[out=180,in=270] (bif2low) to[out=90,in=180] (0,6);\end{scope}
\begin{scope}\clip(-16,0) rectangle (-2,16);\draw [thick] (0,2) to[out=180,in=270] (bif2low) to[out=90,in=180] (0,6);\end{scope}
\begin{scope}\clip(-2,0) rectangle (0,16);\draw [thick, white!70!black] (0,10) to[out=180,in=270] (bif2high) to[out=90,in=180] (0,14);\end{scope}
\begin{scope}\clip(-16,0) rectangle (-2,16);\draw [thick] (0,10) to[out=180,in=270] (bif2high) to[out=90,in=180] (0,14);\end{scope}

\draw [dotted] (0,0) -- (0,16);
\draw [dashed] (-20,16) -- (0,16);
\node [left] at (-20,16) {\small $1$};
\node [left] at (-20,0) {\small $0$};
\node [left] at (0,.5) {\small $I$};
\node [left] at (0,1.5) {\small $J^0_{00}$};
\node [left] at (0,2.5) {\small $J^1_{00}$};
\node [left] at (0,3.5) {\small $J^0_0$};
\node [left] at (0,4.5) {\small $J^1_0$};
\node [left] at (0,5.5) {\small $J^0_{01}$};
\node [left] at (0,6.5) {\small $J^1_{01}$};
\node [left] at (0,7.5) {\small $J^0$};
\node [left] at (0,8.5) {\small $J^1$};
\node [left] at (0,9.5) {\small $J^0_{10}$};
\node [left] at (0,10.5) {\small $J^1_{10}$};
\node [left] at (0,11.5) {\small $J^0_1$};
\node [left] at (0,12.5) {\small $J^1_1$};
\node [left] at (0,13.5) {\small $J^0_{11}$};
\node [left] at (0,14.5) {\small $J^1_{11}$};
\node [right] at (0,0) {\small $0$};
\node [right] at (0,1) {\small $x_{000}$};
\node [right] at (0,3) {\small $x_{001}$};
\node [right] at (0,5) {\small $x_{010}$};
\node [right] at (0,7) {\small $x_{011}$};
\node [right] at (0,9) {\small $x_{100}$};
\node [right] at (0,11) {\small $x_{101}$};
\node [right] at (0,13) {\small $x_{110}$};
\node [right] at (0,15) {\small $x_{111}$};
\node [right] at (0,2) {\small $y_{00}$};
\node [right] at (0,6) {\small $y_{01}$};
\node [right] at (0,10) {\small $y_{10}$};
\node [right] at (0,14) {\small $y_{11}$};
\node [right] at (0,4) {\small $y_0$};
\node [right] at (0,12) {\small $y_1$};
\node [right] at (0,8) {\small $y$};
\end{tikzpicture}
}}
\tikzset{
  font={\fontsize{10pt}{12}\selectfont}}
\raisebox{-.5\height}{
\begin{tikzpicture}
[->,level distance=1.5cm,
  level 1/.style={sibling distance=3.5cm},
  level 2/.style={sibling distance=1.5cm},
  level 3/.style={sibling distance=0.7cm}]
  \node {$I$}
    child {node {$J^0$}
      child {node {$J^0_0$}
      	child {node {$J^0_{00}$}}
		child {node {$J^1_{00}$}}
      }
      child {node {$J^1_0$}
      	child {node {$J^0_{01}$}}
		child {node {$J^1_{01}$}}
      }
    }
    child {node {$J^1$}
	  child {node {$J^0_1$}
	  	child {node {$J^0_{10}$}}
		child {node {$J^1_{10}$}}
	  }
      child {node {$J^1_1$}
      	child {node {$J^0_{11}$}}
		child {node {$J^1_{11}$}}
      }
    };
\end{tikzpicture}
}
\caption{Chains of essential intervals for the logistic map $f_\lambda$ when there is an attracting 8-cycle (i.e.\ $n=3$). The bifurcation diagram is distorted, but shows the correct arrangement and order of appearance of the periodic orbits, with attractors in black and repellors in gray. Each period doubling bifurcation produces new essential intervals and adds another level to the binary tree.}
\label{F:tree}	
\end{figure}

We call this tree $T$ and we wish to verify that there are no additional arrows except those that follow from transitivity, i.e.\ that an essential interval's descendants in $T$ are the only essential intervals contained in its $g$-orbit. In the examples in Figures~\ref{F:f2andf4}(\subref{F:f2}) and~\ref{F:f2andf4}(\subref{F:f4}) this can well be seen and we explain why this is always true.

\begin{figure}
\begin{subfigure}[b]{.35\textwidth}
\begin{center}
\begin{tikzpicture}
	[scale=4,
	declare function={f(\t)=3.449*\t*(1-\t);},
	declare function={f2(\t)=f(f(\t));}]
	\draw [thin] (0,0) rectangle (1,1);
	\draw [thin] (0,0)--(1,1);
	\draw [thick] plot [domain=0:1, samples=500, smooth] (\x,{f2(\x)});
	\draw [dashed, thin] (0.440075, -0.02) -- (0.440075, 0.440075);
	\node at (0.22,-0.06) {$I$};
	\draw [dashed, thin] (0.710061, -0.02) -- (0.710061, 0.710061);
	\node at (0.575,-0.13) {$J^0$};
	\draw [dashed, thin] (0.849864, -0.02) -- (0.849864, 0.849864);
	\node at (0.78,-0.06) {$J^1$};
\end{tikzpicture}\\[3em]
\captionsetup{justification=centering}
\caption{$g=f_{\lambda}^2$ if $\Type(f_{\lambda})=2$;\linebreak here $\lambda = 3.449$}
\label{F:f2}
\end{center}
\end{subfigure}
\begin{subfigure}[b]{.55\textwidth}
\begin{center}
\begin{tikzpicture}
	[scale=6,
	declare function={f(\t)=3.544*\t*(1-\t);},
	declare function={f4(\t)=f(f(f(f(\t))));}]
	\draw [thin] (0,0) rectangle (1,1);
	\draw [thin] (0,0)--(1,1);
	\draw [thick] plot [domain=0:1, samples=2000, smooth] (\x,{f4(\x)}); 
	\draw [dashed, thin] (0.363324, -0.02) -- (0.363324, 0.363324);
	\node at (0.18,-0.06) {$I$};
	\draw [dashed, thin] (0.419266, -0.02) -- (0.419266, 0.419266);
	\node at (0.39,-0.13) {$J^0_0$};
	\draw [dashed, thin] (0.523554, -0.02) -- (0.523554, 0.523554);
	\node at (0.47,-0.06) {$J^1_0$};
	\draw [dashed, thin] (0.717833, -0.02) -- (0.717833, 0.717833);
	\node at (0.62,-0.13) {$J^0$};
	\draw [dashed, thin] (0.819797, -0.02) -- (0.819797, 0.819797);
	\node at (0.77,-0.06) {$J^1$};
	\draw [dashed, thin] (0.862901, -0.02) -- (0.862901, 0.862901);
	\node at (0.84,-0.13) {$J^0_1$};
	\draw [dashed, thin] (0.884033, -0.02) -- (0.884033, 0.884033);
	\node at (0.88,-0.06) {$J^1_1$};
\end{tikzpicture} 
\captionsetup{justification=centering}
\caption{$g=f_{\lambda}^4$ if $\Type(f_{\lambda})=4$;\linebreak here $\lambda = 3.544$}
\label{F:f4}
\end{center}
\end{subfigure}
\caption{Essential intervals for iterates of maps in the logistic family.}
\label{F:f2andf4}
\end{figure}
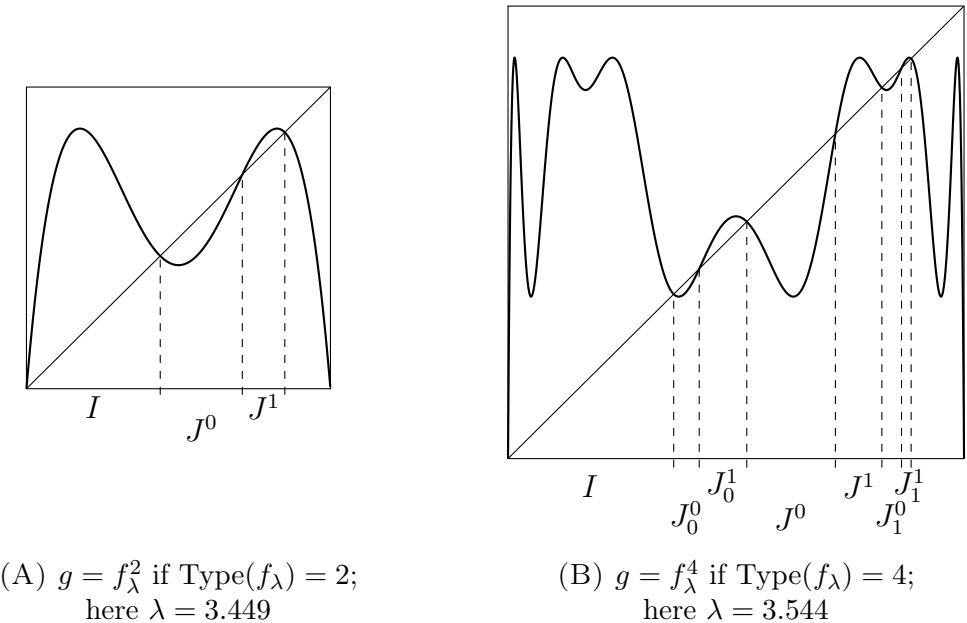

By Remark~\ref{r:no-loops} there are no nontrivial cycles of essential intervals, so no essential interval in our tree $T$ contains any of its ancestors in its $g$-orbit. But if we take an essential down interval $J^0_w$ and look to the left of it in $[0,1]$, the first essential interval we come to which is not one of its descendants in $T$ is one of its ancestors. The same is true when we look to the right of an essential up interval (unless all the essential intervals to the right of it in $[0,1]$ are already its descendants in $T$, as is the case for $I$). Thus by Lemma~\ref{lem:orb-ess} our tree already shows the full $g$-orbit of each essential interval. The chains of essential intervals identified in~\eqref{arrow3} are therefore maximal chains, and they all have length $n+1$. Applying Theorem~\ref{T:type1} we reach the following conclusion:

\begin{thm}\label{T:logistic}
Within the logistic family, $\hpol(f_\lambda) = 0$ when there is an attracting fixed point at zero, and for parameter values $1<\lambda<\lambda_{\infty}$ in the period-doubling cascade, $\hpol(f_\lambda) = n+1$ when there is an attracting $2^n$-cycle.
\end{thm}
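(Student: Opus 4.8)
The plan is to split the parameter range $[0,\lambda_\infty)$ into three regimes and reduce each one to Theorem~\ref{T:type1}. For $0\le\lambda\le1$ the map $f_\lambda$ has Sharkovskii type~$1$ with a single fixed point at $0$; its fixed point set is connected, so Corollary~\ref{C:zero} gives $\hpol(f_\lambda)=0$, which is $n+1$ with $n=0$ (the attracting $1$-cycle being the fixed point $0$). For $1<\lambda\le3$ the map still has type~$1$ but now has two fixed points, hence exactly one essential interval and therefore no chain of essential intervals of length greater than~$1$; Theorem~\ref{T:type1} then yields $\hpol(f_\lambda)=1$, again $n+1$ with $n=0$.

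For the main regime $3<\lambda<\lambda_\infty$, I would fix the $n\ge1$ for which $f_\lambda$ has an attracting $2^n$-cycle $P$ and pass to $g=f_\lambda^{2^n}$. Since $\Per(g)=\Per(f_\lambda)$ consists of the $2^{n+1}$ points ($0$, the repelling cycles $Q_m$ for $m<n$, and $P$), all of which are fixed by $g$, the map $g$ has Sharkovskii type~$1$, and the power rule gives $\hpol(f_\lambda)=\hpol(g)$. So it suffices to compute, via Theorem~\ref{T:type1}, the supremum of the lengths of chains of essential intervals of $g$. I would enumerate the $2^{n+1}-1$ essential intervals of $g$ as $I$ and $J^0_w,J^1_w$ (in the notation of the discussion above), classify each as an up or a down interval from the repelling/attracting nature of its endpoints, and then determine orbit-containments. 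The key observation is that $g(I)=f_\lambda^{2^n}(I)$ already contains the whole cycle $P$, because $I\subseteq f_\lambda(I)\subseteq f_\lambda^2(I)\subseteq\cdots$ and $P$ appears among the first $2^n$ of these sets, and similarly $g(J^i_w)$ contains the subcycle $P_{wi}$. This produces exactly the arrows $I\longrightarrow J^0,J^1$ and $J^i_w\longrightarrow J^j_{wi}$, which organize the essential intervals of $g$ into a binary tree of depth $n$ rooted at $I$, so that following arrows down the tree yields chains of essential intervals of length $n+1$.

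The main obstacle is to show there are no orbit-containments beyond those forced by transitivity in this tree, so that $n+1$ is genuinely the supremum and not merely a lower bound. Here I would invoke Remark~\ref{r:no-loops}, which rules out nontrivial cycles of essential intervals (so no essential interval contains an ancestor in its $g$-orbit), together with Lemma~\ref{lem:orb-ess}, which says that the $g$-orbit of an essential up (resp.\ down) interval is an interval having its source as an endpoint. Scanning away from the source, the first essential interval one meets that is not a descendant in the tree is necessarily an ancestor, and since that ancestor does not lie in the orbit, the orbit must stop there. Hence the tree displays the full $g$-orbit of every essential interval, the chains exhibited in~\eqref{arrow3} are maximal, and they all have length exactly $n+1$. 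Applying Theorem~\ref{T:type1} gives $\hpol(g)=n+1$, hence $\hpol(f_\lambda)=n+1$. One should also record the harmless point that at a bifurcation value of $\lambda$ the $2^n$-cycle is only weakly attracting, but it remains a topological attractor, which is all the argument uses.
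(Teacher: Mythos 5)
Your proposal is correct and follows essentially the same route as the paper: the same three parameter regimes, the same reduction to $g=f_\lambda^{2^n}$ and Theorem~\ref{T:type1}, the same binary tree of essential intervals with arrows $I\longrightarrow J^0,J^1$ and $J^i_w\longrightarrow J^j_{wi}$, and the same appeal to Remark~\ref{r:no-loops} together with Lemma~\ref{lem:orb-ess} to show that the tree exhibits all orbit-containments, so the maximal chains have length exactly $n+1$. The only (cosmetic) slip is in the first regime: for $0\le\lambda\le1$ the value $\hpol(f_\lambda)=0$ falls under the theorem's first clause (attracting fixed point at zero), not under ``$n+1$ with $n=0$'', which would equal $1$.
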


The picture is completed by noting that $\hpol(f_\lambda)=\infty$ for $\lambda\geq\lambda_{\infty}$ (at the Feigenbaum point and beyond) since by Theorem~\ref{T:values} a map of Sharkovskii type $2^\infty$ or greater always has infinite polynomial entropy. Figure~\ref{fig:bif} shows polynomial entropy and Sharkovskii type overlayed on the bifurcation diagram for the logistic family.

\begin{figure}[htb!!]
\strut\hfill
\scalebox{.9}{
\begin{tikzpicture}[yscale=1.05]
\node[inner sep=0pt] (diagram) at (5.57,1.6) {\includegraphics[width=14.33cm]{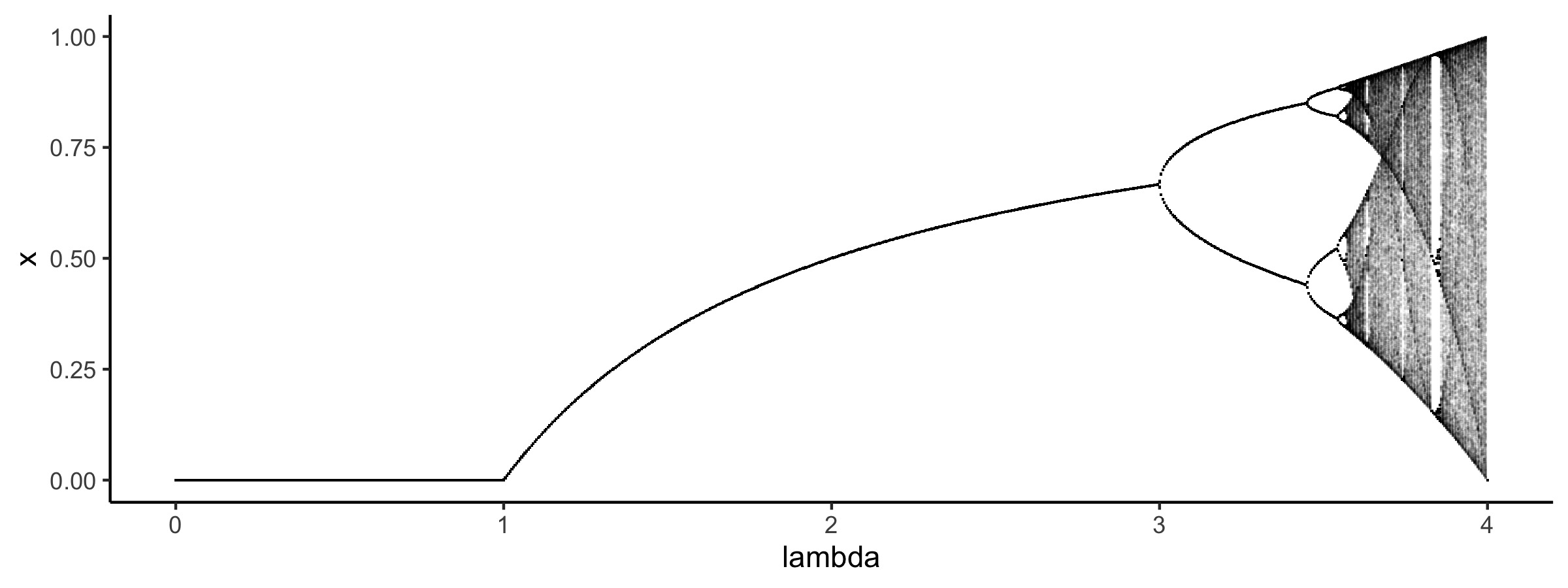}};
\foreach \x in {0,1,3,3.449,3.5440903596,3.5644072661,3.5687594195,3.5696916098} {
	\draw[black,thin,dotted] (3*\x,-.2) -- (3*\x,5.05);
}
\draw[black,thick] (-3.5,5.05) -- (12.5,5.05);
\node[left] at (0,4.80) {\footnotesize Polynomial entropy:};
\draw[thin] (-3.5,4.55) -- (12.5,4.55);
\node[left] at (0,4.30) {\footnotesize Sharkovskii type:};
\draw[black,thick] (-3.5,4.05) -- (12.5,4.05);
\node at (1.5,4.3) {$\scriptstyle 2^0$};
\node at (1.5,4.8) {$\scriptstyle 0$};
\node at (6,4.3) {$\scriptstyle 2^0$};
\node at (6,4.8) {$\scriptstyle 1$};
\node at (9.6735,4.3) {$\scriptstyle 2^1$};
\node at (9.6735,4.8) {$\scriptstyle 2$};
\node at (10.4895,4.3) {$\scriptstyle 2^2$};
\node at (10.4895,4.8) {$\scriptstyle 3$};
\node at (11.4,4.3) {$\scriptstyle \succeq 2^\infty$};
\node at (11.4,4.8) {$\scriptstyle \infty$};
\node at (15,0) {\phantom{.}};
\end{tikzpicture}
}
\hfill\hfill\strut
\vspace{-1em}
\caption{Polynomial entropy in the logistic family.}\label{fig:bif}
\end{figure}

\begin{rem}
There is another important theorem due to Misiurewicz and Szlenk giving the topological entropy of a piecewise monotone interval map $f$ in terms of the exponential growth rate of the so-called \emph{lap numbers} $c_n$ (the number of maximal intervals of monotonicity of $f^n$) \cite{MS80}. It is then natural to ask if the polynomial growth rate of lap numbers gives the polynomial entropy. In~\cite{GC21} the following upper bound was given
\begin{equation}\label{upper}
\hpol(f) \leq 1 + \limsup_{n\to\infty} \frac{\log c_n}{\log n}.
\end{equation}
Within the logistic family, all iterates $f_\lambda^n$ have only $c_n=2$ laps for $\lambda\leq 2$ (up to the first moment when the critical point is periodic), giving $\hpol(f_\lambda)\leq1$ for these parameter values. In fact, a complete calculation is given in~\cite{GC21} showing that $\hpol(f_\lambda)=0$ for $\lambda\leq 1$ and $\hpol(f_\lambda)=1$ for $\lambda \in (1,2]$, thus showing that maps with the same lap numbers can have different polynomial entropies (and the ``plus 1'' term in~\eqref{upper} cannot be omitted).

Milnor and Thurston calculated lap numbers in the logistic family, showing that the lap numbers are $c_n=2n$ (linear growth) for all maps $f_\lambda$ with $\lambda\in(2,4+\sqrt5]$ (up to the next parameter value when the critical point is periodic), then $c_n=n^2-n+2$ (quadratic growth) until the next such parameter value, and so on with the degree increasing by one each time the critical point is periodic until the Feigenbaum point~\cite{MT88}. It is interesting that the parameter values when the critical point is periodic alternate with the period-doubling bifurcations, so by Theorem~\ref{T:logistic} the polynomial entropy is given by the polynomial growth rate of lap numbers plus either zero or one, depending on which of these types of parameter values occurred most recently.

Some open questions were stated at the end of~\cite{GC21}, asking whether the polynomial entropy $\hpol(f_\lambda)$ is a nondecreasing function of $\lambda$ and whether it takes values only in $\mathbb{N}_0\cup\{\infty\}$ within the logistic family. Our work gives affirmative answers to both of those questions. Note that the latter question anticipates our rigidity result, at least in the setting of the logistic family.

It was also asked in~\cite{GC21} whether the polynomial growth rate of lap numbers determines a lower bound for the polynomial entropy, and we feel that this question is important enough to repeat here.
\end{rem}

\begin{Q}
Does the inequality $\hpol(f) \geq \limsup_{n\to\infty} \frac{\log c_n}{\log n}$ hold for piecewise monotone interval maps? 
\end{Q}

\section{Flexibility of Polynomial Entropy}

\subsection{Flexibility for homeomorphisms on continua}\label{SS:HomeoCont}

We are going to prove that for homeomorphisms on continua, polynomial entropy can take arbitrary values in $[0,\infty]$.
We will build on the following fact.

\begin{prop}[{\cite[Proposition 3.5 and Remark 3.6]{ACM17}}]\label{P:denseA}
	There is a dense set $A\subseteq\mathbb{R}^+$ such that for every $a\in A$ there is a compact metric space $X_a$ and a homeomorphism $f_a:X_a\to X_a$ with $\hpol(f_a)=a$.
\end{prop}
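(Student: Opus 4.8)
The plan is to reconstruct the homeomorphism of \cite[Proposition 3.5]{ACM17} sketched above, to compute its polynomial entropy as an explicit function of the rotation angles, and then to use the product rule to fill out a dense subset of $(0,\infty)$. Fix a strictly decreasing sequence $a_1>a_2>\cdots$ of positive reals with $a_n\to0$ and set
\[
X=\mathbb{S}^1\times\bigl(\{0\}\cup\{a_n:n\geq1\}\bigr),\qquad f(x,t)=(x+t,\,t),
\]
equipped with any metric inducing the product topology (say the maximum of arc length on $\mathbb{S}^1$ and $|\cdot|$ on the second factor). Thus $f$ is the identity on the bottom circle $\mathbb{S}^1\times\{0\}$ and the rigid rotation by angle $a_n$ on $\mathbb{S}^1\times\{a_n\}$; since $a_n\to0$ it extends continuously over the bottom circle, and it is plainly a homeomorphism. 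Note that the second factor is totally disconnected, so $X$ is a countable disjoint union of circles together with a limit circle, and in particular is \emph{not} a continuum; this is precisely why the construction yields only the present proposition, while Theorem~\ref{T:flex-C} will require an entirely different space.

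The heart of the matter is a two-sided estimate for $\sep(N,\varepsilon,f)$. Each fibre circle is $f$-invariant and $f$ restricted to it is an isometry, so within a single fibre an $(N,\varepsilon)$-separated set is just an $\varepsilon$-separated subset of $\mathbb{S}^1$, of size $\asymp 1/\varepsilon$ uniformly in $N$: a single rotation has zero polynomial entropy. Complexity is generated only by the \emph{drift between fibres}: the points $(x,a_n)$ and $(y,a_m)$ keep constant second coordinate while their first coordinates separate at speed $|a_n-a_m|$, so over $N$ iterates their orbit arc has length $\asymp N|a_n-a_m|$, and the two points become $(N,\varepsilon)$-distinguishable exactly once $N|a_n-a_m|$ exceeds a fixed multiple of $\varepsilon$. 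Writing $Q_\varepsilon(N)$ for the maximal size of an $(\varepsilon/N)$-separated subset of $\{a_n:n\geq1\}$, one shows --- lower bound: place $\asymp1/\varepsilon$ mutually separated points on each of $\asymp Q_\varepsilon(N)$ fibres chosen $(\varepsilon/N)$-apart and verify separation across fibres via the drift; upper bound: group the fibres into $\asymp Q_\varepsilon(N)$ clusters that are mutually indistinguishable up to time $N$, each contributing $\asymp1/\varepsilon$ --- that
\[
\sep(N,\varepsilon,f)\ \asymp\ \tfrac{1}{\varepsilon}\,Q_\varepsilon(N),
\]
with implied constants depending only on $\varepsilon$.

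For $a_n\asymp n^{-\beta}$ with $\beta>0$ one gets $Q_\varepsilon(N)\asymp(N/\varepsilon)^{1/(1+\beta)}$, so $\sep(N,\varepsilon,f)$ is a constant times $N^{1/(1+\beta)}$; since $\log(c_\varepsilon N^{a})/\log N\to a$, this gives $\hpol^\varepsilon(f)=\tfrac{1}{1+\beta}$ for every $\varepsilon>0$, hence $\hpol(f)=\tfrac{1}{1+\beta}$. Letting $\beta$ range over $(0,\infty)$ realises a dense subset of $(0,1)$ (with more care over the asymptotics of $(a_n)$ one obtains all of $(0,1)$), and applying the product rule $\hpol(g\times h)=\hpol(g)+\hpol(h)$ to finite products of such systems realises a dense subset $A$ of all of $(0,\infty)$. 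The one genuine obstacle is the counting estimate for $\sep(N,\varepsilon,f)$, and in particular the bookkeeping in the upper bound: ``$(N,\varepsilon)$-indistinguishable'' is not an equivalence relation, so one must control how the fibre-clusters overlap. Everything else --- compactness of $X$, the homeomorphism property, and the passage from the $\sep$-asymptotics to $\hpol$ --- is routine.
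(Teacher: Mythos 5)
The paper does not prove this proposition at all: it is imported wholesale from \cite{ACM17}, and the only thing the paper adds is the informal description of the space $\mathbb{S}^1\times(\{0\}\cup\{a_1,a_2,\dots\})$ in Section~\ref{S:term}. Your sketch is therefore not an alternative to an internal argument but a reconstruction of the cited one, and as such it is essentially correct: the construction, the identification of inter-fibre drift as the sole source of complexity, and the reduction to a packing count $Q_\epsilon(N)$ of $\{a_n\}$ at scale $\epsilon/N$ all match the source. Two points deserve tightening. For the upper bound, the cleanest way around the non-transitivity of ``$(N,\epsilon)$-indistinguishable'' that you flag is to bound $\spanning(N,\epsilon,f)$ rather than $\sep(N,\epsilon,f)$: cover $\{0\}\cup\{a_n\}$ by $\asymp Q$ intervals of length $\epsilon/(2N)$, pick one fibre per interval together with an $\epsilon/4$-dense grid of $\asymp 1/\epsilon$ angles on it; the accumulated drift against any point in the same interval is at most $\epsilon/2$ over $N$ steps, so this set $(N,\epsilon)$-spans $X$, and $\hpol^\epsilon$ is computed equally well from spanning numbers (Subsection~\ref{SS:prop}). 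For the lower bound, ``fibres $(\epsilon/N)$-apart'' is slightly too weak: you need total drift at least $2\epsilon$ (so fibres about $3\epsilon/N$ apart) and a per-step drift smaller than the complementary arc, to rule out the difference of first coordinates wrapping around $\mathbb{S}^1$ without ever leaving $[-\epsilon,\epsilon]$; both are harmless since $a_n\to0$ lets you discard the finitely many large fibres and the constants do not affect $\log/\log N$. Finally, note that your computation, if carried out with these uniform constants, gives $\hpol(f)=1/(1+\beta)$ \emph{exactly} for each $\beta>0$, hence every value in $(0,1)$ and, via the product rule, every value in $(0,\infty)$ --- strictly more than the density the proposition asserts. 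The paper does not need this strengthening because it upgrades density to all of $[0,\infty]$ by a different device, the decreasing topological sums of Lemma~\ref{L:top-sum} and Proposition~\ref{P:allA}.
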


Given a sequence of topological dynamical systems $(X_i, f_i)$, $i=1,2,\ldots$, we may make a disjoint topological sum of these systems as follows. We may assume that the spaces $X_i$ are pairwise disjoint, and we may rescale the metrics so that $X_i$ has diameter $1/2^{i-1}$ with respect to its metric $d_{X_i}$. Then put
\begin{equation*}
X=\{\infty\} \sqcup \bigsqcup_{i=1}^\infty X_i
\end{equation*}
with the following metric and dynamics: 
\begin{equation*}
d(x,y) = \begin{cases}
d_{X_i}(x,y) & \text{ if }x,y\in X_i,\\
|\tfrac{1}{2^{i-1}}-\tfrac{1}{2^{j-1}}| & \text{ if }x\in X_i, y\in X_j, i\neq j,\\
\tfrac{1}{2^{i-1}} & \text{ if }x\in X_i, y=\infty,
\end{cases}
\qquad
f(x)=\begin{cases}
f_i(x) & \text{ if }x\in X_i,\\
\infty & \text{ if }x=\infty.
\end{cases}
\end{equation*}
Note that we are forced to add an extra fixed point $\infty$ to make the resulting space compact. Even though polynomial entropy does not have a countable union rule in general, we do get one for this kind of decreasing topological sum.

\begin{lem}\label{L:top-sum}
	In the above topological sum, $\hpol(f)=\sup_i\hpol(f_i)$.
\end{lem}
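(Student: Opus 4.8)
The plan is to prove the two inequalities separately. For the easy direction $\hpol(f)\geq\sup_i\hpol(f_i)$, observe that each $X_i$ is a closed (indeed clopen) $f$-invariant subset of $X$, so $\hpol(f)\geq\hpol(f,X_i)=\hpol(f_i)$, using that $\hpol(f,A_1)\leq\hpol(f,A_2)$ whenever $A_1\subseteq A_2$. Taking the supremum over $i$ gives one inequality. (One should note here that the metric on $X_i$ inside $X$ agrees with $d_{X_i}$, so the identification of $\hpol(f,X_i)$ with $\hpol(f_i)$ is legitimate; the rescaling only changed the metric by a constant factor on each piece, which does not affect polynomial entropy by property (1).)

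For the reverse inequality $\hpol(f)\leq\sup_i\hpol(f_i)$, fix $\varepsilon>0$. The key point is that the sets $X_i$ with $1/2^{i-1}\leq\varepsilon$, together with the fixed point $\infty$, all sit within an $\varepsilon$-ball, so they are ``invisible'' at scale $\varepsilon$. More precisely, choose $N=N(\varepsilon)$ so that $1/2^{N-1}<\varepsilon$. Then any two points lying in $\{\infty\}\sqcup\bigsqcup_{i\geq N}X_i$ are within distance $<\varepsilon$ of each other, hence cannot be $(n,\varepsilon,f)$-separated; equivalently, a single point (say $\infty$) $\varepsilon$-spans this entire tail for all $n$. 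Meanwhile the finitely many pieces $X_1,\ldots,X_{N-1}$ are at mutual distance bounded below by some $\rho(\varepsilon)>0$. Therefore, for $\varepsilon'=\min(\varepsilon,\rho(\varepsilon)/2)$, an $(n,\varepsilon',f)$-spanning set for $X$ can be assembled from $(n,\varepsilon',f_i)$-spanning sets for $X_1,\ldots,X_{N-1}$ plus the one point $\infty$, giving
\begin{equation*}
\spanning(n,\varepsilon',f)\leq 1+\sum_{i=1}^{N-1}\spanning(n,\varepsilon',f_i).
\end{equation*}
Since a finite sum of functions each growing polynomially of degree at most $d$ grows polynomially of degree at most $d$, we get $\hpol^{\varepsilon'}(f)\leq\max_{1\leq i<N}\hpol^{\varepsilon'}(f_i)\leq\sup_i\hpol(f_i)$. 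As $\varepsilon\to0$ we have $\varepsilon'\to0$, so $\hpol(f)=\lim_{\varepsilon'\to0}\hpol^{\varepsilon'}(f)\leq\sup_i\hpol(f_i)$, completing the proof.

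I do not expect any serious obstacle here; the only point requiring a little care is the bookkeeping between the scale $\varepsilon$ at which the tail becomes invisible and the (possibly smaller) scale $\varepsilon'$ needed to separate the finitely many remaining pieces, together with checking that shrinking $\varepsilon'$ along this construction still exhausts a neighborhood of $0$ so that the limit defining $\hpol(f)$ is computed correctly. The contrast with the failure of a countable union rule in general (as illustrated by the earlier example of $\mathbb{S}^1\times(\{0\}\cup\{a_n\})$ from \cite{ACM17}) is precisely that here the diameters of the pieces shrink geometrically, which is what makes the tail collapse at every fixed scale.
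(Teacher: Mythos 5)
Your proof is correct and takes essentially the same route as the paper's, which runs the identical decomposition with separated sets in place of spanning sets: at scale $\varepsilon$ the invariant tail collapses to something $\varepsilon$-controlled by the fixed point $\infty$, and the finitely many remaining invariant pieces contribute additively, giving a bound by $1+\sum_{i<N}$ of the piecewise counting quantities. One remark: the detour through $\rho(\varepsilon)$ and $\varepsilon'$ is unnecessary and is the only fragile spot in your write-up --- spanning sets for subsets combine into a spanning set for their union at the \emph{same} scale with no separation hypothesis needed, whereas shrinking the scale to some $\varepsilon'$ below $1/2^{N-1}$ would destroy the fact that $\infty$ alone spans the tail (forcing you to enlarge $N$, which changes $\rho$, and so on); working directly at scale $\varepsilon$ yields $\spanning(n,\varepsilon,f)\leq 1+\sum_{i=1}^{N-1}\spanning(n,\varepsilon,f_i)$ and the rest of your argument goes through unchanged.
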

\begin{proof}
	Fix $\epsilon>0$ and choose $k$ large enough that $\frac{1}{2^{k}}<\epsilon$. Then the (invariant) sets $X_i$ for $i\geq k+1$ are in the  $\varepsilon$-neighborhood of $\infty$ and so
	\begin{align*}
	\sep(n,\epsilon,f,X) &\leq 1 + \sep(n,\epsilon,f,\bigcup_{i=1}^k X_i) \\
	&\leq 1 + \sum_{i=1}^k \sep(n,\epsilon,f_i,X_i).
	\end{align*}
	Therefore $\hpol^\epsilon(f) \leq \max_{1\leq i\leq k} \hpol^\epsilon(f_i) \leq \max_{1\leq i\leq k} \hpol(f_i)$. Sending $\epsilon$ to zero we get $\hpol(f) \leq \sup_i \hpol(f_i)$. The reverse inequality is trivial, since $(X,f)$ contains each $(X_i,f_i)$ as a subsystem.
\end{proof}

\begin{prop}\label{P:allA}
	For each $a\in[0,\infty]$ there is a homeomorphism $f$ on a compact metric space $X$ such that $\hpol(f)=a$.
\end{prop}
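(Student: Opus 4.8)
The plan is to bootstrap from Proposition~\ref{P:denseA} using the decreasing topological sum of Lemma~\ref{L:top-sum}. Although Proposition~\ref{P:denseA} supplies only a \emph{dense} set $A$ of realizable values, Lemma~\ref{L:top-sum} turns a countable topological sum of systems into the \emph{supremum} of their polynomial entropies, and every point of $(0,\infty]$ is a supremum of a sequence of points of $A$. So the realizable values are forced to be all of $[0,\infty]$, once the trivial value $0$ is supplied by hand.

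First I would dispose of the case $a=0$ separately: the identity on a one-point space is a homeomorphism with $\hpol=0$. This genuinely must be handled on its own, since every nonempty subset of $A\subseteq\mathbb{R}^+$ has strictly positive supremum. Next, fix $a\in(0,\infty]$. Using density of $A$ in $\mathbb{R}^+$ I would choose $a_i\in A$, $i\in\N$, with $\sup_i a_i=a$: when $a<\infty$ pick $a_i\in A\cap(a-\tfrac1i,\,a)$, which is nonempty because every nonempty open subinterval of $(0,\infty)$ meets $A$; when $a=\infty$ pick $a_i\in A\cap(i,\infty)$, which is possible because a set dense in all of $\mathbb{R}^+$ is unbounded. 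Let $(X_{a_i},f_{a_i})$ be the systems furnished by Proposition~\ref{P:denseA}, so each $f_{a_i}$ is a homeomorphism with $\hpol(f_{a_i})=a_i$; after passing to disjoint copies and rescaling the metrics so that $\diam X_{a_i}=2^{-(i-1)}$ (which changes neither the topology nor the polynomial entropy) I would form the topological sum $(X,f)$ with the extra fixed point $\infty$, exactly as in the construction preceding Lemma~\ref{L:top-sum}.

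Then three routine verifications remain. The space $X$ is compact metric: any sequence either has infinitely many terms in a single $X_{a_i}$, where it subconverges by compactness of $X_{a_i}$, or else has terms in infinitely many of the $X_{a_i}$, in which case a subsequence with indices tending to infinity converges to $\infty$. The map $f$ is a homeomorphism: it restricts to the homeomorphism $f_{a_i}$ on each of the clopen, $f$-invariant pieces $X_{a_i}$ and fixes $\infty$, hence is a bijection; it is continuous, being continuous on each $X_{a_i}$ and, at $\infty$, mapping every $\varepsilon$-ball about $\infty$ into itself (such a ball omits only finitely many $X_{a_i}$, each of which $f$ maps into itself); and $f^{-1}$ has the same structure, so it is continuous too. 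Finally Lemma~\ref{L:top-sum} gives $\hpol(f)=\sup_i\hpol(f_{a_i})=\sup_i a_i=a$, which completes the proof.

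The hard part is essentially already behind us: all the real content sits in Proposition~\ref{P:denseA} and Lemma~\ref{L:top-sum}, and the only delicate points in assembling them are the separate treatment of $a=0$ and the (easy) check that the sum map is a homeomorphism rather than merely a continuous surjection. I note that, as an alternative for $a\in[1,\infty]$, one could instead quote Kůrka's Proposition~\ref{P:Kurka}, whose Toeplitz subshifts are minimal homeomorphisms of Cantor sets with any prescribed polynomial entropy in $[1,\infty]$, and then apply the topological-sum argument only to reach the remaining values in $(0,1)$.
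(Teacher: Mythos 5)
Your proposal is correct and follows essentially the same route as the paper: handle $a=0$ via the identity, and for $a\in(0,\infty]$ use density of the set $A$ from Proposition~\ref{P:denseA} to pick homeomorphisms with polynomial entropies whose supremum is $a$, then apply Lemma~\ref{L:top-sum} to their decreasing topological sum. The extra verifications you supply (compactness of the sum, the sum map being a homeomorphism) are details the paper leaves implicit, and your choice of a not-necessarily-monotone sequence with supremum $a$ works just as well as the paper's increasing one.
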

\begin{proof}
	The polynomial entropy of the identity map is 0, and for any $a\in(0,\infty]$ we may find by Proposition~\ref{P:denseA} a sequence of homeomorphisms $(X_i, f_i)$ with $\hpol(f_i) \nearrow a$. Then by Lemma~\ref{L:top-sum} the topological sum has polynomial entropy $a$. But the topological sum of homeomorphisms is clearly again a homeomorphism.
\end{proof}

In the proof of this proposition we have constructed $X$ as a disconnected space (if $a\neq 0$). However, we can prove the result also for continua.

\begin{thm}\label{T:flex-C}
	For each $a\in[0,\infty]$ there is a homeomorphism on a continuum with polynomial entropy $a$.
\end{thm}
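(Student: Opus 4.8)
The plan is to realize the value $a$ on the \emph{cone} over the (possibly disconnected) compact metric space produced by Proposition~\ref{P:allA}, and then to check, using the product rule and the behaviour of polynomial entropy under factors, that passing to the cone changes nothing. For $a=0$ one may simply take a point with the identity map, so assume $a\in(0,\infty]$ and fix, via Proposition~\ref{P:allA}, a homeomorphism $f\colon X\to X$ of a compact metric space with $\hpol(f)=a$. I would form the cone $Y=CX=(X\times[0,1])/(X\times\{0\})$, with quotient map $q\colon X\times[0,1]\to Y$, and observe that $Y$ is a metric continuum: it is compact and Hausdorff (a quotient of a compact space by the closed equivalence relation with the single nontrivial class $X\times\{0\}$), metrizable (a standard fact about cones; one can also write down an explicit cone metric after normalizing $\diam X\le 1$), and connected because every point $q(x,t)$ is joined to the cone point along the path $s\mapsto q(x,st)$. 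On $Y$ I would put the homeomorphism $g$ determined by $g(q(x,t))=q(f(x),t)$; it is well defined since the identifications occur only at level $0$, continuous because $q\circ(f\times\mathrm{id}_{[0,1]})=g\circ q$, and invertible with continuous inverse $q(x,t)\mapsto q(f^{-1}(x),t)$.

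It then remains to pin down $\hpol(g)$ by a two-sided estimate. For the upper bound, the intertwining identity $q\circ(f\times\mathrm{id}_{[0,1]})=g\circ q$ exhibits $(Y,g)$ as a factor of $(X\times[0,1],\,f\times\mathrm{id}_{[0,1]})$, so the factor property together with the product rule give $\hpol(g)\le\hpol(f\times\mathrm{id}_{[0,1]})=\hpol(f)+\hpol(\mathrm{id}_{[0,1]})=a$, using that $\hpol(\mathrm{id}_{[0,1]})=0$ (for the identity, $\sep(n,\varepsilon,\mathrm{id}_{[0,1]})=\sep(1,\varepsilon,\mathrm{id}_{[0,1]})$ for every $n$). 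For the lower bound, $X\times\{1\}$ is a $g$-invariant copy of $X$ on which $g$ is topologically conjugate to $f$, so by conjugacy invariance and the monotonicity $\hpol(g,A_1)\le\hpol(g,A_2)$ for $A_1\subseteq A_2$ we get $\hpol(g)\ge\hpol(g,X\times\{1\})=\hpol(f)=a$. Hence $\hpol(g)=a$; for $a=\infty$ the upper estimate is vacuous and the lower one alone suffices.

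I do not expect a serious obstacle: the construction is soft and the verifications are routine. If anything, the point deserving a little care is that coning must neither raise nor lower the polynomial entropy, and this is exactly why the argument is organized as a sandwich — the factor bound and the subsystem bound meet because the extra ``radial'' direction contributes $\hpol(\mathrm{id})=0$. (One could instead connect the summands of the topological sum of Proposition~\ref{P:allA} by a null sequence of arcs converging to the distinguished fixed point, but the cone is cleaner and avoids bookkeeping about how the map acts on the connecting arcs.)
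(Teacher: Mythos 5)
Your proposal is correct and is essentially the paper's own argument: both take the homeomorphism from Proposition~\ref{P:allA}, pass to the cone over $X$ (the paper collapses $X\times\{1\}$, you collapse $X\times\{0\}$ --- immaterial), and sandwich $\hpol(g)$ between the subsystem lower bound and the factor-of-a-product upper bound $\hpol(f)+\hpol(\mathrm{id})=a$. The only cosmetic differences are that you dispatch $a=0$ with a point while the paper notes that a connected $X$ needs no coning, and that you spell out the metrizability and connectedness checks in slightly more detail.
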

\begin{proof}
	Fix $a\in[0,\infty]$. By Proposition~\ref{P:allA}, there is a homeomorphism $f$ on a compact metric space $X$  with $\hpol(f)=a$. If $X$ is a connected space, then it is already a continuum and there is nothing to prove. Otherwise consider the product system $\widehat{f}=f \times \textrm{id}$ on the product space $X \times [0,1]$. By the product rule, $\hpol(\widehat{f})=\hpol(f)+0=a$. Now form the factor space $Y = X\times[0,1] / (X\times\{1\})$ by gluing $X\times\{1\}$ to a single point. This is a well-known space called the cone over $X$, and the map $\widehat{f}$ factors through the quotient map to a continuous map on $Y$, call it $g$. It is easy to see that $g$ is invertible, hence a homeomorphism, and $Y$ is connected, hence a continuum. Since $(Y,g)$ contains $(X,f)$ as a subsystem we have $\hpol(g)\geq a$, and since $g$ is a factor of $\widehat{f}$ we have $\hpol(g)\leq a$.
\end{proof}



\subsection{Flexibility for dendrite maps}\label{SS:dend}


We already know that polynomial entropy of continuous interval maps, if finite, takes only integer values. On the other hand, there are no restrictions for polynomial entropy of homeomorphisms within the class of all continua. In this section we show that even for continuous dendrite maps the polynomial entropy is very flexible and in particular it can take many non-integer values.

We start with a one-sided subshift $(X, \sigma)$ on 2 symbols, $X \subseteq \{0,1\}^{\Nzero}$ with non-integer polynomial entropy; by Proposition~\ref{P:Cass} we may take $\hpol (\sigma)$ to be any real number in the interval $(1,2)$. We follow a universal construction for embedding subshifts into dynamical systems on dendrites due to \cite{KKM}. 
The main idea is to embed the full shift $\left( \{0,1\}^{\Nzero}, \sigma \right)$ into a dynamical system on the Gehman dendrite $\G$ and then to pass to an appropriate subdendrite depending on the considered subshift $(X, \sigma)$.
Recall that $\G$ is the topologically unique dendrite whose endpoint set is homeomorphic to the Cantor set and whose branching points are all of order 3.
In particular, $\G$ contains a root point $c_{\emptyset}$, branch points $c_x$ for each nonempty finite word $x \in \{0, 1\}^{< \infty}$ and endpoints $e_x$ for each infinite word $x \in  \{0, 1\}^{\Nzero}$. For each nonempty finite word $x \in \{0, 1\}^{< \infty}$ we denote by $B_x$ the arc $[c_{\beta(x)}, c_x]$ where $\beta(x_0\dots x_{n-2}x_{n-1}) = x_0\dots x_{n-2}$ and $\beta(0)=\beta(1)=\emptyset$, see Figure~\ref{G}.
We define a metric $d$ on $\G$ such that each $B_x$ is an isometric copy of an interval of length $2^{-\lvert x\rvert}$, where $\left \lvert x_0...x_{n-1} \right\rvert =n$ denotes the {\it length} of a finite word.
As a dendrite $\G$ is uniquely arcwise connected, so we complete the definition of the metric by letting the distance $d(p,q)$ between $p,q\in \G$ be the length of the unique arc in $\G$ with endpoints $p,q$.

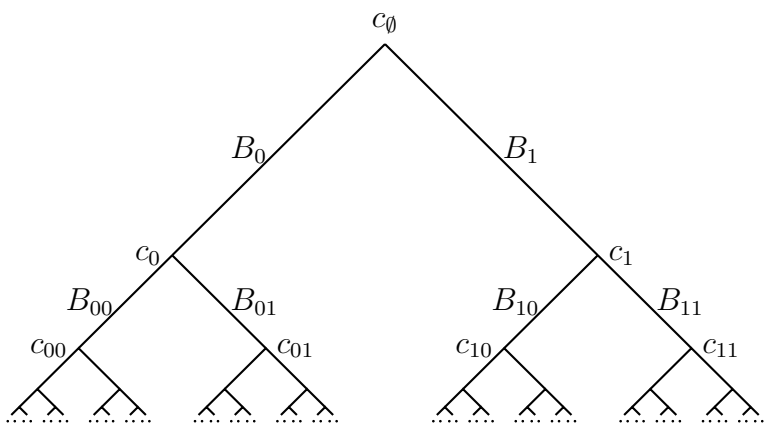
\begin{figure}[htb!!]
	\begin{center}
		\begin{tikzpicture}[
		declare function={
			fx(\x,\i) = (\i==0)*(0.44*\x) + (\i==1)*(10+0.44*(\x-10));
			fy(\y,\i) = 0.44*\y;
		}
		]
		\pgfmathsetmacro{\c}{5};
		\pgfmathsetmacro{\d}{5};
		\pgfmathsetmacro{\a}{fx(\c,0)};
		\pgfmathsetmacro{\b}{fx(\d,0)};
		\pgfmathsetmacro{\e}{fx(\c,1)};
		\pgfmathsetmacro{\f}{fy(\d,1)};
		\draw[thick] (\a,\b) -- (\c,\d) node [midway, left] {$B_0$};
		\draw[thick] (\c,\d) -- (\e,\f) node [midway, right] {$B_1$};
		\path (\c,\d) node [above] {$c_{\emptyset}$};
		\foreach \i in {0,1} {
			\draw[thick] ({fx(\a,\i)},{fy(\b,\i)}) -- ({fx(\c,\i)},{fy(\d,\i)}) node [midway, left] {$B_{\i0}$};
			\draw[thick] ({fx(\c,\i)},{fy(\d,\i)}) -- ({fx(\e,\i)},{fy(\f,\i)}) node [midway, right] {$B_{\i1}$};
			\ifthenelse{\i=0}{\path ({fx(\c,\i)},{fy(\d,\i)}) node [left] {$c_{\i}$}}{\path ({fx(\c,\i)},{fy(\d,\i)}) node [right] {$c_{\i}$}};
		}
		\foreach \i in {0,1} {
			\foreach \j in {0,1} {
				\draw[thick] ({fx(fx(\a,\i),\j)},{fy(fy(\b,\i),\j)}) -- ({fx(fx(\c,\i),\j)},{fy(fy(\d,\i),\j)}) -- ({fx(fx(\e,\i),\j)},{fy(fy(\f,\i),\j)});
				\ifthenelse{\i=0}{\path ({fx(fx(\c,\i),\j)},{fy(fy(\d,\i),\j)}) node [left] {$c_{\j\i}$}}{\path ({fx(fx(\c,\i),\j)},{fy(fy(\d,\i),\j)}) node [right] {$c_{\j\i}$}};
				
		}}
		\foreach \i in {0,1} {
			\foreach \j in {0,1} {
				\foreach \k in {0,1} {
					\draw[thick] ({fx(fx(fx(\a,\i),\j),\k)},{fy(fy(fy(\b,\i),\j),\k)}) -- ({fx(fx(fx(\c,\i),\j),\k)},{fy(fy(fy(\d,\i),\j),\k)}) -- ({fx(fx(fx(\e,\i),\j),\k)},{fy(fy(fy(\f,\i),\j),\k)});
		}}}
		\foreach \i in {0,1} {
			\foreach \j in {0,1} {
				\foreach \k in {0,1} {
					\foreach \l in {0,1} {
						\draw[thick] ({fx(fx(fx(fx(\a,\i),\j),\k),\l)},{fy(fy(fy(fy(\b,\i),\j),\k),\l)}) -- ({fx(fx(fx(fx(\c,\i),\j),\k),\l)},{fy(fy(fy(fy(\d,\i),\j),\k),\l)}) -- ({fx(fx(fx(fx(\e,\i),\j),\k),\l)},{fy(fy(fy(fy(\f,\i),\j),\k),\l)});
		}}}}
		\foreach \i in {0,1} {
			\foreach \j in {0,1} {
				\foreach \k in {0,1} {
					\foreach \l in {0,1} {
						\foreach \m in {0,1} {
							\draw[fill=black] ({fx(fx(fx(fx(fx(\a,\i),\j),\k),\l),\m)},{0}) circle [radius=0.01];
							\draw[fill=black] ({fx(fx(fx(fx(fx(\e,\i),\j),\k),\l),\m)},{0}) circle [radius=0.01];
		}}}}}
		\end{tikzpicture}
	\end{center}
	\caption{Gehman dendrite $\G$}\label{G}
\end{figure}
%
%
%
%
The dynamics of the full shift are realized on the endpoint set of $\G$ by a continuous map $F: \G \to  \G $ for which $F(e_x) = e_{\sigma(x)}$ and
$F(c_x)=c_{\sigma(x)}$, where $\sigma (\emptyset)  = \emptyset$, and extending $F$ linearly on each $B_x$. In particular $F(B_0)=F(B_1)=\{c_{\emptyset}\}$.
Further recall that for each subshift (closed, nonempty, $\sigma$-invariant subset) $X \subseteq  \{0, 1\}^{\Nzero}$, the corresponding subdendrite $\G_X$ formed by taking the union in $\G$ of the arcs joining $c_{\emptyset}$ to the points $e_x, x \in X$, is invariant under $F$.
Note that $c_x \in \G_X$ if and only if the finite word $x$ occurs as a subword in some element of the shift space $X$. The set of all such finite words is denoted $\Lg(x)$ and is called the {\it language} of $X$.
We write $F_X$ for the restriction of $F$ to $\G_X$.\footnote{Recall that the metric usually used for the shift space $X$ is $\rho(x,y)=2^{-\inf\{i~:~x_i\neq y_i\}}$. The restriction of our metric $d$ to the endpoint set of $\G_X$ gives distances twice as large $d(e_x,e_y)=2\rho(x,y)$. This in particular means that the subshift $(X,\sigma|_X)$ and the system given by the restriction of $F_X$ to the endpoint set of $\G_X$ are topologically conjugate and so have the same polynomial entropy (whose value is given by Lemma~\ref{LemWords}). }

Note that $\hpol\left( F_X|_{\textnormal{End}(\G_X)} \right)= \hpol\left( \sigma|_X \right)$ because of the conjugacy.

In our construction of dendrite maps with non-integer polynomial entropy we will use the following interval map.

\begin{lem}\label{L:map g}
	For the surjective map $g\colon [0,1] \to [0,1]$ defined by 	
	\begin{equation*}
	g(x) =  \left \{   \begin{array}{l  l} \vspace{0.8em}
	2x, & \mbox{  if  $\ x \in \left[0, \frac12 \right]$, }\\ 
	1,  & \mbox{  if   $\ x \in \left[ \frac12 , 1 \right] $}  \\ 
	\end{array} \right.
	\end{equation*}
	we have $\hpol(g)=1$.
\end{lem}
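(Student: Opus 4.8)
The plan is to recognize $g$ as a (weakly) monotone interval map and invoke Corollary~\ref{C:monot}. First I would observe that $g$ is continuous and non-decreasing: on $\left[0,\tfrac12\right]$ it is the increasing linear map $x\mapsto 2x$, on $\left[\tfrac12,1\right]$ it is constant equal to $1$, and the two formulas agree at $x=\tfrac12$ (both giving the value $1$). Thus $g$ is monotone in the sense of Corollary~\ref{C:monot}, and by that corollary $\hpol(g)$ equals $0$ or $1$ according to whether the set of periodic points of $g$ is connected or not.

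The remaining task is to compute $\Per(g)$. Since $g$ is non-decreasing, for every $x$ the orbit $x,g(x),g^2(x),\dots$ is a monotone sequence, hence convergent, and its limit is necessarily a fixed point of $g$; in particular $g$ has Sharkovskii type $1$ and $\Per(g)=\Fix(g)$. Solving $g(x)=x$ piecewise: on $\left[0,\tfrac12\right]$ the equation $2x=x$ gives only $x=0$; on $\left[\tfrac12,1\right]$ the equation $1=x$ gives only $x=1$; and $g\!\left(\tfrac12\right)=1\neq\tfrac12$. Hence $\Per(g)=\{0,1\}$, which is not connected.

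Therefore Corollary~\ref{C:monot} yields $\hpol(g)=1$. (Alternatively one can argue directly from Theorem~\ref{T:type1}: $g$ has Sharkovskii type $1$ with $\Fix(g)=\{0,1\}$, so its only essential interval is $(0,1)$, an up interval; since a chain of essential intervals consists of pairwise distinct essential intervals, no chain can have length exceeding $1$, and $\hpol(g)$ equals the supremum of the chain lengths, namely $1$. If one prefers, the lower bound $\hpol(g)\geq1$ can also be seen by exhibiting the one-way $1$-horseshoe $A_1=\left[0,\tfrac12\right]$, which satisfies $g(A_1)=[0,1]\supseteq A_1$ and contains non-recurrent points such as $\tfrac14$.) There is essentially no obstacle in this argument; the only point requiring a word of justification is that $g$ has no periodic points other than the two endpoints $0$ and $1$, and this is immediate from the monotonicity of $g$.
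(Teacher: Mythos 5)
Your proposal is correct and follows exactly the paper's route: the paper's proof of Lemma~\ref{L:map g} is the one-line observation that it follows from Corollary~\ref{C:monot}, and you have simply supplied the (easy) verifications that $g$ is non-decreasing and that $\Per(g)=\Fix(g)=\{0,1\}$ is disconnected. The alternative arguments you sketch via Theorem~\ref{T:type1} and a one-way $1$-horseshoe are also valid but not needed.
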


\begin{proof}
This immediately follows from Corollary~\ref{C:monot}.
\end{proof}

\begin{prop}\label{P:plus1}
	For every subshift $(X, \sigma), X \subseteq  \{0, 1\}^{\Nzero} $ we have 
	$${ \hpol(F_X) = \hpol(\sigma|_X) + 1.}$$
\end{prop}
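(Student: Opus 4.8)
The plan is to prove the two inequalities $\hpol(F_X)\le\hpol(\sigma|_X)+1$ and $\hpol(F_X)\ge\hpol(\sigma|_X)+1$ separately: the upper bound by realising $(\G_X,F_X)$ as a factor of a product system, the lower bound by a direct count of separated branch points. (Note that the two ``easy'' lower bounds available for free — $\hpol(F_X)\ge\hpol(\sigma|_X)$ from the endpoint subsystem, and $\hpol(F_X)\ge\hpol(g)=1$ from the factor $p\mapsto 1-d(c_\emptyset,p)$ onto $([0,1],g)$ — only give $\ge\max$, not the sum, so a genuine count is needed.)

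\textbf{Upper bound.} First I would check how $F$ acts on a single arc $[c_\emptyset,e_\xi]=B_{\xi_0}\cup B_{\xi_0\xi_1}\cup\cdots$: it collapses the first link $B_{\xi_0}$ to $c_\emptyset$ and maps each later link $B_{\xi_0\dots\xi_k}$ ($k\ge 1$) linearly onto $B_{\xi_1\dots\xi_k}$, so $F$ maps $[c_\emptyset,e_\xi]$ onto $[c_\emptyset,e_{\sigma(\xi)}]$. Parametrising $[c_\emptyset,e_\xi]$ by arclength $t=d(c_\emptyset,\cdot)\in[0,1]$ (the arc has total length $\sum_{k\ge 1}2^{-k}=1$), this action reads $t\mapsto\max(2t-1,0)$, which is precisely the map $g$ of Lemma~\ref{L:map g} under the change of variable $x=1-t$. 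I would then define $\phi\colon X\times[0,1]\to\G_X$ by letting $\phi(\xi,x)$ be the point of $[c_\emptyset,e_\xi]$ at distance $1-x$ from $c_\emptyset$. The relation $F_X\circ\phi=\phi\circ(\sigma|_X\times g)$ is exactly the computation above; $\phi$ is continuous because two arcs $[c_\emptyset,e_\xi],[c_\emptyset,e_{\xi'}]$ coincide up to the branch point determined by the common prefix of $\xi$ and $\xi'$, so for fixed $x$ the points $\phi(\xi,x),\phi(\xi',x)$ either coincide or lie in a small ball around that branch point; and $\phi$ is onto because every $w\in\Lg(X)$ is a prefix of some point of $X$ (by shift-invariance of $X$), so every branch point, every interior point of a link, and every endpoint of $\G_X$ lies on some arc $[c_\emptyset,e_\xi]$. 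Thus $(\G_X,F_X)$ is a factor of $(X\times[0,1],\sigma|_X\times g)$, and the factor rule, the product rule and Lemma~\ref{L:map g} yield $\hpol(F_X)\le\hpol(\sigma|_X\times g)=\hpol(\sigma|_X)+\hpol(g)=\hpol(\sigma|_X)+1$.

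\textbf{Lower bound.} I would fix $\epsilon\in(0,\tfrac18)$ and, for each $n$, set $\Delta_n=\{c_w:w\in\Lg(X),\ 1\le|w|\le n\}$. Since $F_X(c_w)=c_{\sigma(w)}$ simply drops the first letter, the orbit of $c_w$ reaches $c_\emptyset$ after $|w|\le n$ steps; a short case analysis on two distinct words $u,v$ of length at most $n$ (either $u$ is a prefix of $v$, or $u,v$ first differ at some position $p<\min(|u|,|v|)\le n$) shows that the orbits of $c_u$ and $c_v$ are pulled more than $\tfrac14$ apart at some time strictly less than $n$, so $\Delta_n$ is $(n,\epsilon)$-separated for $F_X$. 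Hence $\sep(n,\epsilon,F_X)\ge\#\Delta_n=\sum_{j=1}^n\omega(j)$, where $\omega$ is the complexity function of $X$. Every word of $\Lg(X)$ extends to the right, so $\omega$ is nondecreasing; combining $\sum_{j=1}^n\omega(j)\le n\,\omega(n)$ with $\sum_{j=1}^{2n}\omega(j)\ge\sum_{j=n}^{2n}\omega(j)\ge n\,\omega(n)$ gives
\[
\limsup_{n\to\infty}\frac{\log\sum_{j=1}^n\omega(j)}{\log n}=1+\limsup_{n\to\infty}\frac{\log\omega(n)}{\log n}=1+\hpol(\sigma|_X),
\]
the last equality by Lemma~\ref{LemWords}. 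Therefore $\hpol(F_X)\ge\hpol^\epsilon(F_X)\ge 1+\hpol(\sigma|_X)$, which together with the upper bound proves the proposition.

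\textbf{Main obstacle.} The delicate part is the upper bound: verifying that $\phi$ is an honest factor map, i.e.\ carrying out the arclength computation that identifies the action of $F$ on $[c_\emptyset,e_\xi]$ with $t\mapsto\max(2t-1,0)$, and checking continuity of $\phi$ at the points where two arcs merge; once this is done, the factor and product rules finish the argument with no further work. In the lower bound the only care needed is the bookkeeping of which pair of branch points separates at which iterate (and the trivial passage from the growth rate of $\omega$ to that of its partial sums), both of which are routine.
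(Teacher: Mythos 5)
Your proposal is correct and follows essentially the same route as the paper: the upper bound via exhibiting $(\G_X,F_X)$ as a factor of $(X\times[0,1],\sigma|_X\times g)$ (your $\phi$ is exactly the paper's $\pi_2$, just described via arclength from $c_\emptyset$), and the lower bound by showing the branch points $c_w$, $|w|\le n$, form an $(n,\epsilon)$-separated set of cardinality $\sum_{j\le n}\omega(j)$ and extracting the extra $+1$ from the partial sums of the complexity function. The minor differences (excluding $c_\emptyset$ from the separated set, taking $\epsilon<\tfrac18$ instead of $\epsilon<\tfrac12$, and the explicit identification of the arc dynamics with $t\mapsto\max(2t-1,0)$) are cosmetic.
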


\begin{rem}
	Contrary to polynomial entropy, the topological entropy does not change when we extend any subshift $X$ to the corresponding dendrite map, i.e.\ $\htop(F_X)=\htop(\sigma_X)$.
\end{rem}

\begin{proof}
	In addition to $F_X: \G_X \to \G_X$ we consider the interval map $g: [0,1]\to [0,1]$ from Lemma~\ref{L:map g} and we show that $g$ is a factor of $F_X$. We consider also a product map $P: X \times  [0,1]\to X \times  [0,1]$ which is an extension of $F_X$. 
	
	Let $\pi_1: \G_X \to [0,1]$ be defined by 
	\begin{equation*}
	\begin{array}{l  l} \vspace{0.8em}  
	\pi_1(e_x)  = 0 & \mbox{for all } x \in X,\\
	\pi_1(c_x) = 2^{-|x|} & \mbox{for all } x \in \Lg(X).\\
	\end{array}
	\end{equation*}
	For every arc $B_x, x \in \Lg(X) \setminus \{\emptyset\} $, we already know the $\pi$-images of the endpoints, and we extend $\pi$ to the rest of the arc in a linear fashion. 
	Note that $\pi_1: \G_X \to [0,1]$ is a continuous surjection.
	Observe that the diagram
	\begin{equation*}
	\begin{tikzcd}[row sep=1cm, column sep=1.5cm]
	\G_X \arrow[r, "F_X"] \arrow[d, shift left=-1, "\pi_1" swap] & 
	\G_X \arrow[d, shift left=-1, "\pi_1"] \\
	\left[0,1\right] \arrow[r, "g" swap] & \left[0,1\right] \end{tikzcd}
	\end{equation*}
	commutes at the endpoints of $\G_X$ and at the endpoints of all arcs $B_x$. 
	Since $\pi_1, F_X$ are linear on $B_x$ and $g$ is linear on $\pi_1(B_x)$ (either constant or with slope 2), we see that the diagram commutes everywhere on $G_X$, so $g$ is a factor of $F_X$.
	
	Now we define the product map $P: X\times [0,1] \to X\times [0,1]$, by $P=\sigma|_{X} \times g$, i.e.,
	$$P(x, y) = \left(\sigma(x), g(y) \right).$$ 
	This is continuous.
	Let $\pi_2: X\times [0,1] \to \G_X$ be defined for 
	$x=x_0 x_1 \cdots \in X$ as follows
	\begin{equation*}
	\begin{array}{l  l} \vspace{0.5em}  
	\pi_2(x,0)  = e_x & \\ \vspace{0.5em}  
	\pi_2(x,1) =c_{\emptyset } & \\
	\pi_2(x, 2^{-n}) = c_{x_0 \cdots x_{n-1}}.\\
	\end{array}
	\end{equation*}
	
	Then for every arc $\{x\} \times \left[ 2^{-n}, 2^{-n+1} \right]$ the $\pi_2$-images of its endpoints are the endpoints of the arc $B_{x_0 x_1\cdots x_{n-1}}$ and we may extend $\pi_2$ linearly. 
	
	Note that $\pi_2: X\times [0,1] \to \G_X$ is a continuous surjection, and similarly as before the following diagram commutes
	\begin{equation*}
	\begin{tikzcd}[row sep=1cm, column sep=1.5cm]
	X\times \left[0,1\right]  \arrow[r, "P"] \arrow[d, shift left=-1, "\pi_2" swap] & 
	X\times \left[0,1\right]   \arrow[d, shift left=-1, "\pi_2"] \\
	\G_X \arrow[r, "F_X", swap] & 
	\G_X
	\end{tikzcd}
	\end{equation*}
	so that $P$ is an extension of $F_X$.
	
	Since $\hpol(g)=1$ by Lemma~\ref{L:map g}, now we are ready to give an upper bound for $\hpol(F_X)$. Since $F_X$ is a factor of $P$ and $P= \sigma|_X \times g$ is a product, we know from Subsection~\ref{SS:prop} that 
	$$ \hpol(F_X) \leq \hpol(P) = \hpol(\sigma|_X) +\hpol(g) = \hpol(\sigma|_X) +1.$$ 
	
	Now we turn our attention to estimating $\hpol(F_X)$ from below.
	We construct $(n, \epsilon, F_X)$-separated sets which surprisingly (at least at first sight) do not contain any endpoints of $\G_X$ but are still large enough for our purposes.
	We claim that for $n \in \Nzero$ and $\epsilon < \frac12$, the set $S=\left\{c_x : x \in \Lg(X), \lvert x\rvert \leq n \right\}$ is $(n, \epsilon, F_X)$-separated.
	From the definition of the metric $d$, it is clear that $c_{\emptyset}$ is further than $\epsilon$ from any other point in $S$. 
	Therefore if $c_x, c_y \in S$ have indices with different lengths $i= \lvert x \rvert < j = \lvert y \rvert  \leq n$, then after $i$ iterates $d\left(F^i_X(c_x),  F^i_X(c_y) \right) = d\left(c_{\emptyset}, c_{\sigma^i(y)} \right) \geq 1/2>\epsilon.$
	Finally, if distinct points $c_x, c_y \in S$ have indices  of the same length $k \leq n$, then we write $x=x_0 \cdots x_{k-1}, y= y_0 \cdots y_{k-1}$. 
	Then there exists $i < k$ such that $x_i \neq  y_i$.
	Thus $\sigma^i(x), \sigma^i(y)$ start with different symbols and $F^i_X(c_x)=c_{\sigma^i(x)}, F^i_X(c_y)=c_{\sigma^i(y)}$. 
	Since one of $\sigma^i(x), \sigma^i(y)$ starts with the symbol $0$ and the other starts with the symbol $1$, we get $d\left(F^i_X(c_x),  F^i_X(c_y) \right) \ge d(c_0, c_1) = 1 > \epsilon.$
	Thus we have shown that $\sep(n, \epsilon, F_X) \ge \#S$ and $\#S$ is simply $\omega(0) + \omega(1) + \cdots + \omega(n),$ where $\omega(i)$ counts the number of words of length $i$ in the language of $X$.
	To simplify the calculations we consider this inequality only for even numbers:
	\begin{equation}\label{sep}
	\sep(2n, \epsilon, F_X) \ge \omega(0) + \cdots + \omega(n)+ \omega(n+1)+\cdots+ \omega(2n), \quad \epsilon < \dfrac12, n\in\Nzero.
	\end{equation}
	Since the complexity function of a subshift is non-decreasing, we get $\sep(2n, \epsilon, F_X) \ge n \omega(n)$. 
	This suffices to give the desired lower bound.
	Indeed for every $\epsilon < \frac12$ we have
	\begin{equation*}
	\begin{aligned}
	\hpol^{\varepsilon}(F_X)  & = \limsup_{n\to \infty}\dfrac{\log \sep(n, \epsilon, F_X)}{\log n}  \ge \limsup_{n\to \infty}\dfrac{\log \sep(2n, \epsilon, F_X)}{\log(2n)} \\
	& \ge \limsup_{n\to \infty}\dfrac{\log(n \omega(n))}{\log(2n)}  \\
	& = \limsup_{n\to \infty} \left ( \dfrac{\log n}{\log 2+\log n} + \dfrac{\log \omega(n)}{\log 2+\log n} \right ) \\
	& =1+\hpol(\sigma|_X),
	\end{aligned}
	\end{equation*}
	where we have used Lemma~\ref{LemWords} to evaluate the limes superior.
	Now sending $\epsilon \to 0$ we get 
	\begin{equation*}
	\hpol(F_X) \ge 1+\hpol(\sigma|X).\qedhere
	\end{equation*}
\end{proof}

\begin{thm}\label{T:flex-D}
	For every real number $\alpha \in (2,3)$ there is a dendrite $D$ and a continuous surjective map $f\colon D\to D$ with $\hpol(f)=\alpha$. 
\end{thm}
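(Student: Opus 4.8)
The plan is to reduce the statement to the two flexibility ingredients already assembled above: Cassaigne's subshifts (Proposition~\ref{P:Cass}) and the ``plus one'' behaviour of polynomial entropy under the Gehman-dendrite extension (Proposition~\ref{P:plus1}). Fix $\alpha\in(2,3)$. Since $\alpha-1\in(1,2)$, Proposition~\ref{P:Cass} furnishes a surjective subshift $X\subseteq\{0,1\}^{\Nzero}$ with $\hpol(\sigma|_X)=\alpha-1$. I would then set $D=\G_X$ and $f=F_X$, the restriction to $\G_X$ of the standard realization $F\colon\G\to\G$ of the full $2$-shift. By Proposition~\ref{P:plus1}, $\hpol(f)=\hpol(\sigma|_X)+1=\alpha$ with no further work, so everything reduces to checking that $(D,f)$ is a legitimate example, i.e.\ that $D$ is a nondegenerate dendrite and that $f$ is a continuous surjection of $D$ onto itself.

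That $D$ is a dendrite is quick: $\G_X$ is the union of the arcs $[c_{\emptyset},e_x]$ over $x\in X$, and since $X$ is compact this union is a subcontinuum of the dendrite $\G$; every subcontinuum of a dendrite is again a dendrite. It is nondegenerate because $\hpol(\sigma|_X)>1$ forces $X$ to be infinite, so $D$ contains two distinct endpoints $e_x\neq e_{x'}$. Continuity of $f$ is immediate, as $F$ is continuous and $\G_X$ is $F$-invariant. The only point that really requires an argument is surjectivity of $f$, and this is exactly where the surjectivity of $\sigma|_X$ is used. The image $f(D)$ is a subcontinuum of $D$; it contains $c_{\emptyset}=F(c_0)$, and for each $y\in X$ we may, by surjectivity of $\sigma|_X$, write $y=\sigma(x)$ with $x\in X$, so that $e_y=F(e_x)\in f(D)$. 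A subcontinuum of the dendrite $D$ containing $c_{\emptyset}$ and $e_y$ must contain the arc $[c_{\emptyset},e_y]$; since $D$ is precisely the union of all such arcs, $f(D)=D$.

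I do not expect a genuine obstacle here: the real content lives in Propositions~\ref{P:Cass} and~\ref{P:plus1}, which are already available. The mildly delicate step is the surjectivity verification; an alternative, more hands-on route is to check it arc by arc, noting that each $B_y$ with $\lvert y\rvert=m\geq 1$ is the linear image under $F$ of $B_{ay}$ whenever the left-extended word $ay$ lies in $\Lg(X)$, and such an $a\in\{0,1\}$ exists for every $y\in\Lg(X)$ precisely because $\sigma|_X$ is onto. One should also make sure the ``subcontinuum of a dendrite is a dendrite'' fact and the nondegeneracy are cited cleanly, but these are standard.
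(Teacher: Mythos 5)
Your proposal is correct and follows the paper's proof exactly: take a surjective Cassaigne subshift with $\hpol(\sigma|_X)=\alpha-1$ via Proposition~\ref{P:Cass}, set $D=\G_X$, $f=F_X$, and apply Proposition~\ref{P:plus1}. The extra verifications you supply (that $\G_X$ is a nondegenerate dendrite and the arc-by-arc surjectivity check) are sound elaborations of what the paper dispatches with ``obviously surjective.''
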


\begin{proof}
	Fix $\alpha \in (2,3)$. Then $\alpha -1 \in (1,2)$ and by Proposition~\ref{P:Cass} there is a one-sided surjective subshift $(X,\sigma)$ on 2 symbols with $\hpol(\sigma) = \alpha -1$. By Proposition~\ref{P:plus1}, the subdendrite $D=\G_X$ of the Gehman dendrite and the corresponding map $f= F_X$ on $D$ are such that $\hpol(f)=\alpha$. Moreover, since the subshift is surjective, also $f$ is obviously surjective.
\end{proof}

\begin{rem}\label{R:dend}
More can be said. Using Proposition~\ref{P:Kurka}, rather than Proposition~\ref{P:Cass}, and using a Gehmann-like dendrite with branch points of higher order, rather than the Gehman dendrite, we can get dendrite maps with polynomial entropy taking arbitrary values in $[2,\infty]$. The values $0, 1$ can also be attained trivially (since the interval is a dendrite).  However, even then the question is left open whether polynomial entropy can take non-integer values less than 2 for continuous maps on dendrites. Note that~\cite{HL19} left open the same question for homeomorphisms on $\mathbb{S}^2$.
\end{rem}

\begin{Q}
Is there a dendrite $X$ and a continuous map $f:X\to X$ such that $\hpol(f)\in(0,1)\cup(1,2)$? Is every positive real number the polynomial entropy of a dendrite map?
\end{Q}

\section*{Acknowledgments} \noindent 
The first and second authors were supported by the Czech Republic RVO funding for I\v{C}47813059.
The third author was supported by the Slovak Research and Development Agency under contract No.~APVV-15-0439 and by VEGA grant 1/0158/20.
Much of this paper was written during the stay of the first and second authors in Bansk\'{a} Bystrica; they are grateful for the hospitality of Matej Bel University. The first author's stay was supported by the Silesian University in Opava and the European Union through project CZ.02.2.69/0.0/0.0/18\_053/0017871.


\end{document}